 \def\a{\alpha}
 \def\be{\beta}
 \def\C{{\mathbb C}}
 \def\de{\delta}
 \def\De{\Delta}
 \def\e{\varepsilon}
 \def\deta{{\dot{\eta}}}
 \def\ga{\gamma}
 \def\dga{{\dot{\gamma}}}
 \def\tga{{\tilde{\gamma}}}
 \def\Ga{\Gamma}
 \def\vr{\varphi}
 \def\vrt{\vartheta}
 \def\la{\lambda}
 \def\La{\Lambda}
 \def\bLa{{\mathbf{\Lambda}}}
 \def\si{\sigma}
 \def\Si{\Sigma}
 \def\om{\omega}
 \def\Om{\Omega}
 \def\tt{\theta}
 \def\otau{{\ov\tau}}
 \def\re{{\mathbb R}}
 \def\na{{\mathbb N}}
 \def\then{\Longrightarrow}
 \def\ov{\overline}
 \def\Z{{\mathbb Z}}
\def\Á{\textexclamdown}
 \def\A{{\mathbb A}}
 \def\cA{{\mathcal A}}
 \def\B{{\mathbb B}}
 \def\cB{{\mathcal B}}
 \def\cC{{\mathcal C}}
 \def\D{{\mathbb D}}
 \def\cE{{\mathcal E}}
 \def\E{{\mathbb E}}
 \def\cH{{\mathcal H}}
 \def\I{{\mathbb I}}
 \def\K{{\mathbb K}}
 \def\L{{\mathbb L}}
 \def\cM{{\mathcal M}}
 \def\mN{{\widetilde{\mathcal N}}}
 \def\cO{{\mathcal O}}
 \def\bO{{\mathbb O}}
 \def\cP{{\mathcal P}}
 \def\cS{{\mathcal S}}
 \def\cT{{\mathcal T}}
 \def\fT{{\mathfrak T}}
 \def\cU{{\mathcal U}}
 \def\ou{{\ov{u}}}
 \def\cV{{\mathcal V}}
 \def\cW{{\mathcal W}}
 \def\ow{{\ov{w}}}
 \def\tq{{\tilde{q}}}
 \def\tq1{{\tilde{q}_1}}
 \def\ox{{\ov{x}}}
 \def\X{{\mathbb X}}
 \def\fX{{\mathfrak X}}
 \def\oy{{\ov{y}}}
 \def\oz{{\ov{z}}}
 \def\0{{\mathbf 0}}
 \def \lv{\left\vert}
 \def \rv{\right\vert}
 \def \lV{\left\Vert}
 \def \rV{\right\Vert}
 \def \ov{\overline}
 \def \then{\Longrightarrow}
 \definecolor{dgreen}{rgb}{0,0.3,0}
 \definecolor{dred}{rgb}{0.8,0,0}
 \def\ee{\text{\rm\large  e}}
 \DeclareMathOperator*{\tsum}{{\textstyle \sum}}
 \DeclareMathOperator{\supp}{supp}
 \DeclareMathOperator{\diam}{diam}
 \DeclareMathOperator{\intt}{int}
 \DeclareMathOperator{\Lip}{Lip}
  \renewcommand{\proofname}{{\bf Proof:}}
 \theoremstyle{plain}
 \newtheorem{MainThm}{Theorem}
 \newtheorem{MainProp}[MainThm]{Proposition}
 \newtheoremstyle{Cl}
  {5pt}
  {3pt}
  {\sl}
  {}
  {\it}
  {:}
  {.5em}
  {}
 \newtheoremstyle{St}
  {5pt}
  {3pt}
  {\sl}
  {}
  {\bf}
  {.}
 {\newline}
  {}
 \def\begincproof{
                  \renewcommand{\proofname}{\it Proof:}
                  \begin{proof}
                 }
 \def\endcproof{
                \renewcommand{\qedsymbol}{$\diamondsuit$}
                \end{proof}
                \renewcommand{\qedsymbol}{\openbox}
                \renewcommand{\proofname}{\bf Proof:}
               }
 \newtheorem{Thm}{Theorem}[section]
 \newtheorem{Lemma}[Thm]{\bf Lemma}
 \newtheorem{Corollary}[Thm]{\bf Corollary}
 \newtheorem{Theorem}[Thm]{\bf Theorem}
 \newtheorem{Proposition}[Thm]{\bf Proposition}
 \theoremstyle{Cl}
 \newtheorem{Claim}{Claim} [Thm]
 \theoremstyle{St}
 \theoremstyle{definition}
 \newtheorem{Definition}[Thm]{\bf Definition}
 \theoremstyle{remark}
 \newtheorem{Remark}[Thm]{\bf Remark}
 \newtheorem{mysec}[Thm]{}
\providecommand\@dotsep{5}
 \renewcommand{\proofname}{{\bf Proof:}}
 \title[zero entropy.]
 {Generic hyperbolic Ma\~n\'e sets have zero entropy.}
\subjclass[2020]{37J51, 37D20}
\keywords{Entropy, Aubry-Mather theory, Lagrangian Systems.}
 \author[G. Contreras]{Gonzalo Contreras}
\address{CIMAT \\
          A.P. 402, 36.000 \\
          Guanajuato. GTO \\
          M\'exico.}
\email{gonzalo@cimat.mx}
\thanks{Partially supported by CONACYT, Mexico, grant 
A1-S-10145.}
\begin{document}

\makeatother

\parskip +5pt

\begin{abstract}\quad

We prove that in the $C^k$ topology generic hyperbolic Ma\~n\'e sets have zero topological entropy.
\end{abstract}

\maketitle

\tableofcontents


Let $M$ be a closed riemannian manifold.
A Tonelli Lagrangian is a $C^2$ function
\linebreak
 $L:TM\to\re$ that  is 
\begin{enumerate}[(i)]
\openup 5pt
\item {\it Convex:} $\exists a>0$\;
$\forall (x,v), (x,w)\in TM$, \;
$w\cdot \partial^2_{vv} L(x,v)\cdot w \ge a |w|_x^2$.
\end{enumerate}
\pagebreak
The uniform convexity assumption and the compactness of $M$ imply that $L$ is
\begin{enumerate}[(i)]
\setcounter{enumi}{1}
\item {\it Superlinear:}
$\forall A>0$ $\exists B>0$ such that
$\forall (x,v)\in TM$: $L(x,v)> A\,|v|_x-B$.
\end{enumerate}

Given $k\in\re$, the Ma\~n\'e {\it action potential} is defined as
$\Phi_k:M\times M\to\re\cup\{-\infty\}$, 
\begin{equation}\label{actionpotential}
\Phi_k(x,y):=\inf_{\ga\in \cC(x,y)}
\int k+L(\ga,\dga),
\end{equation}
where 
\begin{equation}\label{defcxy}
\cC(x,y):=\{\ga:[0,T]\to M\; {\text{absolutely continuous }}\vert\;
T>0,\; \ga(0)=x,\;\ga(T)=y\;\}.
\end{equation}
The  Ma\~n\'e {\it critical value} is
\begin{equation}\label{defcL}
c(L) := \sup\{\, k\in \re \;|\; \exists\, x,y\in M :\;\Phi_k(x,y)=-\infty\;\}.
\end{equation}
See  \cite{CILib} for several characterizations of $c(L)$.

A curve $\ga:\re\to M$ is {\it semi-static} if 
$$
\forall s<t  \qquad
\int_s^t c(L)+L(\ga,\dga) = \Phi_{c(L)}(\ga(s),\ga(t)).
$$
Also $\ga:\re\to M$ is {\it static} if
$$
\forall s<t  \qquad
\int_s^t c(L)+L(\ga,\dga) = -\Phi_{c(L)}(\ga(t),\ga(s)).
$$
The {\it Ma\~n\'e set} of $L$ is
$$
\mN(L):=\{(\ga(t),\dga(t))\in TM \;|\; t\in\re,\; \ga:\re\to M\text{ is semi-static }\},
$$
and the {\it Aubry set } is
$$
\cA(L):=\{(\ga(t),\dga(t))\in TM \;|\; t\in\re,\; \ga:\re\to M\text{ is static }\}.
$$

The Euler-Lagrange equation
$$
\tfrac d{dt}\, \partial_vL = \partial_x L
$$
defines the Lagrangian flow $\vr_t$ on $TM$.
The {\it energy function} $E:TM\to\re$,
$$
E(x,v):= \partial_v L(x,v)\cdot v -L(x,v),
$$
is invariant under the Lagrangian flow.
The Ma\~n\'e set $\mN(L)$ is invariant under the Lagrangian flow and
it is contained in the energy level $\cE:=[E=c(L)]$ (see e.g. Ma\~n\'e \cite[p.~146]{Ma7} or \cite{CILib}).

Let $\cM_{\text{inv}}(L)$ be the set of Borel probabilities in $TM$ which are invariant under the Lagrangian flow.
Define the {\it action } functional   $A_L:\cM_{\text{inv}}(L)\to\re\cup\{+\infty\}$ as 
$$
A_L(\mu):=\int L\,d\mu.
$$
The set of {\it minimizing measures} is
$$
\cM_{\min}(L):=\arg\min_{\cM_{\text{inv}}(L)} A_L,
$$
and the {\it Mather set } $\cM(L)$ is the union of the support of minimizing measures:
$$
\cM(L):=\bigcup_{\mu\in\cM_{\min}(L)} \supp(\mu).
$$
Ma\~n\'e proves (cf. Ma\~n\'e~\cite[Thm. IV]{Ma7} also \cite[p. 165]{CDI}) that an invariant measure is minimizing if and only if it is supported in the Aubry set. Therefore we get the set of inclusions
\begin{equation}\label{mane}
\cM\subseteq\cA\subseteq\mN\subseteq\cE.
\end{equation}

\begin{Definition}\label{defhipL}\quad

We say that $\mN(L)$ is {\it hyperbolic} if there are 
sub-bundles\footnote{The continuity of the bundles $E^s$, $E^u$ is a consequence of Definition~\ref{defhipL} 
and it is not required in \ref{defhipL}, see
Proposition 5.1.4 in \cite{FH}.}
$E^s$, $E^u$ of $T\cE|_{\mN(L)}$ and $T_0>0$ such that
\begin{enumerate}[(i)]
\item $T\cE|_{\mN(L)} = E^s \oplus\langle \frac d{dt}\vr_t\rangle \oplus E^u$.
\item $\lV D\vr_{T_0}|_{E^s}\rV < 1$, $\lV D\vr_{-T_0}|_{E^u}\rV <1$.
\item $\forall t\in\re$ \quad $(D\vr_t)^*(E^s)=E^s$, $(D\vr_t)^*(E^u)=E^u$.
\end{enumerate}
\end{Definition}

 Hyperbolicity for {\sl autonomous}
lagrangian or hamiltonian flows is  always understood as hyperbolicity
for the flow restricted to the energy level.

Fix a Tonelli Lagrangian $L_0$. Let
$$
\cH^k(L_0):=\{\,\phi\in C^k(M,\re)\;|\; \mN(L_0+\phi) \text{ is hyperbolic }\},
$$
endowed with the $C^k$ topology.
By \cite[Lemma 5.2, p. 661]{CP} the map $\phi\mapsto \mN(L_0+\phi)$ is upper
semi-continuous. Therefore $\cH^k(L_0)$ is an open set for any $k\ge 2$.

For possible applications we want to remark that all the perturbing 
potentials in this paper are locally of the
form 
\begin{equation}\label{canal}
\phi(x) = \e\, d(x,\pi(\Ga))^k, 
\quad k\ge 2,
\end{equation}
 where $\Ga$ is a 
suitably chosen periodic orbit of the Lagrangian flow nearby the Ma\~n\'e set.
Here we prove

  \begin{MainThm}\label{Tzeroentropy}
If $L_0$ is a Tonelli lagrangian and $k\ge 2$, 
the set
$$
\cE_0(L_0) =\{\,\phi\in\cH^k(L_0)\;|\;
\mN(L_0+\phi)\text{ has zero topological entropy }\}
$$
contains a residual subset of $\cH^k(L_0)$.
\end{MainThm}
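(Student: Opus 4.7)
The plan is a Baire-category argument: I would express $\cE_0(L_0)$ as a countable intersection of open dense subsets of the open set $\cH^k(L_0)$. For each $n\in\na$, let $U_n\subset\cH^k(L_0)$ consist of those $\phi$ for which $\mN(L_0+\phi)$ is covered by $(1/n)$-neighborhoods of finitely many hyperbolic periodic orbits $\Gamma_1,\ldots,\Gamma_r$ of the flow of $L_0+\phi$, all of average action $c(L_0+\phi)$, and with no transverse heteroclinic cycle among the $\Gamma_j$'s. Because the Mañé set is assumed hyperbolic, the horseshoe theorem for hyperbolic sets of flows ensures that for $\phi\in\bigcap_n U_n$ the topological entropy of $\vr_1|_{\mN(L_0+\phi)}$ is zero, so the residuality of $\bigcap_n U_n$ suffices. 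Openness of $U_n$ rests on the upper semi-continuity of the Mañé set map \cite[Lem.~5.2]{CP}, persistence of the hyperbolic periodic orbits $\Gamma_j$ and their local invariant manifolds under $C^k$ perturbations, and continuity of transverse intersections among them.

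For density, given $\phi\in\cH^k(L_0)$ and $\epsilon>0$, I would first apply the Anosov closing lemma on the hyperbolic compact set $\mN(L_0+\phi)$ to produce periodic orbits $\Gamma_1,\ldots,\Gamma_r$ of action $c(L_0+\phi)$ whose $(1/n)$-tubes cover (the non-wandering part of) $\mN(L_0+\phi)$. Then perturb by
\[
\psi(x)\;:=\;\epsilon\sum_{j=1}^{r}\chi_j(x)\,d\bigl(x,\pi(\Gamma_j)\bigr)^{k},
\]
with bump cut-offs $\chi_j$ supported in small balls around $\pi(\Gamma_j)$, exactly as in \eqref{canal}. Since $\psi$ vanishes along every $\Gamma_j$, those orbits persist with unchanged action for the flow of $L_0+\phi+\psi$, while any orbit spending a definite fraction of its time macroscopically outside $\bigcup_j\pi(\Gamma_j)$ accrues strictly positive extra action. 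Hyperbolic shadowing within a neighborhood of the old Mañé set then forces $\mN(L_0+\phi+\psi)\subset\bigcup_j B_{1/n}(\Gamma_j)$. A secondary arbitrarily small perturbation, generic in the bump parameters, destroys any surviving transverse heteroclinic cycles among the $\Gamma_j$'s.

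The principal obstacle is controlling the $C^k$-amplitude of the perturbation as $n\to\infty$: a finer cover potentially requires more orbits $\Gamma_j$, so the number $r$ of bumps, their widths, and the amplitude $\epsilon$ must be balanced simultaneously. The uniform strict convexity of $L$ (giving a quadratic-in-displacement lower bound on the action penalty) together with uniform constants of the hyperbolic splitting on $\mN(L_0+\phi)$ (bounding from below the action penalty per unit time of drift away from the tubes) should make this balance work, provided the bumps are chosen at scales compatible with the local stable/unstable manifolds. A secondary difficulty is the acyclicity condition: the heteroclinic intersections between $\Gamma_i$ and $\Gamma_j$ depend continuously and transversely on $\psi$, so a generic choice of bump parameters breaks every such cycle, securing membership in $U_n$ and closing the density step.
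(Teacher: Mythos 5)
Your decomposition $\cE_0 = \bigcap_n U_n$ with $U_n$ a "tubes around finitely many periodic orbits with no transverse heteroclinic cycle" condition has two genuine gaps, and they do not merely need more detail: the scheme would fail.

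First, the acyclicity step. You propose that a secondary, small, generic-in-bump-parameters perturbation "destroys any surviving transverse heteroclinic cycles among the $\Gamma_j$'s." This is exactly backwards: a \emph{transverse} intersection $W^u(\Gamma_i)\pitchfork W^s(\Gamma_j)$ is persistent under $C^1$-small perturbations of the vector field — persistence is what transversality buys. So if the pre-perturbation Ma\~n\'e set already carries a transverse heteroclinic cycle (which is the generic situation for a hyperbolic set with positive entropy), no perturbation kept small in any $C^k$ norm will break it while the $\Gamma_j$'s remain present. For the same reason $U_n$ is not open: \emph{absence} of a transverse intersection is not an open condition, since a tangential intersection can become transverse under an arbitrarily small perturbation. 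So both the openness and the density part of your $U_n$-scheme fail as stated.

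Second, the containment $\mN(L_0+\phi+\psi)\subset\bigcup_j B_{1/n}(\Gamma_j)$ is asserted via "hyperbolic shadowing" but is not proved, and this is in fact the technical heart of the result. The canal perturbation $\psi$ certainly penalizes orbits macroscopically outside the tubes, but any attempt to make this quantitative runs into a size conflict: forcing the Ma\~n\'e set into $(1/n)$-tubes around \emph{many} periodic orbits of a priori uncontrolled periods requires balancing the canal height, canal width, the number $r$ of orbits and their periods simultaneously, and there is no reason the required $\lV\psi\rV_{C^k}$ goes to $0$ as $n\to\infty$. The paper avoids this by a sharper construction: using the Markov partition and a Bressaud--Quas counting argument (Lemma~\ref{Lper}, Proposition~\ref{Pper}, Corollary~\ref{cmn}), it produces a \emph{single} periodic orbit $\Gamma_n$ which is exponentially close to $\cA(L)$ (distance $o(\be^{km_n})$) while having period with subexponential growth ($\log T_n/m_n\to 0$). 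A single canal of height $\asymp\be^{(k+1)m_n}$ then has $C^k$-norm tending to $0$, while Mather's crossing lemma (Lemma~\ref{CL}) gives a quadratic, hence dominant, action penalty for any minimizing measure with mass outside a $\be^{m_n}$-tube of $\Gamma_n$ (Claim~\ref{c2}). Finally, instead of aiming at an acyclicity condition, the paper only shows entropy $<\gamma$ is open and dense, using a cover–counting estimate (Claim~\ref{c1}, Lemma~\ref{A.1}) which bounds $h_\mu$ when $\mu$-most of the mass lies in a thin tube around a short orbit; zero entropy is obtained only as a countable intersection. These three ingredients — the symbolic shadowing orbit with the exponential/subexponential trade-off, the quantitative crossing estimate, and the replacement of acyclicity by the covering-number entropy bound — are precisely what is missing from your sketch and what would need to be supplied to make any version of it work.
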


On surfaces Theorem~\ref{Tzeroentropy} follows from
the graph property of the Aubry set, namely

\begin{MainProp}
If $L$ is a Tonelli Lagrangian, $k\ge 2$ and $\dim M=2$, \\
then
$\mN(L)$ has zero topological entropy.
\end{MainProp}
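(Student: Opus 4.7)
The plan is to exploit the graph property of the Aubry set together with $\dim M=2$ to show that the Lagrangian flow on $\mN(L)$ is conjugate (or essentially conjugate) to a continuous flow on a $1$-dimensional lamination in the surface $M$, for which zero topological entropy is forced by Poincar\'e--Bendixson-type considerations.

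First I would invoke Mather's graph theorem: $\pi|_{\cA(L)}:\cA(L)\to M$ is injective with Lipschitz inverse, so $\cA(L)$ is homeomorphic to a compact subset of $M$, a closed surface. Then I would establish the key non-crossing principle for semi-static curves: if two semi-static curves $\ga_1,\ga_2$ have base-point projections that meet transversally at some $x\in M$, one can switch branches at $x$ to form a broken admissible curve and then smooth the corner, strictly lowering the $c(L)$-action; this contradicts the defining identity $\int_s^t c(L)+L(\ga,\dga)=\Phi_{c(L)}(\ga(s),\ga(t))$ for semi-static curves. Consequently the family $\Lambda:=\{\pi\circ\ga:\ga\text{ semi-static}\}$ is a family of pairwise non-crossing curves on the surface $M$, i.e.\ a $C^0$ one-dimensional lamination.

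Next I would observe that the same non-crossing principle, combined with the graph property on $\cA$, implies $\pi|_{\mN(L)}$ is injective: two distinct semi-static orbits sharing a base point would project to transversally crossing curves at that point, which is forbidden. Hence $\pi$ gives a topological embedding of $\mN(L)$ into $M$ that intertwines the Lagrangian flow $\vr_t$ with a continuous flow along the leaves of $\Lambda\subset M$. Since a continuous flow on a compact subset of a $2$-manifold has zero topological entropy, by the classical argument that local first-return maps on transversal arcs are homeomorphisms of $1$-manifolds and therefore cannot produce exponential orbit separation, we conclude
$$
h_{\mathrm{top}}\bigl(\vr_t|_{\mN(L)}\bigr)=h_{\mathrm{top}}\bigl(\vr_t|_{\Lambda}\bigr)=0.
$$

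I expect the main obstacle to be the non-crossing principle in step two. Verifying that the corner-rounding trick indeed yields a strict action decrease requires care with the Tonelli convexity and with the Erdmann corner condition/weak KAM machinery, and one must also globalize from the two-curve statement to the claim that $\Lambda$ is a genuine lamination (closedness, local product structure). A secondary subtlety is handling heteroclinic orbits in $\mN(L)\setminus\cA(L)$ whose projections may accumulate on $\pi(\cA(L))$ tangentially; here Mather's graph property controls the possible tangent directions and allows the argument to go through without loss.
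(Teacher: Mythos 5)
Your overall strategy---use $\dim M=2$ together with the graph property to reduce to a continuous flow on a one-dimensional lamination in the surface, and invoke zero entropy for such flows---is the right skeleton, and part of it does coincide with the paper's proof. But there is a genuine gap where you conclude that $\pi|_{\mN(L)}$ is injective, and this is exactly the step the paper is careful to avoid.

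The crossing/switching argument you describe forbids only \emph{transversal} crossings of the base projections of two semi-static curves. It does not forbid two distinct semi-static orbits $\ga_1,\ga_2$ from passing through the same base point $x$ with velocities pointing in \emph{opposite} directions. Indeed, both velocities lie on the circle $E_L^{-1}\{c(L)\}\cap T_xM$, and, since the energy is strictly increasing along each ray in $T_xM$, a direction determines the speed; hence two distinct critical-energy velocities at $x$ must point in different directions on the surface, which may be antipodal, giving a tangential (non-transversal) contact of the projected curves. Corner-rounding does not produce a strict action decrease at a tangency, so no contradiction is obtained. Heteroclinic orbits in $\mN(L)\setminus\cA(L)$ connecting distinct static classes routinely produce precisely this kind of coincidence; this is why Ma\~n\'e's graph property (his Theorem VI(b)) is stated for the Aubry set $\cA(L)$ and not for $\mN(L)$. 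Appealing to ``Mather's graph property'' to control tangential contacts of orbits in $\mN(L)\setminus\cA(L)$ does not work, because that theorem says nothing about such orbits.

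The paper closes this gap by never using $\pi|_{\mN(L)}$ at all. By the variational principle, $h_{\rm top}(\mN(L))$ is the supremum of metric entropies of invariant measures supported on $\mN(L)$. By Ma\~n\'e's Theorem V(c), the $\omega$-limit set of every orbit of $\mN(L)$ lies in $\cA(L)$, so by Poincar\'e recurrence every invariant measure on $\mN(L)$ is in fact supported on $\cA(L)$. Hence it suffices to show $h_{\rm top}(\cA(L))=0$; there the graph property applies, $\pi|_{\cA(L)}$ is a Lipschitz conjugacy onto a flow on a compact subset of the surface $M$, and Fathi's lemma together with Young's theorem give zero entropy. You should insert this measure-theoretic reduction from $\mN(L)$ to $\cA(L)$ before applying the graph property; once that is done your use of the non-crossing principle (for the Aubry set rather than the Ma\~n\'e set) becomes sound.
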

\begin{proof}
The topological entropy of $\mN(L)$ is the supremum of the metric entropies of 
the invariant measures supported on $\mN(L)$, see Theorem~\ref{VarPrin}.  
By \cite[Theorem V(c)]{Ma7} the $\om$-limit of any orbit in $\mN(L)$ is in the Aubry set $\cA(L)$.
Thus, by the Poincar\'e Recurrence Theorem, any invariant measure on $\mN(L)$ is supported on $\cA(L)$.
Therefore it is enough to
prove that the Aubry set has zero topological entropy. 
By the Graph Property \cite[Theorem VI(b)]{Ma7}, the flow on $\cA(L)$ is the image   
under a Lipschitz (conjugacy) map $(\pi|_{\cA(L)})^{-1}$
of a flow on a (Lipschitz) continuous  lamination on the surface $M$.
By Fathi~\cite[Lemma~3.3]{Fa7} and Young~\cite{lsy1} the flow on the 
projected Aubry set $\pi(\cA(L))$ has zero entropy and then 
(Walters~\cite[Theorem~7.2]{Walters}) $\cA(L)$ has zero topological entropy.
\end{proof}

The proof of Theorem~\ref{Tzeroentropy} is done in two steps. 
 The first step in subsection~\ref{ssappperorb} is the study of 
 how well hyperbolic minimizing measures can be approximated 
 by closed  orbits of a given period. For approximation with large periods
 on generic lagrangians see Ma\~n\'e~\cite[Theorem~F]{Ma6}.
 We found that the arguments of this proof follow elegantly using
 symbolic dynamics on the Ma\~n\'e set.

 For the second part of the proof of Theorem~\ref{Tzeroentropy}
 we show that for $\ga>0$ the set 
 $$
 \cT_\ga:=\{\,\phi\in C^k(M,\re)\;|\;
 h_{top}(\mN(L+\phi))\le \ga\,\}
 $$
 contains an open and dense set in $C^k(M,\re)$.
 For the open part we use the upper semicontinuity of the 
 Ma\~n\'e set and the uniform upper semicontinuity of the 
 metric entropy for h-expansive maps, which we prove in
 Appendix~\ref{AE}. The uniform h-expansivity needed is
 proved in Appendix~\ref{asha}. For the density we use 
 a short closed orbit with small action obtained in the first 
 step, perturb the lagrangian with a canal as in~\eqref{canal}
 and show that the new minimizing measures have to accumulate
 nearby the periodic orbit. Then we show that the entropy nearby
 a short periodic orbit must be small.
 
  In order to use structural stability of hyperbolic flows, so we can fix
  the symbolic dynamics for nearby flows we need aproximation of the
  vector fields in the $C^1$ topology. All the proofs of structural stability 
  in the literature are made for hyperbolic flows on a fixed manifold. 
  In our case the manifold is the energy level $\E_\phi:=E^{-1}(c(L+\phi))$
  which varies with the potential $\phi$. If the lagrangian flow in $\E_\phi$ 
  have no singularities these energy levels can be identified with the unit 
  tangent bundle, see Corolary~\ref{CSH1}. But $\E_\phi$ 
  depends on $c(L+\phi)$ which varies continuously on $\phi$.
  Appendices~\ref{asst} and \ref{ashms} are included to address this 
  apparent impasse. Ending with the desired statement in Remark~\ref{remell}.
  We need this for persistence of hyperbolicity, uniform constants in the shadowing lemma, 
  and the uniform h-expansivity.

    Since we assume hyperbolicity of the Aubry set, which is chain recurrent
    (cf. Ma\~n\'e~\cite[Theorem V]{Ma7}), the shadowing lemma implies the 
    existence of many periodic orbits nearby. Along this paper we do not perturb
    recurrent orbits to close them. We just choose carefully an existing periodic
    orbit and perturb the lagrangian by a canal as in~\eqref{canal}.
    
    \bigskip

    \bigskip
    
    \color{black}

\section{The Aubry set.}\quad

Recall that a curve $\ga:\re\to M$ is {\it static} for a Tonelli Lagrangian $L$
if 
$$
s<t \quad \then\quad
\int_s^t c(L)+ L(\ga,\dga)  = -\Phi_{c(L)}(\ga(t),\ga(s));
$$
equivalently (cf. Ma\~n\'e \cite[pp. 142--143]{Ma7}), 
if $\ga$ is semi-static and 
\begin{equation}\label{aubry}
 s<t\quad\then\quad \Phi_{c(L)}(\ga(s),\ga(t))+\Phi_{c(L)}(\ga(t),\ga(s))=0.
\end{equation}
The {\it Aubry set} is defined as
$$
\cA(L):=\{\,(\ga(t),\dga(t))\;|\; t\in\re, \;\ga\text{ is static}\;\},
$$
its elements are called {\it static vectors}.

\begin{Lemma}[A priori bound]\label{priori}\quad

For $C>0$ there exists $A_0=A_0[C]>0$ such that 
if $\ga:[0,T]\to M$
is a solution of the Euler-Lagrange equation with $A_L(\ga)<C\, T$, then 
$$
\lv\dga(t)\rv < A_0\qquad\text{ for all }t\in[0,T].
$$
\end{Lemma}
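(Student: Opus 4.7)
The plan is to combine three standard ingredients of Tonelli theory: a pointwise bound on $L$ at a well-chosen time, conservation of energy, and compactness of energy sublevels. First, since $A_L(\ga)=\int_0^T L(\ga,\dga)\,dt<CT$, the continuous function $t\mapsto L(\ga(t),\dga(t))$ has average strictly less than $C$ on $[0,T]$, so there exists $t_0\in[0,T]$ with $L(\ga(t_0),\dga(t_0))<C$. Applying superlinearity with slope $A=1$ yields a constant $B_1$ with $L(x,v)\ge |v|_x-B_1$ everywhere on $TM$, and hence $|\dga(t_0)|_{\ga(t_0)}<C+B_1$.

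Next, the set $K_C:=\{(x,v)\in TM:|v|_x\le C+B_1\}$ is compact because $M$ is compact, so the continuous energy $E$ attains a finite maximum $E_0=E_0(C)$ on $K_C$. In particular $E(\ga(t_0),\dga(t_0))\le E_0$, and since $E$ is a first integral of the Euler--Lagrange flow, the same bound $E(\ga(t),\dga(t))\le E_0$ holds for every $t\in[0,T]$.

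Finally, I will use that for a Tonelli Lagrangian on a compact manifold every energy sublevel $\{E\le E_0\}\subset TM$ is contained in some $\{|v|\le A_0\}$ with $A_0=A_0(E_0)$. The cleanest route is via the Legendre transform $(x,v)\mapsto(x,\partial_v L(x,v))$, which is a $C^1$ diffeomorphism $TM\to T^*M$ conjugating $E$ to the Tonelli Hamiltonian $H(x,p)=\sup_v(p\cdot v-L(x,v))$; superlinearity of $H$ in $p$ makes $\{H\le E_0\}$ fibrewise bounded and hence compact in $T^*M$, and its preimage under the Legendre transform is compact in $TM$. Setting $A_0:=A_0(E_0(C))$ (enlarged slightly to obtain the strict inequality) closes the argument. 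I do not foresee a serious obstacle: once the first two steps are in place, the third is a standard compactness fact about Tonelli energy levels, so the only care needed is to invoke or briefly re-derive it.
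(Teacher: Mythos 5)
Your proof is correct and follows the same skeleton as the paper's: locate $t_0$ with $L(\ga(t_0),\dga(t_0))<C$ by averaging, bound $|\dga(t_0)|$ by superlinearity, bound the energy at $t_0$, use energy conservation, and finally convert the energy bound back into a speed bound. The only genuine divergence is in that last conversion. The paper differentiates $s\mapsto E_L(x,sv)$ and integrates, using the uniform convexity constant $a$ to obtain the explicit quadratic lower bound $E_L(x,v)\ge\min_M E_L(\cdot,0)+\tfrac12 a|v|_x^2$, which immediately yields a concrete $A_0$ from the energy bound. You instead appeal to the Legendre diffeomorphism and the superlinearity of the Hamiltonian to argue that energy sublevels are compact; this is a softer argument that buys generality (it does not use the strong convexity constant $a$ directly) at the cost of an explicit constant. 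Both routes are standard and valid; the paper's is the more quantitative and self-contained of the two.
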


\begin{proof}
The Euler-Lagrange flow preserves the {\it energy function} 
\begin{equation}\label{EL}
E_L:= v \cdot \partial_v L -L.
\end{equation}
We have that
\begin{align}
\hskip -0.7cm\forall s\ge0\qquad
\tfrac{d\,}{ds}E_L(x,sv)\big|_s&=s\, v\cdot \partial_{vv}L(x,v)\cdot v \ge s\, a |v|_x^2.
\notag\\
E_L(x,v) &= E_L(x,0)+\int_0^{1} \tfrac{d\,}{ds}E_L(x,s v) \, ds
\notag\\
&\ge \min_{x\in M}E_L(x,0)+ \tfrac 12 a |v|_x^2.
\label{lbel}
\end{align}
Let
$$
g(r):= \sup\big\{w\cdot \partial_{vv}L(x,v)\cdot w \,:\, |v|_x\le r,\,|w|_x= 1\big\}.
$$
Then $g(r)\ge a$ and 
\begin{equation}\label{ubel}
E_L(x,v)\le \max_{x\in M}E_L(x,0) + \tfrac 12\, g(|v|_x)\, |v|_x^2.
\end{equation}

By the superlinearity there is $B>0$ such that $L(x,v)>|v|_x-B$ for all $(x,v)\in TM$.
Since $A_L(\ga)<C\,T$, the mean value theorem implies that there is $t_0\in]0,T[$ 
such that $|\dga(t_0)|<B+C$. Then~\eqref{ubel} gives an upper bound 
on the energy of $\ga$ and~\eqref{lbel} bounds the speed of $\ga$.

\end{proof}

 For $x,\,y\in M$ and $T>0$ define
 \begin{equation*}\label{defctxy}
 \cC_T(x,y):=
 \{\,\ga:[0,T]\to M\,|\, \ga \text{ is absolutely continuous}, \ga(0)=x,\,\ga(T)=y\,\}.
 \end{equation*}

 \begin{Corollary}\label{CAPB}\quad
 
 There exists $A_1>0$ such that if  $x,\,y\in M$ and $\ga\in\cC_T(x,y)$ is
 a solution of the Euler-Lagrange equation with 
 \begin{equation}\label{aphm}
 A_{L+c}(\ga) \le \Phi_c(x,y) + \max\{T,d(x,y)\},
  \end{equation}
 where $c=c(L)$,
 then
 \begin{enumerate}[(a)]
 \item $T\,\ge\,\tfrac 1{A_1}\; d(x,y)$.
 \label{CAPB1}
 \item  \;$|\dga(t)|\,\le\,A_1$ \; for all $t\in[0,T]$.
 \label{CAPB2}
 \end{enumerate}
 \end{Corollary}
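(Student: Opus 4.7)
The plan is to prove (a) directly from superlinearity combined with a sharp upper bound on $\Phi_c$, and then deduce (b) by reducing to Lemma~\ref{priori}.

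First I would record two elementary bounds. On the one hand, using a unit-speed Riemannian geodesic from $x$ to $y$ on $[0,d(x,y)]$ as a competitor in the infimum \eqref{actionpotential} gives
$$\Phi_c(x,y)\leq (M_0+c)\,d(x,y),$$
where $M_0:=\max\{L(x,v):|v|_x=1\}$ is finite by compactness of the unit sphere bundle. On the other hand, superlinearity provides, for every $A>0$, a constant $B(A)$ with $L(x,v)\geq A|v|_x-B(A)$, so integrating along $\gamma$ and using $\mathrm{length}(\gamma)\geq d(x,y)$ yields
$$A_L(\gamma)\geq A\,d(x,y)-B(A)\,T;$$
note that $B(A)\to\infty$ as $A\to\infty$ by superlinearity.

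For (a), the case $T\geq d(x,y)$ is trivial, so assume $T<d(x,y)$. Then $\max\{T,d(x,y)\}=d(x,y)$, and combining the two displays above with the hypothesis \eqref{aphm} (after subtracting $cT$) gives
$$(A-M_0-c-1)\,d(x,y)\;\leq\;(B(A)-c)\,T.$$
Choosing any $A$ with $A>M_0+c+1$ and $B(A)>c$ (possible by superlinearity), this rearranges to $T\geq d(x,y)/A_1^{(a)}$ with $A_1^{(a)}:=(B(A)-c)/(A-M_0-c-1)$.

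For (b), I would enlarge $A_1^{(a)}$ to be at least $1$ so that $d(x,y)\leq A_1^{(a)}\,T$ holds in both cases. Substituting this into the upper bound on $A_L(\gamma)$ coming from \eqref{aphm} together with the geodesic bound on $\Phi_c$ gives $A_L(\gamma)\leq C\,T$ for an explicit constant $C$ depending only on $L$, $c$, and $A_1^{(a)}$. Lemma~\ref{priori} then yields $|\dot\gamma(t)|\leq A_0[C]$ uniformly on $[0,T]$, and the proof finishes by taking $A_1:=\max\{A_1^{(a)},A_0[C]\}$. I do not expect any genuine obstacle; the only real point is the observation that the linear-in-$d(x,y)$ geodesic comparison bound for $\Phi_c$ is precisely what is needed to absorb the $\max\{T,d(x,y)\}$ slack term in \eqref{aphm} and force $T$ to be comparable to $d(x,y)$ whenever $T<d(x,y)$.
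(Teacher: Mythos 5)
Your proposal is correct and follows essentially the same route as the paper's own proof: split on whether $T\ge d(x,y)$, bound $\Phi_c$ from above by a unit-speed geodesic competitor (your $M_0$ is the paper's $\ell(1)$ up to the choice of normalization), invoke superlinearity to bound $A_L(\gamma)$ from below by $A\,d(x,y)-B(A)\,T$, deduce $T\gtrsim d(x,y)$, and then reduce to Lemma~\ref{priori} for the velocity bound. The only cosmetic difference is that you unify cases (a) and (b) by first showing $d(x,y)\le A_1^{(a)} T$ always holds, whereas the paper handles the two cases separately but with the same ingredients.
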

 
 \begin{proof}
 First suppose that  $d(x,y)\le T$. Then item~\eqref{CAPB1} holds with $A_1=1$.
 Let
 \begin{equation}\label{llr}
 \ell(r):=|c|+\sup\{\,L(x,v)\,|\, (x,v)\in TM,\, |v|\le r\,\}.
 \end{equation}
 Since $d(x,y)\le T$, there exists a $C^1$ curve $\eta:[0,T]\to M$  joining $x$ to $y$ with $|\deta|\le 1$.
 Using~\eqref{aphm}, we have that 
 $$
 A_{L+c}(\ga)\le \Phi_c(x,y)+T\le A_{L+c}(\eta)+T
 \le \big(\ell(1)+c\big)\,T+ T.
 $$
 Then item~\eqref{CAPB2} holds for $A_1=A_0[\ell(1)+c+1]$ where $A_0$ is from Lemma~\ref{priori}.
 
 Now suppose that $d(x,y)\ge T$.
 Let $\eta:[0,d(x,y)]\to M$ be a minimal geodesic with 
 $|\deta|\equiv 1$ joining $x$ to $y$. Let 
 $D:=\ell(1)+c +2>0$. From the superlinearity property there is 
 $B>1$ such that
 $$
 L(x,v) + c > D\, |v| - B, \qquad \forall (x,v)\in TM.
 $$
 Then
 \begin{align}
 [\ell(1)+c]\;d(x,y) 
    &\ge A_{L+c}(\eta) \ge \Phi_c(x,y)         
    \label{ap-1}\\
    &\ge A_{L+c}(\ga)-d(x,y) \quad\qquad \text{using \eqref{aphm}}
    \label{ap-2}\\
    &\ge\int_0^T\bigl( D\;|\dga|-B\,\bigr)\,dt - d(x,y)
    \notag\\
    &\ge D\;d(x,y) - B\,T - d(x,y).
    \notag
 \end{align}
 Hence
 $$
 T \ge \tfrac{D-\ell(1)-c-1}{B}\; d(x,y) = \tfrac 1B \; d(x,y).
 $$
 From~\eqref{ap-1} and~\eqref{ap-2}, we get that
 \begin{align*}
 A_L(\ga)&\le \bigl[\,\ell(1)+c+1\,\bigr]\;d(x,y)-c\,T ,
        \\
         &\le \bigl\{\,B\,[\,\ell(1)+c+1\,]-c\,\bigr\}\;T.
 \end{align*}
 Since $B>1$, Lemma~\ref{priori} completes the proof.
 
 \end{proof}

 We say that a curve $\ga:[0,T]\to M$ is a {\it Tonelli minimizer} if it minimizes the action functional
 on $\cC_T(\ga(0),\ga(T))$, i.e. if it is a minimizer with fixed endpoints and fixed time interval.

 \begin{Corollary}\label{Cbtm}
 There is $A>0$ such that if $x,\,y\in M$ and $\eta_n\in\cC_{T_n}(x,y)$, $n\in\na^+$ is a Tonelli minimizer
 with
 $$
 A_{L+c}(\eta_n)\le \Phi_c(x,y)+\tfrac1n,
 $$
 then there is $N_0>0$ such that $\forall n>N_0$, $\forall t\in[0,T_n]$, $|\deta_n(t)|<A$.
 \end{Corollary}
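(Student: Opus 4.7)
The plan is to reduce to Corollary~\ref{CAPB}, which supplies a universal constant $A_1$ bounding $|\dga(t)|$ whenever $A_{L+c}(\ga)\le\Phi_c(x,y)+\max\{T,d(x,y)\}$. Under our hypothesis $A_{L+c}(\eta_n)\le\Phi_c(x,y)+\tfrac1n$, that corollary applies as soon as $\tfrac1n\le\max\{T_n,d(x,y)\}$, so the strategy is to verify this inequality for all sufficiently large $n$ and to deal with the (eventually empty or short-loop) remainder by a separate argument.

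First I would dispose of the case $x\ne y$. Here $d(x,y)>0$ is a fixed positive number, so for every $n>1/d(x,y)$ one has $\tfrac1n<d(x,y)\le\max\{T_n,d(x,y)\}$, and Corollary~\ref{CAPB}(b) immediately gives $|\deta_n(t)|\le A_1$ on $[0,T_n]$. When $x=y$, the same application works whenever $T_n\ge\tfrac1n$. The only regime not yet covered is $x=y$ with $T_n<\tfrac1n$ (so $T_n\to 0$). There I would compare $\eta_n$ with the constant curve $\xi_n(t)\equiv x$ on $[0,T_n]$: since $\eta_n$ is a Tonelli minimizer in $\cC_{T_n}(x,x)$,
$$
A_L(\eta_n)\le A_L(\xi_n)=T_n\,L(x,0)\le L_{\max}\,T_n,
\qquad L_{\max}:=\max_{x\in M}L(x,0),
$$
so Lemma~\ref{priori} applied with the constant $C=L_{\max}+1$ yields $|\deta_n(t)|\le A_0[L_{\max}+1]$.

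Setting $A:=\max\{A_1,A_0[L_{\max}+1]\}+1$ and $N_0$ large enough to absorb the finitely many exceptional indices (those $n\le 1/d(x,y)$ in the first case) completes the argument. The only real obstacle is the short-loop regime $x=y$, $T_n\to 0$, where the $\tfrac1n$ slack dominates $\max\{T_n,d(x,y)\}$ and Corollary~\ref{CAPB} cannot be used; the constant-curve competitor sidesteps the issue by exploiting that $\eta_n$ minimizes $A_L$ over \emph{fixed-time} variations, something not available for the free-time infimum defining $\Phi_c$.
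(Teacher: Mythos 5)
Your argument is correct and mirrors the paper's proof: split according to $d(x,y)>0$ (where Corollary~\ref{CAPB} applies once $n>1/d(x,y)$) versus $x=y$ (where the constant curve $\xi_n\equiv x$ and Lemma~\ref{priori} give the bound), then take the larger constant. One small slip: you take $C=L_{\max}+1$ with $L_{\max}=\max_x L(x,0)$, but if $L(x,0)\le -1$ everywhere then $C\le 0$ and Lemma~\ref{priori} (which requires $C>0$) does not literally apply; the paper sidesteps this by using $C=1+\sup_x|L(x,0)|$. The fix is immediate — replace $L_{\max}$ by $|L_{\max}|$, or just note $A_L(\eta_n)\le L_{\max}T_n< (|L_{\max}|+1)T_n$ — so this is cosmetic rather than a real gap.
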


 \begin{proof}
 If $d(x,y)>0$ then for $n$ large enough $d(x,y)>\tfrac 1n$.
 In this case Corollary~\ref{CAPB} implies the result with the constant $A_1$.
 If $d(x,y)=0$ let $\xi_n:[0,T_n]\to\{x\}$ be the constant curve.
 Since $\eta_n$ is a Tonelli minimizer, we have that
 $$
 A_L(\eta_n)\le A_L(\xi_n) =\int_0^{T_n}L(x,0)\,dt\le |L(x,0)|\, T_n.
 $$
 Lemma~\ref{priori} implies that $|\deta_n|\le A_0(C)$ with $C=1+\sup_{x\in M}|L(x,0)|$.
 Now take 
 \linebreak
 $A=\max\{A_0[C],A_1\}$.
 \end{proof}

\begin{Lemma}\label{ALinv}\quad

If $(x,v)$ is a static vector then $\ga:\re\to M$, $\ga(t)=\pi\vr_t(x,v)$ 
is a static curve, i.e. the Aubry set $\cA(L)$ is invariant.
\end{Lemma}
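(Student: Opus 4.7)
The plan is to reduce this to the translation-invariance of the static condition together with the fact that static curves are Euler-Lagrange solutions.

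First, unpack the hypothesis: by definition of $\cA(L)$, there exists a static curve $\eta:\re\to M$ and some $t_0\in\re$ with $\eta(t_0)=x$ and $\deta(t_0)=v$. The goal is to prove that $\ga(s):=\pi\vr_s(x,v)$ satisfies
$$
\int_s^t c(L)+L(\ga,\dga)\,d\tau = -\Phi_{c(L)}(\ga(t),\ga(s))\qquad\text{for all } s<t.
$$

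The first key observation is that a static curve is semi-static, and any semi-static curve is a Tonelli minimizer on every compact subinterval (since $\Phi_{c(L)}(\ga(s),\ga(t))$ realizes the infimum over $\cC(\ga(s),\ga(t))$ and adding extra time can only decrease the action above $c(L)$-perturbation appropriately, by the standard argument with Corollary~\ref{CAPB} providing the a priori speed bounds). In particular $\eta$ satisfies the Euler-Lagrange equation, so by uniqueness of EL solutions with initial condition $(x,v)=(\eta(t_0),\deta(t_0))$ we get
$$
\eta(t_0+s)=\pi\vr_s(x,v)=\ga(s)\qquad\text{for all } s\in\re.
$$

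The second key observation is that the defining relation of staticity is invariant under time translations of the curve: for any $s<t$,
$$
\int_s^t c(L)+L(\ga,\dga)\,d\tau = \int_{s+t_0}^{t+t_0}c(L)+L(\eta,\deta)\,d\tau = -\Phi_{c(L)}(\eta(t+t_0),\eta(s+t_0)) = -\Phi_{c(L)}(\ga(t),\ga(s)),
$$
where the middle equality uses that $\eta$ is static. This proves $\ga$ is static, so $\vr_s(x,v)=(\ga(s),\dga(s))\in\cA(L)$ for all $s$, which is the desired invariance.

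The argument is essentially a two-line observation once uniqueness of EL solutions is granted; the only subtlety worth flagging is that we used that semi-static (equivalently, static) curves really are $C^2$ solutions of the Euler-Lagrange equation, which follows from the Tonelli minimization property together with the a priori speed bound from Lemma~\ref{priori}. Everything else is purely formal.
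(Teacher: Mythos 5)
Your proof is correct under the paper's stated definitions (a static curve is defined on all of $\re$), and it is substantially simpler and more direct than the paper's. The key facts you invoke are right: static implies semi-static; a semi-static curve is a Tonelli minimizer on every compact subinterval (because $\cC_{t-s}(\ga(s),\ga(t))\subset\cC(\ga(s),\ga(t))$, so achieving the free-time infimum $\Phi_{c(L)}$ a fortiori achieves the fixed-time infimum); and a Tonelli minimizer is $C^2$ and solves Euler--Lagrange. The parenthetical about ``adding extra time can only decrease the action \dots with Corollary~\ref{CAPB}'' is unnecessary and a bit opaque --- the subset containment is all you need, and the a priori speed bounds play no role once you already have a minimizer in hand. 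With $\eta$ identified as the flow orbit $\ga$ up to a time shift, translation invariance of the static identity finishes the argument, exactly as you say.

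The paper, however, takes a genuinely different and more laborious route. It starts from the strictly weaker hypothesis that only a finite segment $\ga|_{[a,b]}$ of the Euler--Lagrange orbit is static, and extends staticity to all of $\ga|_\re$ by a limit argument: it picks Tonelli minimizers $\eta_n$ from $\ga(b)$ back to $\ga(a)$ with action converging to $\Phi_{c(L)}(\ga(b),\ga(a))$, proves $\deta_n(0)\to\dga(b)$ and $\deta_n(T_n)\to\dga(a)$ by a corner-removal contradiction (a non-$C^1$ concatenation would produce a closed loop of strictly negative $L+c$-action, forcing $\Phi_k=-\infty$ for some $k>c(L)$), and then treats separately the cases $\limsup T_n<\infty$ (which yields a periodic semi-static orbit) and $T_n\to\infty$ (direct estimate on $\ga|_{[a-s,b+s]}$). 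What this buys is the sharper implication ``static on some $[a,b]\Rightarrow$ static on $\re$,'' which is exactly what one needs when producing Aubry vectors from finite-time data, and which does \emph{not} follow from your argument. For the lemma literally as stated your two-line observation suffices; the paper is implicitly proving a stronger statement than it advertises, which is why it does so much more work.
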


 \begin{proof}\quad
 
 Let $\ga(t)=\pi\,\vr_t(x,v)$ and suppose that $\ga|_{[a,b]}$ is static.
 We have to prove that all $\ga|_\re$ is static. 
 Let
 $\eta_n\in\cC_{T_n}(\ga(b),\ga(a))$
 be a Tonelli minimizer  with
  $$
 A_{L+c}(\eta_n)<\Phi_c(\ga(b),\ga(a))+\tfrac 1n.
 $$
 
 By Corollary~\ref{Cbtm}, for $n$ large enough,
 $|\deta_n|<A$. We can assume that 
 $\deta_n(0)\to w$. Let $\xi(s)=\pi\,\vr_s(w)$. If $w\ne \dga(b)$
 then for some $\e>0$ the curve $\ga|_{[b-\e,b]}*\xi|_{[0,\e]}$ is not $C^1$, and
 hence  it
 can not be a (Tonelli) minimizer of $A_{L+c}$ in 
 $\cC_{2\e}\big(\ga(b-\e),\xi(\e)\big)$. Thus
 $$
 \Phi_c(\ga(b-\e),\xi(\e))
        < A_{L+c}(\ga|_{[b-\e,b]})
        + A_{L+c}(\xi|_{[0,\e]}).
 $$
 \begin{align*}
 &\Phi_c(\ga(a),\ga(a))
    \le \Phi_c(\ga(a),\ga(b-\e))
         +\Phi_c(\ga(b-\e),\xi(\e))
         +\Phi_c(\xi(\e),\ga(a))
    \\
    &\;< A_{L+c}(\ga_{[a,b-\e]})
       +A_{L+c}(\ga|_{[b-\e,b]})
       +A_{L+c}(\xi|_{[0,\e]})
       +\liminf_n A_{L+c}(\eta_n|_{[\e,T_n]})
    \\
    &\;\le A_{L+c}(\ga|_{[a,b]})
      +\lim_nA_{L+c}\bigl(\,\eta_n|_{[0,\e]}*\eta_n|_{[\e,T_n]}\bigr)       
    \\
    &\;= -\Phi_c(\ga(b),\ga(a))+\Phi_c(\ga(b),\ga(a))
    =0.
 \end{align*}
 Thus there is a closed curve, from $\ga(a)$ to itself, with negative
 $L+c$ action, and also negative $L+k$ action for some $k>c(L)$.  
 Concatenating the curve with itself many times  
 shows that $\Phi_k(\ga(a),\ga(a))=-\infty$.
 By~\eqref{defcL} this implies that $k\le c(L)$, which is 
 a contradiction.
 Thus $w=\dga(b)$ and similarly $\lim_n\deta_n(T_n)=\dga(a)$.
 
 If $\limsup T_n<+\infty$, we can assume that $\tau=\lim_n T_n>0$
 exists. In this case $\ga$ is a semi-static
 periodic orbit of period $\tau+b-a$ and then $\ga|_\re$  is static.
 
 Now suppose that $\lim_n T_n=+\infty$.
 If $s>0$, we have that
 \begin{align*}
 A_{L+c}&(\ga|_{[a-s,b+s]})
   +\Phi_c(\ga(b+s),\ga(a-s))\le
   \\
   &\le\lim_n\big\{\,A_{L+c}(\eta_n|_{[T_n-s,T_n]})
       +A_{L+c}(\ga|_{[a,b]})
       \begin{aligned}[t]
       &+A_{L+c}(\eta_n|_{[0,s]})\,\big\}\\
       &+\Phi_c(\ga(b+s),\ga(a-s))
       \end{aligned}
    \\
    &\le\begin{aligned}[t]
    &-\Phi_c(\ga(b),\ga(a)) \\
    &+\lim_n \big\{\,A_{L+c}(\eta_n|_{[0,s]})
                    +A_{L+c}(\eta_n|_{[s,T_n-s]})
                    +A_{L+c}(\eta_n|_{[T_n-s,T_n]})
             \,\big\}
    \end{aligned}         
    \\
    &\le -\Phi_c(\ga(b),\ga(a))+\Phi_c(\ga(b),\ga(a))              
    =0.
 \end{align*}
 Thus $\ga_{[a-s,b+s]}$ is static for all $s>0$.

\end{proof}

Let  $\cM_{\text{inv}}(L)$ be the set of Borel probabilities in $TM$ invariant under the 
 Lagrangian flow.
Denote by $\cM_{\min}(L)$ the set of minimizing measures for the Lagrangian $L$, i.e.
\begin{equation}\label{Mmin}
\cM_{\min}(L):=\Big\{\,\mu\in\cM_{\text{inv}}(L)\;\Big|\;
\int_{TM}L\,d\mu = -c(L)\;\Big\}.
\end{equation}
Their name is justified (cf. Ma\~n\'e \cite[Theorem II]{Ma7}) by 
\begin{equation}\label{minmeas}
-c(L) = \min_{\mu\in\cM_{\text{inv}}(L)}\int_{TM} L \; d\mu 
= \min_{\mu\in\cC(TM)}\int_{TM} L \; d\mu.
\end{equation}
 Fathi and Siconolfi \cite[Theorem 1.6]{FaSi} prove the second equality in \eqref{minmeas}
 where the set of  {\it closed measures} is defined by 
$$
{\mathcal C}(TM):=\Big\{ \, \mu \text{ Borel probability on }TM\; \Big\vert\;
\forall \phi\in C^1(M,\re) \; \int_{TM} d\phi\; d\mu =0\,\Big\}.
$$

Recall
\begin{Theorem}[Theorem~IV in \cite{Ma7} or~\cite{CDI}]\label{ecmm}\quad

A probability $\mu\in\cM_{\text{inv}}(L)$ is minimizing if and only if 
$\supp\mu\subset\cA(L)$.
\end{Theorem}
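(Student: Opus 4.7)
The plan is to prove both implications separately, using as standard ingredients: (i) $\Phi_c$ is Lipschitz on $M\times M$ (hence bounded by some $K$) by the a priori estimates of Corollary~\ref{CAPB}; (ii) $\Phi_c$ is subadditive; and (iii) $\Phi_c(x,x)\ge 0$ for every $x$, since otherwise iterating a closed loop at $x$ would force $\Phi_k(x,x)=-\infty$ for some $k>c(L)$, contradicting~\eqref{defcL}. For the easy direction $(\Leftarrow)$, if $\supp\mu\subseteq\cA(L)$ then $\mu$-a.e.\ orbit is static, so $\int_0^T(L+c)(\ga,\dga)\,dt=-\Phi_c(\ga(T),\ga(0))$ is bounded by $K$ in absolute value. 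Hence $\tfrac1T\int_0^T(L+c)\circ\vr_t\,dt\to 0$ pointwise $\mu$-a.e.; integrating against $\mu$ with dominated convergence gives $\int(L+c)\,d\mu=0$, i.e.\ $\mu$ is minimizing.

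For the hard direction $(\Rightarrow)$, assume $\mu$ is minimizing; I will show every orbit passing through $\supp\mu$ is first semi-static, then static. \emph{Semi-static step.} Suppose some $(x_0,v_0)\in\supp\mu$ has an orbit $\ga_0$ with $A_{L+c}(\ga_0|_{[s_0,t_0]})>\Phi_c(\ga_0(s_0),\ga_0(t_0))+2\de$ for some $s_0<t_0$ and $\de>0$. By joint continuity of the flow and of $\Phi_c$, there is an open neighborhood $U\ni\vr_{s_0}(x_0,v_0)$ on which $A_{L+c}(\ga_{x,v}|_{[0,\tau]})\ge\Phi_c(\pi x,\pi\vr_\tau(x,v))+\de$ for every $(x,v)\in U$ ($\tau:=t_0-s_0$), and $\mu(U)>0$ since $\supp\mu$ is flow-invariant. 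For any $N\ge 1$ and any $(x,v)$, decomposing $[0,N\tau]$ into $N$ consecutive blocks of length $\tau$ and using subadditivity of $\Phi_c$ yields
\[
A_{L+c}(\ga_{x,v}|_{[0,N\tau]})\ge \Phi_c(\ga(0),\ga(N\tau))+\de\cdot\#\{\,k<N:\vr_{k\tau}(x,v)\in U\,\}\ge -K+\de\cdot\#\{\cdots\}.
\]
Integrating against $\mu$, the left side becomes $N\tau\int(L+c)\,d\mu=0$, while the right side is $\ge -K+\de N\mu(U)$. Letting $N\to\infty$ is a contradiction.

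\emph{Static step.} By Poincar\'e recurrence, for $\mu$-a.e.\ $(x,v)$ there exist $T_n\to\infty$ with $\vr_{T_n}(x,v)\to(x,v)$. A unit-speed geodesic from $\ga(T_n)$ to $\ga(0)$ has action tending to $0$ (its length $\to 0$), so $\Phi_c(\ga(T_n),\ga(0))\to 0$; continuity of $\Phi_c$ then forces $\Phi_c(\ga(0),\ga(0))=0$. The semi-static property applied to the decomposition $[0,T_n]=[0,t]\cup[t,T_n]$ gives $\Phi_c(\ga(0),\ga(t))+\Phi_c(\ga(t),\ga(T_n))=\Phi_c(\ga(0),\ga(T_n))$; passing to the limit yields $\Phi_c(\ga(0),\ga(t))+\Phi_c(\ga(t),\ga(0))=0$, which is~\eqref{aubry} at base point $0$. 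Translating the orbit by $s$ gives~\eqref{aubry} at arbitrary base $s$, and since $\cA(L)$ is closed and has full $\mu$-measure we conclude $\supp\mu\subseteq\cA(L)$. The \emph{main obstacle} is the semi-static step, where a pointwise $O(1)$ action excess must be converted into a linear-in-$N$ obstruction; the block decomposition, subadditivity, and integration against $\mu$ together effect this conversion cleanly and avoid any appeal to ergodic decomposition.
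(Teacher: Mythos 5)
The paper does not prove Theorem~\ref{ecmm}: it is stated as a recalled result, cited to Theorem~IV of Ma\~n\'e~\cite{Ma7} (also \cite{CDI}), so there is no in-text proof to compare against. What you have written is a complete, self-contained proof of the cited theorem, and it is correct. The easy direction is the standard argument: for a static orbit $A_{L+c}(\ga|_{[0,T]})=-\Phi_c(\ga(T),\ga(0))$ is uniformly bounded, so the Birkhoff average of $L+c$ vanishes. For the hard direction, your semi-static step is the crux: a pointwise $O(1)$ action surplus over a fixed time-$\tau$ window is upgraded to an excess that grows linearly in $N$ by counting returns to the positive-measure neighborhood $U$ and using subadditivity of $\Phi_c$ together with its lower bound $-K$; integrating against the invariant $\mu$ then collides with $\int(L+c)\,d\mu=0$. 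This block-counting mechanism is tight and avoids the ergodic decomposition. Your static step (Poincar\'e recurrence plus Lipschitz continuity of $\Phi_c$ and the identity $\Phi_c(x,x)\ge 0$ to pass from semi-static to~\eqref{aubry}) is essentially the same mechanism the paper itself uses in the proof of Corollary~\ref{cecmm} to show that $\Omega(\vr_t|_{\mN(L)})\subset\cA(L)$, so your argument is stylistically consistent with the surrounding text.

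One small attribution slip: the Lipschitz continuity of $\Phi_c$ is what the paper cites to Theorem~I of \cite{Ma7} or~\cite{CDI} (see the proof of Corollary~\ref{cecmm}), not to Corollary~\ref{CAPB}; the latter gives a priori bounds on near-minimizers, which is related but not the same statement. This does not affect the validity of your argument.
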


\begin{Corollary}\label{cecmm}\quad

A probability $\mu\in\cM_{\text{inv}}(L)$ is minimizing if and only if 
$\supp\mu\subset\mN(L)$.
\end{Corollary}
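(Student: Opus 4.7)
The plan is to prove both implications using Theorem \ref{ecmm} (the characterization of minimizing measures via the Aubry set) together with the inclusion $\cA(L) \subseteq \mN(L)$ established in \eqref{mane}, plus the Poincar\'e Recurrence Theorem.

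For the forward direction ``$\Rightarrow$'', there is essentially nothing to do: if $\mu \in \cM_{\min}(L)$, then by Theorem \ref{ecmm}, $\supp\mu \subset \cA(L)$, and since $\cA(L) \subseteq \mN(L)$ by \eqref{mane}, we obtain $\supp\mu \subset \mN(L)$.

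For the converse ``$\Leftarrow$'', suppose $\mu \in \cM_{\text{inv}}(L)$ and $\supp\mu \subset \mN(L)$. By Theorem \ref{ecmm}, it suffices to show $\supp\mu \subset \cA(L)$. First, I would invoke the Poincar\'e Recurrence Theorem to conclude that the set $R$ of recurrent points of the Lagrangian flow has full $\mu$-measure. Next, for each $(x,v) \in R \cap \supp\mu \subset \mN(L)$ there exists $t_n \to +\infty$ with $\vr_{t_n}(x,v) \to (x,v)$, so $(x,v)$ belongs to its own $\omega$-limit. By Ma\~n\'e's Theorem V(c) of \cite{Ma7}, the $\omega$-limit of any orbit in $\mN(L)$ is contained in $\cA(L)$, and therefore $(x,v) \in \cA(L)$. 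Thus $R \cap \supp\mu \subset \cA(L)$. Since $R \cap \supp\mu$ has full $\mu$-measure, it is dense in $\supp\mu$, and since $\cA(L)$ is closed (the defining identity \eqref{aubry} is closed because $\Phi_{c(L)}$ is continuous, and the property is preserved under limits of solutions of the Euler-Lagrange equation, cf.\ Lemma \ref{ALinv}), we conclude $\supp\mu \subset \cA(L)$, and Theorem \ref{ecmm} finishes the argument.

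There is no real obstacle here: the entire statement reduces the Ma\~n\'e set to the Aubry set at the level of invariant measures, and this reduction is exactly what the Poincar\'e Recurrence Theorem combined with the $\omega$-limit statement of Ma\~n\'e's Theorem V(c) provides. The only minor point to be careful about is the closedness of $\cA(L)$, needed to pass from recurrent points (full measure) to the whole support; this follows directly from Lemma \ref{ALinv} and the continuity of the action potential $\Phi_{c(L)}$ established in the preceding framework.
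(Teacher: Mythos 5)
Your proof is correct and follows the same basic route as the paper: Poincar\'e recurrence reduces matters to (almost-every) recurrent points, and these are sent into the Aubry set via the fact that $\omega$-limits of semi-static orbits are static. The difference is in how that last fact is justified. You cite Ma\~n\'e's Theorem V(c) directly, which is a legitimate move --- indeed the paper itself cites exactly this in the proof of the Main Proposition just above the Corollary. The paper's own proof of Corollary~\ref{cecmm}, by contrast, is self-contained: it shows directly that the non-wandering set $\Om(\vr_t|_{\mN(L)})$ lies in $\cA(L)$ by an inequality involving the (Lipschitz) action potential $\Phi_{c(L)}$, then notes $\supp\mu\subset\Om(\vr_t|_{\mN(L)})$ by Poincar\'e recurrence. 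A small structural advantage of the paper's version is that it lands in the non-wandering set, which is closed by definition, whereas your route passes through ``$\cA(L)$ is closed'', a true but not entirely free fact. Your parenthetical justification is a bit loose: condition~\eqref{aubry} being closed in $(x,y)$ gives closedness of the projected Aubry set, and Lemma~\ref{ALinv} gives invariance, not closedness of the lifted set; one really uses compactness of $\cA(L)$ (a standard result, via the a priori bound on speeds and the graph property). With that caveat noted, the argument is sound.
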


\begin{proof}
It is enough to prove that invariant probabilities supported in $\mN(L)$ are 
actually supported in $\cA(L)$.
Denote by $\vr_t=\vr^L_t$  the Lagrangian flow.
We first prove that the non-wandering set of the restriction $\vr_t|_{\mN(L)}$
satisfies $\Om(\vr_t|_{\mN(L)})\subset\cA(L)$. If $\vrt\in\Om(\vr^L_t|_{\mN(L)})$ then there is a sequence $\tt_n\in\mN(L)$ and $t_n\ge 2$ such that $\lim_n\tt_n=\vrt=\lim_n\vr_{t_n}(\tt_n)$. 
The action potential $\Phi_k$ in~\eqref{actionpotential} is Lipschitz 
by Theorem~I in Ma\~n\'e~\cite{Ma7} or~\cite{CDI}.
Then
\begin{align*}
A_{L+c}(\vr_{[0,1]}(\vrt))+\Phi_{c(L)}\big(\pi\vr_1(\vrt),\pi(\vrt)\big)
&\le \lim_n A_{L+c}(\vr_{[0,1]}(\tt_n))+ \lim_n A_{L+c}(\vr_{[1,t_n]}(\tt_n))
\\
&\le \lim_n A_{L+c}(\vr_{[0,t_n]}(\tt_n)) =\lim_n\Phi_{c(L)}(\pi\tt_n,\pi\vr_{t_n}(\tt_n))
\\
&\le \Phi_{c(L)}(\pi\vrt,\pi\vrt)=0.
\end{align*}
And hence $\vrt\in\cA(L)$.
Now, if $\mu\in\cM_{\text{inv}}(L)$ has $\supp\mu\subset \mN(L)$, by Poincar\'e recurrence theorem
$\supp\mu\subset \Om(\vr|_{\mN(L)})\subset\cA(L)$.

\end{proof}

\section{Symbolic Dynamics for the Aubry set.}\quad

Throughout the rest of the section we will identify a periodic orbit with the 
invariant probability supported on
the periodic orbit. 

The first two results, Lemma~\ref{Lper} and Proposition~\ref{Pper} follow
arguments by X. Bressaud and A. Quas~\cite{BQ}.

Let $A\in\{0,1\}^{M\times M}$ be a $M\times M$ matrix  with entries in $\{0,1\}$.
The subshift  of finite type $\Si_A$ associated to $A$ is the set
$$
\Si_A=\big\{\;\ox=(x_i)_{i\in\Z}\in \{1,\ldots,M\}^\Z \;\big|\quad \forall i\in\Z\quad A(x_i,x_{i+1})=1\;\big\},
$$
endowed with the metric
\begin{equation}\label{da}
d_a(\ox,\oy) = a^{-i}, \qquad i=\max\{\;k\in\na\;|\; x_i=y_i\;\;\forall |i|\le k\;\}
\end{equation}
for some $a>1$ and the {\it shift transformation} is
$$
\si:\Si_A\to\Si_A, \qquad \forall i\in\Z\quad \si(\ox)_i = x_{i+1}.
$$

\bigskip

\begin{Lemma}\label{Lper}
Let $\Si_A$ be a shift of finite type  with $M$ symbols and topological
entropy $h$. 
Then $\Si_A$ contains a periodic orbit of period at most $1+M \ee^{1-h}$.
\end{Lemma}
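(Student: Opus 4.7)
My plan is to prove the bound by a direct combinatorial counting argument, in the spirit of Bressaud and Quas. Let $P$ denote the minimum period of $\Sigma_A$; I may assume $P\geq 2$, since otherwise the conclusion is trivial.

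The central observation is that, by the minimality of $P$, any admissible word of length $P$ in $\Sigma_A$ must consist of $P$ pairwise distinct symbols. Indeed, if $w_0 w_1 \cdots w_{P-1}$ were admissible with $w_i = w_j$ for some $0 \le i < j \le P-1$, then $w_i \to w_{i+1} \to \cdots \to w_j = w_i$ would be a closed path in the graph of $A$ of length $j - i < P$, contradicting the minimality of $P$. Equivalently, $A^k(i,i) = 0$ for every state $i$ and every $1 \le k \le P-1$.

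From this observation I would derive two estimates on the number $N_n$ of admissible words of length $n$. Firstly, counting ordered $P$-tuples of distinct symbols from $\{1,\dots,M\}$ gives $N_P \le M(M-1)(M-2)\cdots (M-P+1)$. Secondly, the submultiplicativity $N_{n+m}\le N_n N_m$ (each $(n+m)$-block is determined by its $n$-prefix and the way it extends) together with Fekete's lemma implies $h = \inf_n (1/n)\log N_n$, and hence $N_n \ge e^{nh}$ for every $n$. Specialising at $n=P$ yields the crucial inequality
\[
  e^{Ph}\;\le\; M(M-1)(M-2)\cdots (M-P+1).
\]

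The remaining task is to extract from this estimate the asymmetric form $P\le 1+Me^{1-h}$ stated in the lemma. I expect this final bookkeeping step to be the main technical obstacle: a naive application of AM--GM or Jensen only gives the weaker $h \le \log (M - (P-1)/2)$, so one needs a sharper estimate of the falling factorial. The factor of $e$ in the target bound suggests comparison with an exponential via an inequality of Stirling type, along the lines of $(P-1)! \ge ((P-1)/e)^{P-1}$, which when combined with $\binom{M}{P-1}\le (eM/(P-1))^{P-1}$ should yield the bound in the desired form. Tracking the precise constants is delicate, and treating the regimes $P-1\le e$ and $P-1 > e$ separately may be needed.
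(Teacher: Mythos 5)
Your ``central observation'' --- that a legal word of length $P$ (with $P$ the minimal period) must have all distinct symbols --- is correct but strictly weaker than what is needed, and the resulting estimate $N_P \le M(M-1)\cdots(M-P+1)$ is not strong enough to give the lemma. To see concretely that no Stirling-type bookkeeping can rescue it: with $M=100$ and $h=\log 50$, the inequality $e^{hP}\le M!/(M-P)!$ holds for all $P$ up to roughly $70$, whereas the lemma asserts $P\le 1+2e<7$. The trouble is that $M(M-1)\cdots(M-P+1)=\binom{M}{P}\,P!$, so the falling factorial carries no $1/(P-1)!$-type decay; and in your final paragraph you invoke $\binom{M}{P-1}\le(eM/(P-1))^{P-1}$, an inequality about a quantity --- the \emph{binomial coefficient} --- that you never related to $N_{P-1}$ and that does not follow from the distinctness observation.

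The paper closes this gap with a genuinely stronger combinatorial claim: a legal word of length $k:=P-1$ is \emph{determined by the set of symbols it contains}, so $N_k\le\binom{M}{k}$, which is smaller than your bound by a factor of about $k!$. The argument is: if $u$ and $v$ are distinct legal $k$-words on the same symbol set, then some consecutive pair $ab$ in $v$ appears in the opposite order in $u$ (otherwise every adjacency in $v$ is order-compatible with $u$, forcing $u=v$ by transitivity); the transition $a\to b$ then closes the segment of $u$ from $b$ to $a$ into a legal cycle of length at most $k$, contradicting minimality of $P=k+1$. From $e^{hk}\le\binom{M}{k}\le M^k/k!\le(Me/k)^k$, a $k$-th root gives $k\le Me^{1-h}$ with no case analysis. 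That determination-by-symbol-set step is the missing idea in your proposal.
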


\begin{proof}
Let $k+1$ be the period of the shortest periodic orbit in $\Si_A$.
We claim that a word of length $k$ in $\Si_A$ is determined by the set
of symbols that it contains.
First note that since there are no periodic orbits of period $k$ or less,
any allowed $k$-word must contain $k$ distinct symbols.
Now suppose that $u$ and $v$ are two distinct words of length $k$ 
in $\Si_A$ containing the same symbols. Then, since the words are different, 
there is a consecutive pair of   symbols, say $a$ and $b$, in $v$ which 
occur in the opposite order (not necessarily consecutively) in $u$.
Then the infinite concatenation of the segment of $u$ starting at $b$ 
and ending at $a$  gives a word in $\Si_A$ of period at most $k$, which 
contradicts the choice of $k$.

It follows that there are at most $\binom Mk$ words of length $k$.
Using the basic properties of topological entropy
\begin{align*}
\ee^{hk}\le \binom Mk\le \frac {M^k}{k!}
\le \left(\frac {M \ee} k\right)^k.
\end{align*}
Taking $k$th roots, we see that $k\le M\ee^{1-h}$.

\end{proof}

Theorem~6.6.5 in \cite{FH} implies  that if $\cA(L)$ is hyperbolic then there is a
hyperbolic set $\La$ in the energy level $c(L)$ which contains $\cA(L)$,
$\cA(L)\subset \La\subset E^{-1}\{c(L)\}$, and which has a Markov partition,
as defined in  \cite[Def. 6.6.4]{FH}.
The Markov partition consists of a finite 
set of rectangles $\fT=\{T_i\}_{i=1}^r$ included in mutually disjoint transversal disks 
$D_i\supset T_i$. There is a Lipschitz first return time 
$\tau:\cup\fT\to]0,\a]$, $\cup\fT:=\cup_{T\in\fT}\, T$, 
$$
\tau(\tt):=\min\{\,t>0\;|\; \vr_t(\tt)\in\cup\fT\,\},
$$
and first return map (also called Poincar\'e map) $F:\cup\fT \to\cup\fT$,
$$
F(\tt):=\vr_{\tau(\tt)}(\tt).
$$
By Theorem~6.6.5 in \cite{FH}
there is a subshift of finite type $\Om$ and a Lipschitz map
$\Pi:\Om\to \cup\fT$ which is a semiconjugacy between
the shift map $\si$ and the Poincar\'e map $F$.
Also $\Pi$ extends to a time preserving Lipschitz semiconjugacy
$\Pi:S(\Om,\otau)\to \La$ from the suspension $S(\Om,\otau)$ 
of the shift with return time $\otau:=\tau\circ\Pi:\Om\to]0,\a]$, 
as defined in~\eqref{som} in Appendix~\ref{ASD},
 with suspended flow $S_t$
defined in~\eqref{sust}, to the lagrangian flow $\vr_t|_\La$ on $\La$.
In other words, the following diagram commutes for every $t\in\re$ and
$\Pi$ is Lipschitz.
$$
\begin{CD}
S(\Om,\otau) @>S_t>> S(\Om,\otau)
\\
@V\Pi VV   @VV \Pi V
\\
\La @> \vr_t >> \La
\end{CD}
$$

\openup -1pt

A $\si$-invariant measure $\nu$ on $\Om$ induces a $\vr_t$-invariant
measure $\mu_\nu$ on $\La$ by
$$
\int_\La f\,d\mu_\nu :=
\int_\Om\left[\int_0^{\otau(\ow)} f\big(\vr_t(\Pi(\ow))\big)\,dt\right]d\nu(\ow).
$$
Define $B:\Om\to\re$ by 
$$
B(\ow):=\int_0^{\otau(\ow)} \Big[L\big(\vr_t(\Pi(\ow))\big)+c(L)\Big]\,dt.
$$
Then we have that
$$
A_{L+c(L)}(\mu_\nu)=\int_\La (L+c(L))\,d\mu_\nu =\int_\Om B\, d\nu.
$$
To fix notation we use the metric $d_a$ from~\eqref{da} on $\Om$, namely
\begin{equation*}
d_a(\ou,\ow):=a^{-n},
\qquad  
n:=\max\{\,k\in\na\;:\;\forall|i|\le k,\quad u_i=w_i\,\},
\end{equation*}
and $a>1$ is chosen  such that $\Pi:\Om\to \cup\fT$ 
is Lipschitz. Indeed, by Gr\"onwall's inequality the image on an n-cylinder
contains a ball of radius $A\, (\Lip \vr_B)^{-n}$ where $A$ is constant and
$B=\sup \tau$. So if $a>1$ is large enough, then $\Pi$ is Lipschitz.

The subshift $\Om$ has
symbols in $\fT$ and a transition matrix $A:\fT\times\fT\to\{0,1\}$ such that
$$
\Om =\Si(A)=\{\,\ow\in \fT^\Z\;:\; \forall i\in\Z\quad A(w_i,w_{i+1})=1\,\}.
$$
We say that $(w_1,\ldots,w_n)\in\fT^n$ is a {\it legal word in $\Om$} iff
$A(w_i,w_{i+1})=1$ for all $i=1,\ldots,n-1$.
A {\it cylinder} in $\Om$ is a set of the form
$$
C(n,m,\ow):=\{\,\oz\in\Om : z_i=w_i \quad \forall i=n,\ldots,m\,\},
$$
where $\ow\in\Om$ or $\ow=(w_n,\ldots,w_m)$ is a legal word in $\Om$.
A {\it subshift} of $\Om$ is  a closed $\si$-invariant subset of $\Om$.
If $Y\subset\Om$ is a subshift of $\Om$ we say that  
$\ow=(w_n,\ldots,w_m)$ is a legal word in $Y$ iff
 $C(n,m,\ow)\cap Y\ne\emptyset$. 
  
 Since the diagram
 $$
 \begin{CD}
 \Om @> \si >> \Om
 \\
 @V \Pi VV @VV \Pi V
 \\
 \cup\fT @> F >> \cup\fT
 \end{CD}
 $$
 commutes and $\Pi$ is continuous, the set 
 \begin{equation}
\label{subshY}
 Y:=\Pi^{-1}\big((\cup\fT)\cap\cA(L)\big)
  \end{equation}
  is a subshift of $\Om$ and
  \begin{equation}\label{YAL}
 \begin{CD}
 Y @> \si >> Y
 \\
 @V \Pi VV @VV \Pi V
 \\
 (\cup\fT)\cap\cA(L) @> F >> (\cup\fT)\cap \cA(L)
 \end{CD}  
  \end{equation}
 commutes.

\openup+1pt

\section{Approximation by periodic orbits.}\quad
\label{ssappperorb}

Let $\cP_L(T)$ be the set of  Borel invariant probabilities for $L$ which are
supported on a periodic orbit with period $\le T$. For $\mu\in\cP_L(T)$ 
write
$$
c(\mu,\cA(L)) :=\sup_{\theta\in\supp(\mu)} d(\theta,\cA(L)).
$$

\medskip

\begin{Proposition}\label{Pper}
Suppose that the Aubry set $\cA(L)$ is hyperbolic.
Then for all $\ell\in \na^+$ 
$$
\liminf_{T\to+\infty} \;T^\ell \left(\inf_{\mu\in\cP_L(T)} c(\mu,\cA(L))\right) = 0.
$$
\end{Proposition}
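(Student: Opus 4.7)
The plan follows Bressaud--Quas by applying Lemma~\ref{Lper} to increasingly fine subshift-of-finite-type approximations of the symbolic coding of the Aubry set built in the previous section, and translating the resulting short periodic points of the shift back into short periodic orbits of the Lagrangian flow.

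Let $Y := \Pi^{-1}((\cup\fT)\cap\cA(L))\subset\Om$ be the subshift coding $\cA(L)$ and let $\cL_N(Y)$ denote its set of legal length-$N$ words. For each $N\ge 1$ I would introduce
\[
Y^{(N)} := \{\,x\in\Om : \text{every length-}N\text{ subword of } x \text{ lies in } \cL_N(Y)\,\},
\]
an $(N{-}1)$-step subshift of finite type containing $Y$, with two key properties: (i) any $x\in Y^{(N)}$ has a centered length-$N$ window that agrees with some $y\in Y$, whence $d_a(x,y)\le a^{-\lfloor N/2\rfloor}$; (ii) the standard 1-step block presentation of $Y^{(N)}$ with blocks of length $N-1$ is topologically conjugate to $Y^{(N)}$, is a 1-step SFT on the alphabet $\cL_{N-1}(Y)$, and has topological entropy $h(Y^{(N)})$.

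Applying Lemma~\ref{Lper} to this 1-step presentation yields a $\si$-periodic point $x_N\in Y^{(N)}$ of period
\[
p_N \;\le\; 1+|\cL_{N-1}(Y)|\,e^{\,1-h(Y^{(N)})}.
\]
Via the commuting diagram~\eqref{YAL}, the orbit of $x_N$ projects to a periodic orbit of the Poincar\'e map $F$ and lifts to a periodic orbit $\mu_N$ of the Lagrangian flow with flow-period $T_N\le\alpha\,p_N$ (with $\alpha=\sup\tau$). Combining the Lipschitz property of $\Pi$ with a Gr\"onwall estimate on the return-time intervals of length $\le\alpha$, and using that $\cA(L)$ is flow-invariant, gives
\[
c(\mu_N,\cA(L))\;\le\;C\,a^{-N/2}
\]
for a constant $C$ depending only on $L$, $\Pi$ and the Markov partition.

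Finally, since $h(Y^{(N)})\downarrow h(Y)$ and $\tfrac1N\log|\cL_N(Y)|\to h(Y)$, one has $T_N\lesssim e^{Nh(Y)+o(N)}$ against $c(\mu_N,\cA(L))\lesssim a^{-N/2}$. When $h(Y)=0$ this immediately yields $T_N^\ell\,c(\mu_N,\cA(L))\to 0$ for every $\ell\in\mathbb{N}^+$, establishing the liminf. The main obstacle is the positive-entropy case $h(Y)>0$: the bare balance between the two exponential rates only covers $\ell<\log(a)/(2h(Y))$. To handle every $\ell$, I would iterate the construction on the residual SFT obtained by deleting from $\cL_{N-1}(Y)$ the symbols used by previously extracted periodic orbits (shrinking the effective alphabet); equivalently, exploit that the locally maximal hyperbolic hull of $\cA(L)$ has only finitely many periodic orbits of period $\le T_0$ for any fixed $T_0$, so a pigeonhole argument among the $\mu_N$ whose flow-period stays uniformly bounded forces infinitely many of them to coincide in a single periodic orbit, whose support--being a cluster point of supports with $c(\mu_N,\cA(L))\to 0$--must lie in $\cA(L)$ itself; then $\inf_{\mu\in\cP_L(T)}c(\mu,\cA(L))=0$ for all sufficiently large $T$ and the liminf is $0$ trivially.
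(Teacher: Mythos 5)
Your plan is right in outline—Bressaud–Quas applied to the Markov coding of $\cA(L)$—but the specific SFT approximation you build is the wrong one, and this is exactly what kills the argument in the positive-entropy case. You form $Y^{(N)}$ and pass to its $1$-step presentation with \emph{overlapping} $(N-1)$-blocks, i.e.\ still under the shift $\si$. Consequently its entropy is $h(Y^{(N)})\approx h_{\rm top}(Y)$, essentially a constant in $N$, while the alphabet has $|\cL_{N-1}(Y)|\approx e^{(N-1)h_{\rm top}(Y)}$ symbols; plugging into Lemma~\ref{Lper} you get a period bound of order $|\cL_{N-1}(Y)|\,e^{1-h_{\rm top}(Y)}$, which is exponential in $N$, and you correctly observe that this only yields the proposition for $\ell<\log a/(2h_{\rm top}(Y))$. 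The paper's $Z^{(n)}$ is different in the crucial way: its symbols are $n$-words with the transition $u\to v$ requiring the full \emph{concatenation} $uv$ (length $2n$, no overlap) to be $Y$-legal, so the shift on $Z^{(n)}$ is a model of $\si^n$ on $Y$, not $\si$. This forces $h_{\rm top}(Z^{(n)})\ge n\,h_{\rm top}(Y)$, and in Lemma~\ref{Lper} the factor $M\,e^{1-h}$ becomes $K_n\,e^{n h_{\rm top}(Y)}\cdot e^{1-n h_{\rm top}(Y)}=e\,K_n$: the exponentials cancel, leaving only the subexponential $K_n$. That cancellation is the entire point, and your overlapping-block presentation does not produce it.

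Neither of your two proposed patches repairs this. Deleting the handful of symbols used by previously extracted orbits from $\cL_{N-1}(Y)$ leaves an alphabet whose cardinality still grows like $e^{(N-1)h_{\rm top}(Y)}$, so the period bound stays exponential; worse, deleting symbols can only decrease the entropy of the residual SFT, making the factor $e^{1-h}$ larger. The pigeonhole argument is void: it presupposes infinitely many of the $\mu_N$ have uniformly bounded flow-period, but in your construction $T_N\lesssim \alpha\,p_N$ grows exponentially whenever $h_{\rm top}(Y)>0$, so the hypothesis never holds; moreover it would, if it applied, produce a periodic orbit contained in $\cA(L)$, which is a much stronger conclusion that need not hold for a general hyperbolic Aubry set. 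The fix is not to iterate or pigeonhole but simply to use the $\si^n$-coding (non-overlapping $n$-blocks with the $2n$-concatenation transition rule) from the start.
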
    

\begin{proof}\quad

For $n\in\na^+$ let $Z^{(n)}$ be the 1-step subshift of finite type
whose symbols are the legal words of size $n$ in $Y:=\Pi^{-1}(\cA(L))$.
A transition from the word 
\begin{equation}\label{concate}
\text{$u$ to $v$ is allowed in $Z^{(n)}$  iff the word $uv$
of size $2n$ is a legal word in $Y$.}
\end{equation}
 (Observe that this is not the standard
transition matrix for the $n$-word recoding of a subshift of finite type.) 
There is a natural semiconjugacy $Z^{(n)}\to Y$ from the shift of finite type 
$Z^{(n)}$ to $\si^n$ on $Y$, and hence the topological entropy of $Z^{(n)}$ 
exceeds $n \,h_{\text{top}}(Y)$. 
Let $h=\tfrac 1n h_{top}(Z^{(n)})\ge h_{top}(Y)$.
We have that 
\begin{align*}
\#\text{ symbols of $Z^{(n)}$}
= \#\text{ $n$-words in $Y$}
=\#\text{ $n$-cylinders in $Y$}
= K_n\, \ee^{n h_{top}(Y)}
\le K_n \,\ee^{nh},
\end{align*}
where $K_n$ has subexponential growth.
 By Lemma~\ref{Lper} there is a periodic orbit $\Ga_n$
in $Z^{(n)}$ with period at most $1+ K_n \ee^{nh} \ee^{1-nh}= 1+\ee\,K_n$.
Since each symbol in $Z^{(n)}$ corresponds to a word of length $n$ in the original
shift space $\Om=\Si(A)$, the periodic orbit $\Ga_n$ corresponds to a periodic orbit in $\Si(A)$
of period at most 
$$
P(n):=n\, (1+\ee\, K_n),
$$
which has subexponential growth.

We claim that any $n$-word in the projection of $\Ga_n$
 is a legal $n$-word in $Y$.
For this, observe that if the word is a symbol of $Z^{(n)}$ then it is a
legal $n$-word in $Y$ by the definition of $Z^{(n)}$. If the $n$-word is 
inside a concatenation of two symbols of $Z^{(n)}$, the transition 
rule~\eqref{concate} defining $Z^{(n)}$ implies that it is a legal 
$n$-word in $Y$. It follows, using the metric~\eqref{da}, and identifying $\Ga_n$
with its projection to $\Si(A)$, that
\begin{equation}\label{dgay}
c(\Ga_n,Y):=\sup_{\ow\in\Ga_n}d_a(\ow,Y) \le a^{-n}.
\end{equation}

Recall that the return time $\tau$ to $\cup\fT$ and the ceiling function
in $S(\Om,\otau)$ are bounded above by $\a$.
Let $B>0$ be such that 
\begin{equation}\label{blipphi}
\tt_1,\tt_2\in\La, \quad  |t|\le \a 
\quad\then \quad d(\vr_t(\tt_1),\vr_t(\tt_2))< B\, d(\tt_1,\tt_2).
\end{equation}
Let $\De_n:=\vr_{\re}(\Pi(\Ga_n))=\vr_{[0,\a]}(\Pi(\Ga_n))
\subset \La=\Pi(S(\Om,\otau))$ 
be the periodic orbit in $\La$ which corresponds to $\Ga_n$.
Recall that $\Pi$ is Lipschitz \cite[p. 367]{FH} with the metric~\eqref{da}.
Since $\cA(L)=\vr_{[0,\a]}(\Pi(Y))$, from~\eqref{dgay} and~\eqref{blipphi}
we get that 
\begin{equation}\label{deDen}
c(\De_n,\cA(L))=\sup_{\tt\in\De_n}d(\tt,\cA(L))
\le B\, \Lip(\Pi)\, a^{-n}.
\end{equation}
The period $T(\De_n)$ of $\De_n$ is bounded by
\begin{equation}\label{TDen}
T(\De_n)\le \a\, P(n).
\end{equation}
Since $P(n)$ has subexponential growth from 
\eqref{TDen} and \eqref{deDen} we have that
\begin{align*}
\liminf_{T\to+\infty} \;T^\ell \Big[\inf_{\mu\in\cP_L(T)} c(\mu,\cA(L))\Big] 
\le \liminf_n T(\De_n)^\ell \, c(\De_n,\cA(L))
=0.
\end{align*}
\end{proof}

\medskip

\begin{Corollary}\label{cmn}
Suppose that the Aubry set $\cA(L)$ is hyperbolic. 
There is   a sequence
of periodic orbits $\mu_n$ with periods $T_n$ and $m_n\in\na^+$, $m_n>n$,
  such that 
for any $0<\be<1$  and any
$k\in\na^+$
\begin{equation}\label{ecmn}
\int d\big(\tt,\cA(L)\big)\,d\mu_n(\tt) = o(\be^{k\, m_n}) 
\qquad {\rm and} \qquad
\lim_n \frac{\log T_n}{m_n}=0.
\end{equation}
\end{Corollary}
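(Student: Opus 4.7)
The plan is to reindex the sequence $\{\De_n\}$ built in the proof of Proposition~\ref{Pper} and pair each chosen element with a suitable auxiliary integer $m_n$. That construction yields, for every $n\in\na^+$, a periodic orbit $\De_n$ whose period satisfies $T(\De_n)\le \a\,P(n)$ with $P(n)=n(1+\ee K_n)$ subexponential, and whose sup-distance to the Aubry set satisfies $c(\De_n,\cA(L))\le B\,\Lip(\Pi)\,a^{-n}$. So the distance decays geometrically in the index while the period grows only subexponentially; in particular $\log P(j)/j\to 0$.

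The main obstacle is balancing the two estimates in~\eqref{ecmn}. Asking $c(\De_{j_n},\cA(L))=o(\be^{k m_n})$ for every $\be\in(0,1)$ and $k\in\na^+$ forces $j_n/m_n\to\infty$, while $\log T_n/m_n\to 0$ forces $\log P(j_n)$ to be much smaller than $m_n$, so $m_n$ cannot trail $j_n$ by too much. The two demands are compatible exactly because $\log P$ grows strictly subexponentially, and the work is to exploit this slack by choosing the rates carefully.

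Concretely I would pick $j_n\ge n^2$ large enough that $\log P(j_n)\le j_n/n^2$ and set $\mu_n:=\De_{j_n}$, $T_n:=T(\De_{j_n})$, $m_n:=\lfloor j_n/n\rfloor$. Then $m_n>n$, and $\log T_n/m_n\le(\log\a+j_n/n^2)/(j_n/n-1)\to 0$ gives the second equality of~\eqref{ecmn}. For the first, since $\mu_n$ is supported on $\De_{j_n}$ we have $\int d(\tt,\cA(L))\,d\mu_n\le B\,\Lip(\Pi)\,a^{-j_n}$, and using $m_n/j_n\le 1/n$,
\[
\frac{a^{-j_n}}{\be^{k m_n}}=\exp\!\bigl(j_n\bigl(-\log a+k\log(1/\be)\,m_n/j_n\bigr)\bigr)\le \exp\!\bigl(-\tfrac12 j_n\log a\bigr)\longrightarrow 0
\]
as soon as $n>2k\log(1/\be)/\log a$. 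This yields~\eqref{ecmn}; no further ingredient beyond the rate information extracted in the proof of Proposition~\ref{Pper} is needed.
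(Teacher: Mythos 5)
Your proof is correct and works, but it follows a different bookkeeping route than the paper. The paper treats Proposition~\ref{Pper} as a black box: it takes a sequence $\mu_n$ with $T_n^\ell\int d(\tt,\cA(L))\,d\mu_n\to 0$ for every $\ell$, defines the auxiliary quantity $r_n=\log_\be\bigl(\int d(\tt,\cA(L))\,d\mu_n\bigr)$, shows $r_n^{-1}\log T_n\to 0$ from the polynomial bounds, and then sets $m_n=\lfloor r_n/2\rfloor$, after first reducing to the case $k=1$. You instead reopen the proof of Proposition~\ref{Pper} and reuse the \emph{geometric} decay $c(\De_j,\cA(L))\le B\,\Lip(\Pi)\,a^{-j}$ together with the subexponential period bound $T(\De_j)\le\a P(j)$, then tune a subsequence $j_n$ and put $m_n=\lfloor j_n/n\rfloor$. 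The gain is that your $m_n$ is defined entirely in terms of $j_n$ and $n$, so it is manifestly independent of $\be$ and $k$, which matches the quantifier order in the statement without any further remark; the paper's $r_n$ depends on $\be$, so there one has to notice that a single fixed base suffices after the $k$-reduction. One trivial repair: $j_n\ge n^2$ only yields $m_n=\lfloor j_n/n\rfloor\ge n$, so take $j_n>n(n+1)$ (or just say ``$j_n$ sufficiently large'') to secure the strict inequality $m_n>n$. With that adjustment the argument is complete, and all the required rate information is indeed already in the proof of Proposition~\ref{Pper}, as you note.
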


\begin{proof}
Observe that it is enough to prove the Lemma for $k=1$.
By Proposition~\ref{Pper} there is a sequence of periodic orbits $\mu_n$ 
with periods $T_n\to\infty$ such that for any $\ell\in\na^+$
$$
\lim_n T_n^\ell \left(\int d(\tt,\cA(L))\; d\mu_n(\tt)\right) =0.
$$
Let 
$$
r_n:= \log_\be\left( \int d(\tt,\cA(L))\, d\mu_n(\tt)\right).
$$
Since 
$$
\be^{r_n}\le T^\ell_n \,\be^{r_n}\le 1
\qquad \Longleftrightarrow \qquad
-\frac 1{\ell}\le \frac{\log_\be T_n}{r_n} \le 0
$$
we have that $r_n^{-1}\log T_n \to0$.
Define $m_n:=\lfloor \tfrac 12 r_n\rfloor$, then $m_n^{-1}\log T_n \to0$ and
$$
\int d(\tt,\cA(L))\, d\mu_n(\tt) =\be^{r_n}\le \be^{m_n+\frac 12 r_n}=o(\be^{m_n}).
$$
\end{proof}

\medskip

 \begin{Lemma}\label{A.1}
  Let $a_1,\ldots, a_n$ be non-negative real numbers, and let $A=\sum_{i=1}^n a_i\ge 0$.
  Then
  $$
  \sum_{i=1}^n -a_i\log a_i \le 1 + A\,\log n,
  $$
  where we use the convention $0\, \log 0 = 0$.
  Moreover,
    $$
  \text{if  $A=1$ then \qquad}\sum_{i=1}^n -a_i\,\log a_i \le \log n.
  $$
  \end{Lemma}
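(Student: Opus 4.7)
The plan is to first establish the normalized (probability) case $A=1$ and then reduce the general case to it via scaling.

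For the $A=1$ case I will use the concavity of the logarithm (equivalently, Gibbs' inequality against the uniform distribution). Writing
\[
\sum_{i=1}^n -a_i\log a_i = \sum_{a_i>0} a_i\,\log\tfrac{1}{a_i},
\]
Jensen's inequality applied to the concave function $\log$ with weights $a_i$ and values $1/a_i$ gives
\[
\sum_{a_i>0} a_i\,\log\tfrac{1}{a_i}\;\le\;\log\Bigl(\sum_{a_i>0} a_i\cdot\tfrac{1}{a_i}\Bigr)\;\le\;\log n,
\]
since the number of non-zero $a_i$'s is at most $n$. This gives the ``moreover'' statement.

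For the general case, if $A=0$ then every $a_i=0$ and the inequality is trivial. If $A>0$, set $b_i:=a_i/A$, so that $(b_i)$ is a probability vector on (at most) $n$ atoms. A direct computation gives
\[
\sum_{i=1}^n -a_i\log a_i
=\sum_{i=1}^n -A b_i\bigl(\log A+\log b_i\bigr)
=-A\log A\;+\;A\sum_{i=1}^n -b_i\log b_i.
\]
Applying the already-proved probability case to $(b_i)$ yields
\[
\sum_{i=1}^n -a_i\log a_i \;\le\; -A\log A\;+\;A\log n.
\]

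It remains to show $-A\log A\le 1$ for every $A\ge 0$, which will finish the proof. This is elementary calculus: the function $f(A)=-A\log A$ (with $f(0)=0$) has derivative $-\log A-1$, so its maximum on $[0,\infty)$ occurs at $A=e^{-1}$, with value $f(e^{-1})=e^{-1}<1$. (For $A\ge 1$ we even have $f(A)\le 0$.) There is no real obstacle here; the only point requiring a moment of care is the case split $A\le 1$ vs.\ $A>1$ when bounding $-A\log A$, and this is handled uniformly by the above calculus argument.
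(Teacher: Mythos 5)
Your proof is correct and arrives at the same intermediate inequality $\sum_i -a_i\log a_i \le -A\log A + A\log n$ as the paper, finishing with the same elementary bound $-A\log A \le 1$. The only difference is organizational: the paper applies Jensen's inequality once, directly to the concave function $x\mapsto -x\log x$ with uniform weights $1/n$, whereas you normalize to the probability vector $b_i=a_i/A$ and apply Jensen to $\log$ with weights $b_i$ — the two are equivalent reformulations of the same concavity argument.
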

  
  \begin{proof}
  Applying Jensen's inequality to the concave function $x\mapsto -x \log x$ yields
  $$
  \frac 1n \sum_{i=1}^n-a_i\log a_i 
  \le -\left(\frac 1n \sum_{i=1}^n a_i\right)
  \log\left(\frac 1n \sum_{i=1}^n a_i\right)
  =-\frac An\,\log A + \frac An\, \log n
  $$
  from which the result follows.
  When $A=1$ use that $1\cdot\log 1=0$ in the previous inequality.
 \end{proof}

 \medskip

 Recall that  $\cM_{\text{inv}}(L)$ is the set of Borel probabilities in $TM$ invariant under the 
 Lagrangian flow.

 \medskip                     
 
 \begin{Lemma}\label{LA2}\quad
 
 Let  $L$ be a Tonelli Lagrangian and $e>c(L_0)$.
  There is  $B=B(L,e)>0$ 
 such that for every 
 $\nu\in \cM_{\text{\rm inv}}(L)$ with $\supp(\nu)\subset [\text{\sl Energy}(L)<e]$,
 $$
 \int L\, d\nu \le -c(L) + B(L,e)\int d\big(\tt,\cA(L)\big)\; d\nu(\tt).
 $$
 \end{Lemma}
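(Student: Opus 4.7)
The strategy is to use a smooth critical subsolution to decompose the ``excess action'' of $\nu$ over $-c(L)$ into a nonnegative Lagrangian density that vanishes on the Aubry set, and then bound that density by its distance to $\cA(L)$ using Lipschitz continuity on a compact energy sublevel.

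First I would invoke the existence of a $C^{1,1}$ critical subsolution $u:M\to\re$ of the Hamilton--Jacobi equation (Fathi--Siconolfi / Bernard). This gives the pointwise Fenchel inequality
$$
L(x,v)+c(L)\;\ge\; du_x\cdot v \qquad \forall (x,v)\in TM,
$$
with equality on $\cA(L)$. The inequality is Legendre duality applied to $H(x,du_x)\le c(L)$. For the equality on $\cA(L)$: if $\ga$ is static, then for all small $t>0$ we have both $u(\ga(t))-u(\ga(0))\le\Phi_{c(L)}(\ga(0),\ga(t))=\int_0^t[L+c(L)]$ and $u(\ga(0))-u(\ga(-t))\le\Phi_{c(L)}(\ga(-t),\ga(0))=\int_{-t}^0[L+c(L)]$; the static identity $\Phi_{c(L)}(\ga(0),\ga(-t))=-\int_{-t}^0[L+c(L)]$ reverses the second inequality, so that dividing by $t$ and letting $t\to 0^+$ squeezes $du_x\cdot v$ to equal $L(x,v)+c(L)$ on $\cA(L)$.

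Next, define the density
$$
F(x,v) := L(x,v)+c(L) - du_x\cdot v,
$$
so that $F\ge 0$ on $TM$ and $F\equiv 0$ on $\cA(L)$. For any $\nu\in\cM_{\text{inv}}(L)$, flow-invariance together with $u\in C^1$ yields
$$
\int du_x\cdot v\;d\nu(x,v)=\tfrac{d}{dt}\Big|_{t=0}\int u(\pi\vr_t\tt)\,d\nu(\tt)=0,
$$
so
$$
\int L\,d\nu + c(L) \;=\; \int F\,d\nu.
$$

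Finally, by the superlinearity of $L$ the set $K_e:=\{(x,v)\in TM:E_L(x,v)\le e\}$ is compact, and $F$ is Lipschitz on $K_e$ because $L\in C^2$ and $du$ is Lipschitz (being the gradient of a $C^{1,1}$ function). Since $F$ vanishes on $\cA(L)\subset[E_L=c(L)]\subset K_e$, there is a constant $B=B(L,e)>0$ with
$$
F(\tt)\le B\,d(\tt,\cA(L)) \qquad \forall\,\tt\in K_e.
$$
Integrating against $\nu$, whose support lies in $K_e$, gives the claim.

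The principal ingredient, and thus the only real obstacle, is invoking the existence of a $C^{1,1}$ critical subsolution; everything else is a Gr\"onwall-type Lipschitz estimate on the compact energy sublevel. If one wished to avoid the $C^{1,1}$ result, a Lipschitz weak KAM subsolution combined with Rademacher plus the fact that invariant measures do not charge the singular set of $u$ (via closedness of $\nu$) would work equally well, but the $C^{1,1}$ route is cleaner.
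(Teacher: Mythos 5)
Your proposal is correct and follows essentially the same route as the paper's proof: both rely on Bernard's $C^{1,1}$ critical subsolution to form the nonnegative density $L+c(L)-du_x\cdot v$, observe that it vanishes on $\cA(L)$, show $\int du\,d\nu=0$ for invariant measures (you by differentiating under the integral, the paper via Birkhoff), and then exploit Lipschitz continuity on the compact sublevel $[E_L\le e]$. The only cosmetic differences are that you spell out the Aubry-set equality via the static identity where the paper cites equation~\eqref{aubry}, and you use a derivative-at-zero argument rather than Birkhoff averaging for the closedness of $\nu$.
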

 
 \begin{proof}
 From Bernard~\cite{Be3} after Fathi and Siconolfi~\cite{FaSi} 
 we know that there is a critical
 subsolution $u$ of the Hamilton-Jacobi equation  for the Hamiltonian $H$ of $L$, i.e.
  \begin{equation}\label{sHJ}
 H(x,d_xu)\le c(L),
 \end{equation}
  which is
 $C^{1}$ with Lipschitz derivatives. Let 
 $$
 \L(x,v):= L(x,v)+c(L)-d_xu(v).
 $$
 Inequality~\eqref{sHJ} implies that $\L\ge 0$. 
 Also  from
   \eqref{aubry}, $\L|_{\cA(L)}\equiv 0$. 
 The Aubry set $\cA(L)$ is included in the 
 energy level $c(L)$ (e.g. Ma\~n\'e~\cite[p. 146]{Ma7}), hence it is compact.
There is a Lipschitz constant $B$ for the function $\L$ on the convex $[E(L)<e]$:
 $$
 \forall\tt\in [E(L)<e]\qquad \L(\tt) \le 0 + B\, d(\tt,\cA(L)) .
 $$ 
 By Birkhoff ergodic theorem every invariant probability is closed:
 \begin{align*}
 \int du(x,v) \;d\nu &
 = \int\left[ \lim_{T\to\infty}\frac 1T\int_0^T du(\vr_t(\tt))\; dt\right] d\nu(\tt)
 \\
 &=\int \lim_{T\to\infty}\left[\frac{u(\pi(\vr_T(\tt)))-u(\pi(\tt))}T\right] d\nu(\tt) =0.
 \end{align*}
 Therefore
 $$
 \int \big(L+c(L)\big) \;d\nu =\int\L\,d\nu \le B \int d(\tt,\cA(L))\;d\nu.
 $$
 \end{proof}

\bigskip

\medskip
\begin{Lemma}\label{partAB}\quad

Let $N$ be a compact riemannian manifold and $\mu$ a 
Borel probability on $N$.

Given $h>0$ there exists a finite Borel partition 
$\A=\{A_1,\ldots, A_r\}$ of $N$ with the following properties:
\begin{enumerate}
\item\label{partAB1} $ \diam\A< h$,
\item\label{partAB2} $\forall A\in\A\quad\mu(\partial A)=0$,
\item\label{partAB3} $\forall\e>0$ \; $\forall A_i\in\A$ \; $\exists B_i\subset A_i$ such that 
$B_i$ is compact, $\mu(\partial B_i)=0$, $\mu(A_i\setminus B_i)<\e$. 
\end{enumerate}

\begin{proof}
We first show that
for any $x\in N$ there is a ball $B(x,r)$, $r<\tfrac 12 h$ such that $\mu(\partial B(x,r))=0$.
Indeed if $h$ is small the sets $F_r:=\partial B(x,r)$, $0<r<h$ are disjoint. Since $\mu$ is
finite, at most a countable number of the sets $F_r$ can have positive measure.   
Let $\cO=\{U_i\}_{i=1}^m$ be a finite cover of $N$ by open balls with $\mu(\partial U_i)=0$, 
$\diam U_i<h$ and such that $U_i\setminus\cup_{j\ne i}\ov{U_j}\ne \emptyset$. 
Define inductively $A_1:=U_1$, $A_{i+1}:=U_{i+1}\setminus\cup_{j\le i}U_j$. Then 
$\ov{A_i}=\ov{\intt A_i}$ and 
$\A:=\{A_i\}_{i=1}^m$ is a Borel partition of $N$ 
satisfying~\eqref{partAB1} and~\eqref{partAB2}.

For $r>0$ small let 
$$
B_i(r):=\{\,x\in A_i\;:\; d(x,A_i^c)\ge r\,\}.
$$
We have that $B_i(r)$ is compact and $B_i(r)\uparrow \intt A_i$. 
Thus $\lim_{r\to 0}\mu(B_i(r))=\mu(\intt A_i)=\mu(A_i)$. Also $\partial B_i(r_1)\cap\partial B_i(r_2)
=\emptyset$ if $r_1\ne r_2$ because
$$
\partial B_i(r)=\{x\in A_i\;:\;d(x,A^c)=r\}.
$$ 
Therefore there is $r_i>0$ such that $B_i:=B_i(r_i)$ satisfies
$\mu(A_i\setminus B_i)<\e$ and $\mu(\partial B_i)=0$.

\end{proof}

\end{Lemma}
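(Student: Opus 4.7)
The plan is to establish properties (1) and (2) by a standard cover-and-disjointify argument with carefully chosen ball radii, and then to extract the compact subsets $B_i$ required by (3) by eroding each cell $A_i$ inward, using the finite-mass property of $\mu$ once more to ensure the eroded boundaries remain $\mu$-null.

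For (1) and (2), I would first fix $x\in N$ and note that the spheres $\partial B(x,r)$, $0<r<h/2$, form a pairwise disjoint family. Since $\mu$ is a finite measure, at most countably many of these spheres can carry positive mass, so at each point of $N$ there are arbitrarily small radii $r$ with $\mu(\partial B(x,r))=0$ and $\diam B(x,r)<h$. By compactness of $N$, extract a finite cover $\{U_1,\dots,U_m\}$ by such open balls. Now set $A_1:=U_1$ and inductively $A_{i+1}:=U_{i+1}\setminus(U_1\cup\cdots\cup U_i)$; this produces a Borel partition $\A=\{A_1,\ldots,A_m\}$ whose boundaries satisfy $\partial A_i\subset \bigcup_j \partial U_j$, which is $\mu$-null, and whose diameters are below $h$.

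For (3), fix $A_i$ and define, for $r>0$, the inner-parallel set $B_i(r):=\{x\in A_i:d(x,A_i^c)\ge r\}$. Each $B_i(r)$ is a closed subset of the compact manifold $N$, hence compact. As $r\downarrow 0$ the sets $B_i(r)$ increase to $\intt A_i$, and property (2) forces $\mu(\intt A_i)=\mu(A_i)$, so $\mu(B_i(r))\uparrow \mu(A_i)$ and in particular $\mu(A_i\setminus B_i(r))<\e$ for all $r$ sufficiently small. The remaining point is that the boundaries $\partial B_i(r)=\{x\in A_i:d(x,A_i^c)=r\}$ for different $r>0$ are disjoint level sets, so only countably many of them can carry positive $\mu$-mass. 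Refining the choice of $r$ to avoid this countable set gives $\mu(\partial B_i(r))=0$ simultaneously with $\mu(A_i\setminus B_i(r))<\e$, and setting $B_i:=B_i(r)$ completes (3).

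The only nontrivial step is (3), and its heart is the same finite-mass trick used twice: a one-parameter family of pairwise disjoint ``shells'' on a finite measure space admits arbitrarily small parameter values with null mass. Once that observation is in hand, together with the containment $\partial A_i\subset \bigcup_j\partial U_j$ exploited in (1)--(2), the remainder is essentially bookkeeping.
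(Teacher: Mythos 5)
Your proof is correct and follows essentially the same route as the paper's: choose balls with $\mu$-null spheres via the countably-many-positive-mass-levels argument, disjointify to get $\A$, and then use the one-parameter family of inner-parallel sets $B_i(r)$ with the same finite-measure trick to select an erosion radius whose boundary is $\mu$-null. The only cosmetic difference is that the paper additionally arranges $U_i\setminus\bigcup_{j\ne i}\ov{U_j}\ne\emptyset$ so that no cell $A_i$ degenerates, a technicality you could equally handle by discarding empty cells.
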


\bigskip
\section{Uniform h-expansivity of hyperbolic Ma\~n\'e sets.}
\label{suexpan}
\quad

In order to use Theorem~\ref{ESS} from appendix~\ref{AE} we need to establish
the uniform h-expansivity of our hyperbolic Ma\~n\'e sets.
Recall from Definition~\ref{hexpan} that a homeomorphism $f:X\to X$ is
{\it entropy expansive} or {\it h-expansive} if there is $\e>0$ such that 
\begin{gather}
\forall x\in X \qquad h_{\rm top}(\Ga_\e(x,f), f)=0,  \qquad \text{where}
\notag\\
\Ga_\e(x,f):=\{ y\in X \; |\; \forall n\in \Z \quad d(f^n(x),f^n(y))\le \e\;\}.
\label{Gae}
\end{gather}

Also from Definition~\ref{duhe}, given $\cU\subset C^0(X,X)$ and $Y\subset X$ compact;
we say that $\cU$ is {\it uniformly h-expansive} on $Y$ if there is $\e>0$ such that 
$$
\forall f\in \cU\quad \forall y\in Y\qquad h_{\rm top}(\Ga_\e(y,f),f)=0.
$$

In section~\ref{sssenco} we use $f=\vr_1^{L+\phi}|_{\E_\phi\cap U}$, 
the time 1 map of the lagrangian flow restricted to a neighbourhood $U$ of the
hyperbolic Ma\~n\'e set. No Markov partition is used in section~\ref{sssenco}.
For a hyperbolic flow the set~\eqref{Gae} is an orbit segment of length $2 L\e$
if $\e$ is smaller than its expansivity constant and $L>0$ is a constant,
using the
expansivity definition \cite[Def. 1.7.2]{FH}, the Shadowing Lemma \cite[Thm. 5.3.3]{FH},
and the expansivity of hyperbolic sets \cite[Cor. 5.3.5]{FH}.

The expansivity of a hyperbolic set extends to a neighborhood of the hyperbolic set,
see Corollary~5.3.5 in \cite{FH}.  Also the hyperbolicity of an invariant set $\La$ extends to
the maximal invariant set $\La^U_\phi =\bigcap_{t\in\re}\phi_t(\ov U)$ for $C^1$ perturbations
of the flow $\vr$, see Proposition~5.1.8 in \cite{FH} on persistence of hyperbolicity. 
The $C^1$ continuity of the lagrangian flow restricted to the energy level is
proved in Appendix~\ref{asst} and stated in Remark~\ref{remell}.

Then the upper semicontinuity of  the Ma\~n\'e set \cite[Lemma~5.2]{CP} implies that 
$C^2$ perturbations of the Lagrangian have hyperbolic Ma\~n\'e sets in a neighborhood 
of the original Ma\~n\'e set. They inherit the expansivity. The uniform h-expansivity amounts 
to obtain a uniform expansivity constant $\e$ and a uniform shadowing Lipschitz constant $L$
(cf.~\cite[Def. 5.3.1]{FH}, \cite[Thm. 5.3.3]{FH}). These constants depend on the constants of
the hyperbolicity definition that can be taken uniformly on pertubations 
(see the proof of the persistence of hyperbolicity in Proposition~5.1.8 \cite{FH}).
Or use Remark~\ref{rue} or Corollary~\ref{Rfe} in appendix~\ref{asha}.

The careful reader will find uniform proofs of the shadowing lemmas 
for hyperbolic flows in appendix~\ref{asha} and uniform entropy 
expansivity in Remark~\ref{rue} and Corollary~\ref{Rfe}. 
The continuity in the $C^{1}$ topology 
of the lagrangian vector field in the critical energy level $c(L)$
under perturbations of the potential $\phi$ is proved in appendices
\ref{asst} and \ref{ashms} and finally stated in Remark~\ref{remell}.

\bigskip

\section{Small entropy nearby closed orbits.}\quad
\label{sssenco}

Recall that $\cM_{\text{inv}}(L)$ is the set of  Borel probabilities in $TM$ which 
are invariant under the Lagrangian flow and $\cM_{\min}(L)$ is the set 
of minimizing measures~\eqref{Mmin}.
For $\mu\in\cM_{\text{inv}}(L)$ denote by $h(L,\mu)$ the entropy of $\mu$ under the Lagrangian flow $\vr_t$.

\medskip

\noindent{\bf Proof of Theorem~\ref{Tzeroentropy}:}

For $\ga>0$ write
\begin{align*}
\cH^k(L_0):&=\{\,\phi\in C^k(M,\re)\;|\; \mN(L_0+\phi) \text{ is hyperbolic }\},
\\
\cE_\ga:&=\{\,\phi\in \cH^k(L_0)\;|\; 
 \forall \mu\in\cM_{\min}(L_0+\phi)
\quad  h(L_0+\phi,\mu) < \ga  \;\}.
\end{align*}

The upper semicontinuity of the Ma\~n\'e set (cf. \cite[Lemma~5.2]{CP}) 
and the persistence of hyperbolicity \cite[Prop. 5.1.8]{FH}
imply that the set $\cH^k(L_0)$ is open.
As an open subset $\cH^k(L_0)\subset C^k(M,\re)$ of a complete metric
space we have that $\cH^k(L_0)$ is a Baire space.

It is enough to prove that for every $\ga>0$ the set $\cE_\ga$ is  open and dense in $\cH^k(L_0)$ 
because in that case using  Corollary~\ref{cecmm} and the variational principle for the entropy
 \cite[Cor. 4.3.9]{FH}, 
 we have that
\begin{align*}
\bigcap\limits_{n\in\na^+}\cE_{\frac 1n}
&=\big\{\,\phi\in\cH^k(L_0)\;\big|\;\forall \mu\in
\cM_{\min}(L_0+\phi)
\quad 
h(L_0+\phi,\mu)=0\,\big\}
\\ 
&=\big\{\, \phi\in\cH^k(L_0)\;\big|\; \mN(L_0+\phi) \text{ has zero topological entropy}\,\big\}
\end{align*}
is a residual set.

\medskip

{\it Step 1.} {\sl $\cE_\ga$ is $C^k$ open.}

The map $C^k(M,\re)\ni\phi\mapsto\mN(L_0+\phi)$ is upper semicontinuous 
(cf. \cite[Lemma~5.2]{CP}). See also Remark~\ref{remell} for the 
continuity of the lagrangian vector field restricted to the critical energy level
in the $C^{k-1}$ topology.\footnote{In this argument we only need this for $k=2$.}

Therefore, given $\phi_0\in\cH^k(L_0)$, there are neighbourhoods 
$U$ of $\mN(L_0+\phi_0)$ in $TM$
and $\cU$ of $\phi_0$ in $C^k(M,\re)$ 
such that for any $\phi\in\cU$, $\mN(L_0+\phi)\subset U$ and
$\mN(L_0+\phi)$ is a hyperbolic set for $\vr^{L_0+\phi}_t$ 
restricted to
$\E_\phi:=E^{-1}_{L+\phi}\{c(L+\phi)\}$.
Moreover, 
by the persistence of hyperbolicity \cite[Prop. 5.1.8]{FH},
we can choose $\cU$ and $U$ such that for every $\phi\in\cU$
the lagrangian flow $\vr^{L_0+\phi}_t$ of $L_0+\phi$ is hyperbolic in the maximal invariant
subset of $\ov U$ and by definition \ref{hexpan}, and section~\ref{suexpan},
the set of maps   
$\{\,\vr^{L_0+\phi}_1|_{\E_\phi\cap\ov U}\,:\,\phi\in\cU\,\}$ 
is a
uniformly h-expansive family on $\ov U$.
In particular $\cH^k(L_0)$ is open in $C^k(M,\re)$.

Consider the subset $\cS\subset C^k(M,\re)$ of potentials $\phi$
for which $\mN(L_0+\phi)$
contains a singular point for the Lagrangian flow. By \cite[Theorem~C]{CP}
there is an open and dense subset $\cO_1\subset \cS$ such that 
if $\phi\in\cO_1$ then $\mN(L_0+\phi)$ is a single singularity of $\vr^{L_0+\phi}$.
Then by \cite[Theorem~D]{ham} generically this singularity is hyperbolic,
i.e. $\cO_1\cap\cH^k(L_0)$ is open and dense in $\cO_1$.

So we can restrict our arguments to Ma\~n\'e sets without singularities.
In this case by Corollary~\ref{CSH1} 
we can identify the energy level
$\E_\phi\supset\mN(L_0+\phi)$ with the unit tangent bundle $SM$ by the
radial projection.

Suppose that $\phi_n \in \cH^k(L_0)\setminus \cE_\ga$, $\phi_0\in \cH^k(L_0)$
and $\lim_n\phi_n=\phi_0$ in $C^k(M,\re)$.  Then there are 
$\nu_n\in\cM_{\min}(L_0+\phi_n)$ with $h(L_0+\phi_n,\nu_n)\ge \ga$.
The map $\phi\mapsto c(L_0+\phi)$ is continuous (cf. \cite[Lemma~5.1]{CP})
and $\supp\nu_n$ is in the energy level 
$\E_{\phi_n}=E_{L_0+\phi_n}^{-1}\{c(L_0+\phi_n)\}$
(cf. Carneiro~\cite{Carneiro}). Thus we can assume that all the probabilities
$\nu_n$ are supported on a fixed compact subset $\K$ of $TM$.
 Taking a subsequence if necessary we can assume that 
 $\nu_n\to\nu\in \cM(L_0+\phi_0)$ in the weak* topology.

 The map $(\mu,\phi)\mapsto \int (L_0+\phi)\,d\mu$ is continuous 
 with respect to  $\lV \phi\rV_{\sup}$ and to the weak*
 topology on the set of Borel probabilities on $\K$.
 Also the map $\phi\mapsto c(L_0+\phi)$ is 
 continuous with respect to $\lV \phi\rV_{\sup}$ 
(cf. \cite[Lemma~5.1]{CP}). Using that [see eq.~\eqref{minmeas}]
$$
c(L_0+\phi) = -\min_{\mu\in\cM(L_0+\phi)}\int \bigl(L_0+\phi\big)\,d\mu,
$$
we obtain that the limit $\nu\in\cM_{\min}(L_0+\phi_0)$.

  By Corollary~\ref{CSH1} we can identify the energy levels $\E_{\phi_n}$ with the unit 
tangent bundle $SM$ under the radial projection 
$R(\phi_n):\E_{\phi_n}\to SM$. 
Since $\phi\mapsto c(L_0+\phi)$ is continuous, we have that 
the projected lagrangian vector fields 
$R(\phi_n)_*X(L_0+\phi_n)|_{\E_{\phi_n}}\to R(\phi_0)_*X(L_0+\phi_0)\vert_{\E_{\phi_0}}$
converge in the $C^{k-1}$ topology on $SM$ (see Remark~\ref{remell}).

By Corollary~\ref{cecmm} we have that $\supp \nu_n\subset\ov U$.
By section~\ref{suexpan} or Remark~\ref{rue}, the family of conjugated  lagrangian flows 
$\psi^n_t := R(\phi_n)\circ\vr^{L+\phi_n}_t\circ R(\phi_n)^{-1}$, $n\ge 0$
in $SM$ is 
uniformly h-expansive on their maximal invariant sets of
$R(\ov U)$.
Applying Theorem~\ref{ESS} we get that
$$
\ga\le \limsup_n h({L_0+\phi_n},\nu_n)\le h({L_0+\phi_0},\nu).
$$
Therefore $\cH^k(L_0)\setminus\cE_\ga$ is relatively closed in 
(the open set) $\cH^k(L_0)$
and hence $\cE_\ga$ is open in $C^k(M,\re)$.

\bigskip

\pagebreak

{\it Step 2.} {\sl Density.}

We have to prove that $\cE_\ga$ intersects every non-empty
open subset of $\cH^k(L)$.
Let 
\linebreak
$\cU_1\subset \cH^k(L)$ be open and non-empty. 
 By Ma\~n\'e~\cite[Thm.~C.(a)]{Ma6} there is $\phi_0\in\cU_1$ such that  
 $\#\cM_{\min}(L_0+\phi_0)=1$.
 Write 
 \begin{equation}\label{LL0p0}
 L:=L_0+\phi_0.
 \end{equation}

Let $m_n$, $T_n$, $\mu_n$ be given by Corollary~\ref{cmn} for $L$.
Let $\Ga_n:=\supp\mu_n$ and let $\Ga_n(t)$ be the associated periodic orbit
with its parametrization. Corollary~\ref{cmn} and Lemma~\ref{LA2} are proven
for a single lagrangian. The reader can check that in the following proof they
are only applied to the lagrangian $L$ in~\eqref{LL0p0}.

 For $\phi\in C^k(M,\re)$ near $0$ write
$\E_\phi:=E_{L+\phi}^{-1}\{c(L+\phi)\}$.

\refstepcounter{Thm}
\begin{Claim}\label{c1}\quad

There are $0<\be<1$, 
$N^1_\ga>0$, $C(\cU_1,\be)>0$
 and a neighbourhood 
$0\in\cU_2\subset\cU_1-\phi_0$ such that if 
$n>N^1_\ga$, $\phi\in\cU_2$, 
$\mu\in\cM_{\min}(L+\phi)$,
$\Ga_n(t)$ is also a periodic orbit for $L+\phi$ and
$h(\mu)> 3 \ga \;C(\cU_1,\be)$ then 
\begin{equation*}\label{mu>ga}
\mu(\{\,\tt\in \E_\phi \;|\; d(\tt,\Ga_n)\ge \be^{m_n}\,\})>\ga.
\end{equation*}
\end{Claim}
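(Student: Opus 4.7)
My plan is to prove the contrapositive: supposing $\mu \in \cM_{\min}(L+\phi)$ satisfies $\mu(V_n) \ge 1-\gamma$, where $V_n := \{\theta \in \E_\phi : d(\theta,\Gamma_n) < \beta^{m_n}\}$, I will show $h(\mu) \le 3\gamma\, C(\cU_1,\beta)$ for a suitable $C = C(\cU_1,\beta)$.

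The principal tools I will use are: the uniform h-expansivity of section~\ref{suexpan} and Remark~\ref{remell}, which furnishes a uniform expansivity constant $\epsilon_0 > 0$ for the time-one Lagrangian flow $\vr^{L+\phi}_1|_{\E_\phi}$ across all $\phi \in \cU_2$, so that $h(\vr_1,\mu) = h(\vr_1,\mu,\A)$ for every Borel partition $\A$ of $\E_\phi$ of diameter below $\epsilon_0$; Lemma~\ref{partAB}, which produces a finite partition $\A$ of $\E_\phi$ with $\diam\A < \epsilon_0$, $\mu(\partial\A) = 0$ and cardinality $|\A| \le M$, where $M = M(\cU_1)$ is independent of $\phi$ and $n$; and Corollary~\ref{cmn}, whose conclusion $\log T_n / m_n \to 0$ (combined with the free choice of $\beta<1$) leaves room to make the hyperbolic transverse scale $\beta^{m_n}$ much smaller than the relevant period-controlled scales. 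I will refine $\A$ to $\A_n := \A\vee\{V_n,V_n^c\}$, still of diameter $<\epsilon_0$, so that h-expansivity reduces the entropy computation to $h(\vr_1,\mu,\A_n)$.

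Using the Rokhlin-type decomposition
\[
h(\vr_1,\mu,\A_n) \le h(\vr_1,\mu,\{V_n,V_n^c\}) + h(\vr_1,\mu,\A_n \mid \{V_n,V_n^c\}),
\]
I bound the first term by the Shannon entropy $H(\gamma) = -\gamma\log\gamma - (1-\gamma)\log(1-\gamma)$, and the second by counting $\mu$-positive $k$-itineraries of $\A_n$ split by their visit pattern to $\{V_n,V_n^c\}$: by $\vr_1$-invariance of $\mu$ the expected frequency of visits to $V_n^c$ equals $\gamma$, so Lemma~\ref{A.1} gives at most $\gamma\log M$ per time step from the $V_n^c$-visits; the contribution of the $V_n$-cells follows the cyclic itinerary along $\Gamma_n$, and combining the subexponential bound $\log T_n/m_n\to 0$ with a sufficiently tight $\beta$ makes its per-step entropy vanish as $k\to\infty$. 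Collecting these estimates and absorbing the absolute constants into $C(\cU_1,\beta)$ will yield $h(\mu) \le 3\gamma\,C(\cU_1,\beta)$ for $n > N^1_\gamma$.

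The hardest part will be controlling the conditional entropy within the thin tube $V_n$: a priori the hyperbolic expansion of the flow over time $T_n$ unfolds the tube of width $\beta^{m_n}$ to macroscopic size, which would allow orbits confined to $V_n$ to carry positive entropy. Resolving this depends on both the very rapid concentration $\int d(\theta, \cA(L))\,d\mu_n = o(\beta^{km_n})$ for every $k$ from Corollary~\ref{cmn}, which allows choosing $\beta$ close to $1$ only after the uniform hyperbolic rates are pinned down, and the minimizing constraint $\supp\mu \subset \cA(L+\phi)$ from Theorem~\ref{ecmm}; the graph property of the Aubry set then restricts the transverse spread of $\mu$ inside $V_n$ and effectively suppresses the hyperbolic unstable directions for $\mu$-almost every orbit.
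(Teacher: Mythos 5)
Your high-level decomposition — split into a thin tube around $\Gamma_n$ and its complement, use uniform h-expansivity plus Lemma~\ref{partAB} to reduce to a fixed finite partition, and control the two pieces separately — follows the spirit of the paper's proof, but your resolution of what you correctly identify as the hardest part is not what the paper does and, more importantly, would not work.

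The step you describe as the crux — "the graph property of the Aubry set then restricts the transverse spread of $\mu$ inside $V_n$ and effectively suppresses the hyperbolic unstable directions for $\mu$-almost every orbit" — cannot be correct. The Ma\~n\'e set is assumed \emph{hyperbolic}, so $\mu$-a.e.\ orbit in it has genuine unstable directions no matter what graph it sits on; if the Lipschitz graph property of $\cA(L+\phi)$ alone killed the entropy inside a tube, it would kill it everywhere and Theorem~\ref{Tzeroentropy} would be immediate in all dimensions with no perturbation, contradicting both the structure of the paper (the graph property is only used on surfaces, via Young's two-dimensional result) and the existence of Anosov geodesic flows whose Ma\~n\'e set is the whole unit tangent bundle. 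The paper's actual mechanism inside the tube is purely combinatorial and does not use minimality beyond $\supp\mu\subset\mN(L+\phi)$ (for h-expansivity): one chooses $\be<\Lip(\vr_1)^{-1}$ — $\be$ must be \emph{small}, not "close to $1$" as you suggest — so that $B(\Ga_n,\be^{m_n})\subset V(\cdot,m_n,\de,f)$ for a Lebesgue number $\de$ of the open cover $\bO=\{B_0\cup B_i\}$, takes a finite set $R_n$ of $\lfloor T_n/\be\rfloor+1$ points along $\Ga_n$ that $(m_n,\de,f)$-spans the tube, and observes that any cell of $\B^{(k_n)}_{f^Q}$ meeting the tube is forced to be $\bigcap_i f^{-Qi}(B_{\ell_i})$ with $\ell_i\in\{0,j_i(p)\}$ for a single $p\in R_n$, yielding $\#W(n,k_n,Q)\le 2^{k_n}\,T_n/\be$ — subexponential in $k_n$ since $\log T_n/m_n\to 0$. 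That spanning-set/Lebesgue-number counting is the missing idea in your plan.

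Two further issues, both tied to the absence of the $f^Q$-device. First, your bound $h(\vr_1,\mu,\{V_n,V_n^c\})\le H(\gamma)$ is $\sim -\gamma\log\gamma$, which is not $O(\gamma)$; it, together with the residual $\log 2$ terms coming from the $2^{k_n}$ factor and the $+1$'s in Lemma~\ref{A.1}, is precisely what the paper absorbs by working with $f^Q$ for $Q$ large (chosen by~\eqref{q2l2}) and dividing by $Q$ at the end. Without introducing $Q$ there is no way to land on a bound of the form $3\gamma\,C(\cU_1,\be)$ with $C$ independent of $\gamma$, which is essential for the $\bigcap_n\cE_{1/n}$ argument. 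Second, the claim that the per-step entropy inside the tube "vanishes as $k\to\infty$" is misplaced: the estimate is made at the \emph{specific} finite scale $k_n\approx m_n/Q$ (via the monotonicity~\eqref{HfNdec}), and it does not vanish but rather tends to $\log 2$ per $f^Q$-step, which again is only controlled after dividing by $Q$.
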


\noindent{\it Proof  Claim~\ref{c1}:}

 The hyperbolicity of $\mN(L)$ and the upper semicontinuity of 
 the Ma\~n\'e set imply that there is $h>0$ and an open subset
  $0\in\cU_{12}\subset\cU_1-\phi_0$ such that $h$ is a uniform 
 h-expansivity constant (cf. Section~\ref{suexpan}, Remark~\ref{rue}) 
 for all $\vr_1^{L+\phi}|_{\mN(L+\phi)}$, with $\phi\in\cU_{12}$.

  Using Corollary~\ref{CSH1} identify the energy levels $\E_\phi:=E_{L+\phi}^{-1}\{c(L+\phi)\}$ 
  with the unit tangent bundle $SM$ using the radial projection $R(v) = \tfrac{v}{|v|}$.
 Let $\mu_0$ be the minimizing measure for $L$:
  \begin{equation}\label{uniqmu0}
  \cM_{\min}(L)=\{\mu_0\}.
  \end{equation}
  Let $\A=\{A_1,\ldots,A_r\}$ be a finite Borel partition of the energy level
  $\E_\phi$
  with 
  \begin{equation}\label{diamah}
  \diam\A<h
  \end{equation}
  and $\mu_0(\partial A)=0$ for all $A\in\A$.
  Let 
  \begin{equation}\label{defer}
  0<\e<  \tfrac 12 (r\,\log r)^{-1},
  \end{equation}
  where $r=\#\A$.
  Using Lemma~\ref{partAB}, 
 for each $A_i\in \A$ let $B_i\subset A_i$ be a compact set such that 
 $\mu_0(A_i\setminus B_i)<\e$ and $\mu_0(\partial B_i)=0$. 
 Define the partition $\B:=\{B_0, B_1,\cdots,B_r\}$,
 where $B_0:=\E_\phi\setminus \bigcup_{i=1}^r B_i$.

  By the continuity of the critical value map $\phi\mapsto c(L+\phi)$ and 
  the ergodic characterization~\eqref{minmeas} we have that any weak* limit
  of minimizing measures is a minimizing measure.
  By the uniqueness~\eqref{uniqmu0} and the compactness of $\E_\phi$,
  if $\lim_k\phi_k= 0$ and $\nu_k\in\cM_{\min}(L+\phi_k)$ then 
  $\lim_k\nu_k=\mu_0$. Since $\mu_0(\partial A_i)=0=\mu_0(\partial B_i)$
  and $\partial(A_i\setminus B_i)\subset \partial A_i\cup \partial B_i$ 
  we have that
  $$
  \lim_k\phi_k=0,\; \nu_k\in\cM_{\min}(L+\phi_k) 
  \qquad \then \qquad\forall i \quad 
   \lim_k\nu_k(A_i\setminus B_i) =\mu_0(A_i\setminus B_i).
  $$
  Then there is an open set $0\in\cU_{13}\subset \cU_{12}$ such that 
  \begin{equation}\label{cU13}
  \forall \phi\in\cU_{13} \quad\forall \mu\in\cM_{\min}(L+\phi)\quad
  \forall i\le r\qquad
   \mu(A_i\setminus B_i)<\e.
  \end{equation}

  Let $\bO:=\{B_0\cup B_1,\ldots, B_0\cup B_r\}$. Observe that $\bO$ 
is an open cover because 
\linebreak
$B_0\cup B_i=(\cup_{j\ne i} B_i)^c$.
\begin{equation}\label{defde}
\text{
Let $\de>0$ be a Lebesgue number for $\bO$.
}
\end{equation}
Let $\be>0$ be such that 
\begin{equation}\label{defbe1}
0<\be<\min\Big\{\,\frac \de 2,\;
\inf_{\phi\in\cU_1}\Lip(\vr^{L+\phi}_1|_{\E_\phi})^{-1}\Big\}
\end{equation}
and such that 
\begin{equation}\label{defbe2}
\sup_{\phi\in\cU_1}\sup_{|\tau|\le\be}
\sup_{\tt\in\E_\phi}d\big(\vr^{L+\phi}_\tau(\tt),\tt\big)
<\frac \de 2.
\end{equation}

Given $n\in\na$ and $\a>0$, let $G(n,\a)$ be a cover of 
$\E_\phi$ of minimal cardinality  by balls of radius $\a^n$. Let
\begin{equation}\label{coverG}
C(\cU_1,\a):=\limsup_n \tfrac 1n\log\# G(n,\a).
\end{equation}
Then (cf. Falconer~\cite[Prop.~3.2]{Falconer0})
$$
1\le \lim_{\a\to 0} C(\cU_1,\a) = \dim \E_\phi<\infty.
$$
Shrink $\be$ if necessary so that $\be$ satisfies \eqref{defbe1}, \eqref{defbe2} and
\begin{equation*}\label{defbe3}
\tfrac 12\le C(\cU_1,\be) < \infty.
\end{equation*}

Let $Q\in\na^+$ be such that 
\begin{equation}\label{q2l2}
\frac{3+2\log 2}Q < \tfrac 14\,\ga\, C(\cU_1,\be).
\end{equation}
Let 
\begin{equation}\label{defnga}
N_\ga^1> 3Q>0
\end{equation}
 be such that 
\begin{align}
\forall n>N_\ga^1\qquad
\tfrac 1n \log\#G(n,\be) &\le 2 \, C(\cU_1,\be)
\label{ngagn}
\\
\intertext{and, using \eqref{ecmn},}
\forall n> N_\ga^1\qquad
\tfrac1{m_n}\log T_n -\tfrac 1{m_n}\log\be 
&< 
\tfrac 14\,\ga\,C(\cU_1,\be).
\label{ngatn}
\end{align}
Suppose that  $\phi\in\cU_{13}$, 
$\mu\in\cM_{\min}(L+\phi)$, $\ga>0$ and $n>N_\ga^1$ satisfy
\begin{equation}\label{mulega1}
\mu(\{\,\tt\in \E_\phi \;|\; d(\tt,\Ga_n) \ge \be^{m_n}\,\})\le\ga,
\end{equation}
and that $\Ga_n$ is also a periodic orbit for $\vr^{L+\phi}$,
we shall prove that then 
\begin{equation}\label{hmulphi3gac}
h_\mu(\vr^{L+\phi})\le3\,\ga\, C(\cU_1,\be).
\end{equation}
This  implies Claim~\ref{c1}.

 Observe that 
 $$
 A_i\cap B_j=
 \begin{cases}
 \emptyset &\text{if }\quad  i\ne j\ne 0,\\
 B_j &\text{if } \quad i=j\ne 0,\\
 A_i\setminus B_i &\text{if }\quad j=0.
 \end{cases}
 $$
 Let $\rho(x):=-x\log x$, $x\in[0,1]$, with $\rho(0):=0$. Then
 \begin{equation}\label{rhoab}
 \rho\left(\frac{\mu(A_i\cap B_j)}{\mu(B_j)}\right)
 =\begin{cases}
 0 &\text{ if }\quad j\ne 0, \\
 \rho\left(\frac{\mu(A_i\setminus B_i)}{\mu(B_0)}\right)
 &\text{ if }\quad j=0.
 \end{cases}
 \end{equation}
 Observe that $B_0=\cup_{i=1}^r(A_i\setminus B_i)$, then
 from~\eqref{cU13}, 
 \begin{equation}\label{mub0e}
 \mu(B_0)< r\,\e.
 \end{equation}

 We have that the relative entropy satisfies
 \begin{align*}
 H_\mu(\A|\B):&=-\sum_{i=1}^r\sum_{j=0}^r \mu(A_i\cap B_j)
  \;\log\frac{\mu(A_i\cap B_j)}{\mu(B_j)}
  \\
  &= \sum_{i=1}^r\sum_{j=0}^r \mu(B_j) \,
  \rho\left(\frac{\mu(A_i\cap B_j)}{\mu(B_j)}\right)
  \\
  &=\sum_{i=1}^r \mu(B_0)\;\rho\left(\frac{\mu(A_i\cap B_0)}{\mu(B_0)}\right)
  \qquad \text{ using \eqref{rhoab},}
  \\
  &\le \mu(B_0) \log r \qquad \text{by Lemma~\ref{A.1} with A=1},
  \\
  &\le \e\, r\log r  < 1     \qquad\text{ using \eqref{mub0e} and \eqref{defer}.}
 \end{align*}
  From Walters \cite[Theorem~4.12(iv)]{Walters} for all 
  $f:\E_\phi\to\E_\phi$ continuous and $Q\in\na$ we have that
  \begin{align}\label{h+1}
  h_\mu(f^Q,\A)\le h_\mu(f^Q,\B)+H_\mu(\A|\B)
  \le  h_\mu(f^Q,\B) +1.
  \end{align}

 Define
 $$
 B(\Ga_n,\be^{m_n}):=\{\,\tt\in\E_\phi\;|\;d(\tt,\Ga_n)<\be^{m_n}\,\}.
 $$
 Let $f$ be the time 1 map  $f:=\vr^{L+\phi}_1$.
 Fix $\tt_0\in\Ga_n$ and let
 $$
 R_n:=\big\{\,\vr_{i\be}(\tt_0)\;\big|\; i=0, 1,\ldots,\lfloor\tfrac{T_n}\be\rfloor\,\big\}.
 $$
 We claim that $R_n$ is an $(m_n,\de,f)$-generating set for $B(\Ga_n,\be^{m_n})$,
 i.e.
 \begin{equation}\label{rngenerating}
 B(\Ga_n,\be^{m_n})\subset \bigcup_{p\in R_n}V(p,m_n,\de,f),
 \end{equation}
 where
 $$
 V(p,m_n,\de,f):=\{\,\tt\in\E_\phi\;|\; d(f^i(\tt),f^i(p))<\de, 
 \quad\forall i=0,\ldots, m_n-1\,\}.
 $$
 Indeed if $q\in B(\Ga_n,\be^{m_n})$ there is $\tt\in\Ga_n$ such that 
 $d(q,\tt)<\be^{m_n}$. 
 Recall that by hypothesis $\Ga_n(t)$ is a periodic orbit
 of $\vr^{L+\phi}$. There is $p\in R_n$ such that $p=\vr^{L+\phi}_\tau(\tt)$
 with $|\tau|\le \be$. 
  In particular
 $$
 f^j(p)=\vr^{L+\phi}_j(\vr_\tau^{L+\phi}(\tt))=\vr_\tau^{L+\phi}(f^j(\tt)).
 $$
 If $0\le j\le m_n-1$, by~\eqref{defbe1} and~\eqref{defbe2}
  we have that 
\begin{align*}
d\big(f^j(q),f^j(\tt)\big) 
&\le \Lip(f)^j\, d(q,\tt)\le \Lip(f)^j\,\be^{m_n}\le \be <\de/2,
\\
d\big(f^j(\tt),f^j(p)\big)
&= d\big(f^j(\tt),\vr_\tau^{L+\phi}(f^j(\tt))\big)
\le d_{C^0}(\vr^{L+\phi}_\tau,id)< \de/2,
\\
d\big(f^j(q),f^j(p)\big)
&\le d\big(f^j(q),f^j(\tt)\big) +d\big(f^j(\tt),f^j(p)\big)
< \de/2+\de/2=\de.
\end{align*}
Therefore $q\in V(p,m_n,\de,f)$ and $p\in R_n$.

  Recall that $Q\in\na^+$ is from~\eqref{q2l2}. Write
 \begin{align*}
 \B_{f^Q}^{(k)}:&=\textstyle\bigvee\limits_{i=0}^{k-1} f^{-iQ}(\B),
 \\
 \bO_{f^Q}^{(k)}:
 &=\big\{\textstyle\bigcap_{i=0}^{k-1}f^{-iQ}(U_i)\;
 \big|\; U_i\in \bO \quad \forall i=0,\ldots,k-1\,\big\}.
 \end{align*}
 Let 
 \begin{equation}\label{defW}
 W(n,k,Q):=\big\{\, \cB\in \B^{(k)}_{f^Q}
 \;\big |\; \cB\cap B(\Ga_n,\be^{m_n})\ne\emptyset\,\big\}.
 \end{equation}
 Let $k_n=k(n,Q)\in\na$ be such that
 \begin{equation}\label{kn1}
 (k_n-1) \, Q < m_n \le k_n \,Q.
 \end{equation}
 Since in~\eqref{ecmn} $m_n>n$, 
 by~\eqref{defnga} we have that $k_n\ge 2$ whenever $n>N_\ga^1$.
 We want to estimate $\#W(n,k_n,Q)$.
 Given $\cB\in W(n,k_n,Q)$, choose $q(\cB)\in \cB\cap B(\Ga_n,\be^{m_n})$.
 By the inclusion~\eqref{rngenerating} there is $p(\cB)\in R_n$ such that 
 $$
 q(\cB)\in V(p(\cB),m_n,\de,f)\subset V(p(\cB),k_n,\de,f^Q).
 $$
 In particular $d\big(f^{Qi}(q(\cB)),f^{Qi}(p(\cB))\big)<\de$ for $0\le i<k_n$.
 The choice of $\de$ in \eqref{defde}
  implies that there is $j_i=j_i(p)\in\{1,\ldots,r\}$, depending only on $(i,p)$,
   such that 
 $$
 f^{Qi}(q(\cB))\in B\big(f^{Qi}(p(\cB)),\de\big)
 \subset B_0\cup B_{j_i},
 \qquad i=0,\ldots,k_n-1.
 $$
 Therefore 
 \begin{align*}
 q(\cB)&\in \bigcap_{i=0}^{k_n-1}f^{-Qi}B\big(f^{Qi}(p(\cB)),\de\big)\subset
 \\
 &\qquad\subset \bigcap_{i=0}^{k_n-1}f^{-Qi}(B_0\cup B_{j_i})
 =\bigcap_{i=0}^{k_n-1}\Big(f^{-Qi}(B_0)\cup f^{-Qi}(B_{j_i})\Big).
 \end{align*}
 Since $\B$ is a partition, $\cB\in \B^{(k_n)}_{f^Q}$ 
 and $q(\cB)\in\cB$
 we have that 
 \begin{equation}\label{cbfq}
 \cB=\bigcap_{i=0}^{k_n-1} f^{-Qi}(B_{\ell_i}),
 \qquad \text{ where } \quad \ell_i\in\{0,j_i(p)\}.
 \end{equation}
 Observe that $j_i(p)$ depends only on $i$ and $p$.
Thus for $p\in R_n$  we have that 
 $$
 \#\big\{\,\cB\in W(n,k_n,Q)\subset \B^{(k_n)}_{f^Q}
 \;\big|\; p(\cB) =p\big\}\le 2^{k_n}.
 $$
 Therefore
 \begin{equation}\label{Wnknq}
  \# W(n,k_n,Q)\le 2^{k_n} \cdot \# R_n\le 2^{k_n}\frac{T_n}\be.
 \end{equation}

 By~\eqref{defbe1} we have that 
$B(\tt,\be^n)\subset V(\tt,n,\de,f)$. Therefore
\begin{equation}\label{BVVt}
\forall\tt\in\E_\phi\qquad 
B(\tt,\be^{m_n})\subset V(\tt,m_n,\de,f)\subset V(\tt,k_n,\de,f^Q).
\end{equation}
Identify the covering $G(m_n,\be)$ from~\eqref{coverG}
by balls of radius $\be^{m_n}$
with the set of their centers.
Then  by~\eqref{BVVt}, the set $G(m_n,\be)$ is a $(k_n,\de,f^Q)$-generating set, i.e.
$$
\E_\phi\subset \bigcup_{\tt\in G(m_n,\be)}V(\tt,k_n,\de,f^Q).
$$

We show that the same argument as in~\eqref{Wnknq} gives
for $n>N^1_\ga$ that
 \begin{equation}\label{B2kG}
  \#\B^{(k_n)}_{f^Q}\le 2^{k_n} \,\#G(m_n,\be).
 \end{equation}
 Namely, we construct a map $p: \B^{(k_n)}_{f^Q}\to G(m_n,\be)$
 which is at most $2^{k_n}$ to 1 as follows.
 Given $\cB\in  \B^{(k_n)}_{f^Q}$ choose $q(\cB)\in \cB$.
 Now choose $p(\cB)\in G(m_n,\be)$ such that 
 $$
 q(\cB)\in V(p(\cB),k_n,\de,f^Q).
 $$
 From~\eqref{defde} for all $i=0,\ldots, k_n-1$
  there is $j_i(p)$ such that $B\big(f^{iQ}(p(\cB)),\de\big)\subset B_0\cup B_{j_i(p)}$
  and therefore
  $$
  q(\cB)\in
   \bigcap\limits_{i=0}^{k_n-1}
   f^{-iQ}\big(B(f^{iQ} (p(\cB)),\de)\big)
   \subset\bigcap\limits_{i=0}^{k_n-1}
   f^{-iQ}(B_0\cup B_{j_i(p)}).
  $$
  This implies that 
  $$
  \cB=\bigcap\limits_{i=0}^{k_n-1}
   f^{-iQ}(B_{\ell_i})
   \quad\text{ with }\quad
   \ell_i\in\{0,\,j_i(p(\cB))\},
  $$
  which in turn implies~\eqref{B2kG}.
 
 By  the hypothesis~\eqref{mulega1} and by~\eqref{defW}
 we have that
 $$
 \tga_n:=\sum_{\cB\in\B^{(k_n)}_{fQ}\setminus W(n,k_n,Q)}\mu(\cB)\le \ga.
 $$
 Using Lemma~\ref{A.1} and inequalities
 \eqref{Wnknq} and \eqref{B2kG} we have that
 \begin{align*}
 h_\mu(f^Q,\B)
 &\le\frac 1{k_n}H_\mu(\B^{k_n}_{f^Q})
 \qquad\quad \text{ by~\eqref{HfNdec}  (cf.  Walters~\cite[Theorem~4.10]{Walters})},
 \\
 &\le \frac 1{k_n}\sum_{\cB\in W(n,k_n,Q)}\hskip -.5cm-\mu(\cB)\,\log\mu(\cB)
 +  \frac 1{k_n}\sum_{\cB\in \B^{(k_n)}_{f^Q}\setminus W(n,k_n,Q)}
 \hskip-.9cm -\mu(\cB)\,\log\mu(\cB)
 \\
 &\le \frac 1{k_n}\big(1+(1-\tga_n) \log\# W(n,k_n,Q)\big)
 +\frac 1{k_n} \big( 1 +\ga \, \log\#\B^{(k_n)}_{f^Q}\big)
 \\
 &\le(1+\ga) \log 2 + \tfrac 1{k_n}(2+\log T_n-\log\be)
+\tfrac 1{k_n} \,\ga\, \log\# G(m_n,\be).
 \end{align*}
 Thus, using~\eqref{h+1}, and then~\eqref{ngagn} and that $m_n>n$
 we have that for $n>N_\ga^1$,
 \begin{align}
 h_\mu(f^Q,\A) &\le 1+h_\mu(f^Q,\B) 
 \notag \\
 &\le 1+2\log 2+\tfrac 1{k_n}(2+\log T_n-\log\be)
 +\tfrac {m_n}{k_n}\, 2\,\ga\, C(\cU_1,\be).
 \label{hfqa}
 \end{align}
 The hyperbolicity of $\mN(L+\phi)$ implies that
  $f^Q$ has also h-expansivity constant $h$ on $\mN(L+\phi)$ 
  because for $\tt\in\mN(L+\phi)$ there is $\tau=\tau(\tt)>0$ such that
  $$
  \Ga_h(\tt,f)=\Ga_h(\tt,f^Q)=\vr_{[-\tau,\tau]}(\tt),
  $$
  where
 $$
 \Ga_h(\tt,f^Q):=\{\vrt\in \E_\phi\;|\;\forall n\in\Z\quad d(f^{nQ}(\tt),f^{nQ}(\vrt))<h\}.
 $$
 In particular $h_{top}(\Ga_h(\tt,f^Q))=h_{top}(\Ga_h(\tt,f))=0$.
 By Corollary~\ref{cecmm},
  $\supp(\mu)\subset\mN(L+\phi)$. Therefore $h$ is an h-expansivity 
  constant on $\supp(\mu)$. Since by~\eqref{diamah}
  $\diam \A<h$, by
 Theorem~\ref{TBhE} (cf. Bowen~{\cite[Theorem~3.5]{Bowen9}})
 we have that
 \begin{align*}
 h_\mu(f^Q)= h_\mu(f^Q,\A).
 \end{align*}
 By~\eqref{kn1}, $k_n\ge\tfrac{m_n}Q$,
  \begin{align*}
 h_\mu(f)&=\tfrac 1Qh_\mu(f^Q)=\tfrac 1Q h_\mu(f^Q,\A)
 \\
 &\le \tfrac1Q(3+2\log 2) +\tfrac 1{m_n}{\log T_n}-\tfrac 1{m_n}\log\be
 +2\,\ga\,C(\cU_1,\be).
 \end{align*}
 Using~\eqref{q2l2} and \eqref{ngatn} we obtain
 $$
 h_\mu(f)\le 3\,\ga\, C(\cU_1,\be).
 $$
 This proves inequality~\eqref{hmulphi3gac}, and also
 Claim~\ref{c1}.

 \hfill $\triangle$

\bigskip

\begin{Claim}\label{c2}\quad

There are $C>0$, $N^2_\ga>N^1_\ga>0$ 
such that if $n>N^2_\ga$,
$\phi\in\cU_2$ are such that
$$
\phi \ge 0 \qquad\text{ and } \qquad \phi|_{\pi\Ga_n}\equiv 0,
$$
and $\nu\in\cM_{\min}(L+\phi)$,
$\tt\in \Ga_n$, $\vrt\in\supp(\nu)$,
$d(\tt,\vrt) \ge\be^{m_n}$, then
$$
d\big(\pi(\tt),\pi(\vrt)\big) \ge \tfrac 1C\, \be^{m_n}.
$$
\end{Claim}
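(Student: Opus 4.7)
My plan is to reduce the statement to Mather's Lipschitz graph property of the Aubry set, applied to both $\tt$ and $\vrt$. Observe first that since $\nu\in\cM_{\min}(L+\phi)$, the Poincar\'e-recurrence argument in the proof of Corollary~\ref{cecmm} gives $\vrt\in\supp(\nu)\subset\cA(L+\phi)$, so half of the work is automatic.

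The core step is to show that, for $n>N_\ga^2$ large enough, $\Ga_n\subset\cA(L+\phi)$ as well. Let $\mu_n$ denote the invariant probability supported on $\Ga_n$. Since $\phi\ge 0$ vanishes on $\pi\Ga_n$,
$$
\int(L+\phi)\,d\mu_n \;=\; \int L\,d\mu_n \;\le\; -c(L)+o(\be^{k m_n})
$$
by Lemma~\ref{LA2} combined with Corollary~\ref{cmn} (for any $k\in\na^+$). The hypothesis $\phi\ge 0$ gives $c(L+\phi)\le c(L)$ via~\eqref{minmeas}, while the definition of $c(L+\phi)$ yields $-c(L+\phi)\le \int(L+\phi)\,d\mu_n$. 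Since $c(L+\phi)$ is a fixed number whereas $o(\be^{k m_n})\to 0$, these inequalities force $c(L+\phi)=c(L)$ and, for $n$ sufficiently large, $\mu_n\in\cM_{\min}(L+\phi)$. Theorem~\ref{ecmm} then gives $\Ga_n=\supp(\mu_n)\subset\cA(L+\phi)$.

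With both $\tt,\vrt\in\cA(L+\phi)$, Mather's graph property (see e.g.~\cite[Theorem~VI(b)]{Ma7}) provides a Lipschitz constant $C>0$ for $(\pi|_{\cA(L+\phi)})^{-1}$. After shrinking $\cU_2$ if necessary, one may take $C$ uniform in $\phi\in\cU_2$ by using the $C^{k-1}$-continuity of the lagrangian vector field restricted to the critical energy level (Remark~\ref{remell}) together with the persistence of hyperbolicity and the uniform strict convexity of $L+\phi$. Applied to the pair $\tt,\vrt\in\cA(L+\phi)$ this gives
$$
d(\tt,\vrt)\;\le\;C\,d\bigl(\pi(\tt),\pi(\vrt)\bigr),
$$
so that $d(\pi\tt,\pi\vrt)\ge \tfrac{1}{C}\be^{m_n}$, which is the claim.

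The main obstacle is the upgrade from \emph{asymptotic} to \emph{exact} minimality of $\mu_n$ in the second step: one must rule out a strictly positive gap $c(L)-c(L+\phi)$. The quantitative estimate from Corollary~\ref{cmn} is crucial here, as it holds for \emph{every} $k\in\na^+$, so the approximation error $o(\be^{k m_n})$ decays faster than any fixed positive gap could sustain; this is exactly what forces $c(L+\phi)=c(L)$ and $\mu_n\in\cM_{\min}(L+\phi)$ once $n$ is large enough. A secondary delicate point is ensuring the uniformity of the Mather graph constant $C$ over the neighbourhood $\cU_2$, which amounts to a standard compactness and continuous dependence argument on critical subsolutions.
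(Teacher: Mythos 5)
Your reduction to Mather's Lipschitz graph property is elegant in spirit, but the crucial intermediate step — that $\mu_n\in\cM_{\min}(L+\phi)$, hence $\Ga_n\subset\cA(L+\phi)$ — does not actually follow from the estimates you invoke, and this is a genuine gap. What Corollary~\ref{cmn} and Lemma~\ref{LA2} give (for the sequence $\mu_n$ depending only on $L$) is $\int L\,d\mu_n\le -c(L)+\e_n$ with $\e_n=o(\be^{km_n})$. Combined with $c(L+\phi)\le c(L)$ and $-c(L+\phi)\le\int(L+\phi)\,d\mu_n=\int L\,d\mu_n$ this yields only the two-sided bound $c(L)-\e_n\le c(L+\phi)\le c(L)$ for a \emph{fixed} $n$ and $\phi$ admissible at that $n$. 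That is a small gap, not a zero gap; equivalently $\mu_n$ is $\e_n$-minimizing for $L+\phi$, not minimizing. Your appeal to ``$o(\be^{km_n})$ decays faster than any fixed gap'' tacitly holds $\phi$ fixed while $n\to\infty$, but in the Claim the hypothesis $\phi|_{\pi\Ga_n}\equiv 0$ ties each $\phi$ to a specific $n$, and in the eventual application $\phi=\phi_n$ genuinely varies with $n$. Indeed one should expect $c(L+\phi)<c(L)$ in general: $c(L+\phi)=c(L)$ would require a minimizing measure of $L$ supported in $\{\phi=0\}$, and $\{\phi=0\}$ is only guaranteed to contain a neighbourhood of $\pi\Ga_n$, which need not contain $\pi\cM(L)$.

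The paper's proof avoids this entirely by \emph{not} trying to place $\Ga_n$ inside $\cA(L+\phi)$. Instead it argues by contradiction with Mather's Crossing Lemma~\ref{CL}: if $d(\tt,\vrt)\ge\be^{m_n}$ but $d(\pi\tt,\pi\vrt)<\frac1C\be^{m_n}$, the two Euler--Lagrange arcs through $\tt$ and $\vrt$ cross transversally and can be rerouted with an action gain of order $\eta\,\be^{2m_n}$. Splicing in the orbit $\Ga_n$ (whose $(L+\phi+c(L+\phi))$-action is controlled to be $o(\be^{2m_n})$ via~\eqref{LphiL}--\eqref{AGa}, using only that $\mu_n$ is almost minimizing for $L$) then produces a curve strictly cheaper than the semi-static segment through $\vrt\in\cA(L+\phi)$, contradicting~\eqref{1semi}. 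This only needs $\Ga_n$ to be a periodic orbit of $L+\phi$ with very small action, which your own estimates do establish; it does not need $\Ga_n$ to be exactly minimizing. To repair your approach along your own lines you would have to replace the graph property by a quantitative ``almost-graph'' statement for almost-minimizing periodic orbits, which in effect reconstructs the Crossing Lemma.
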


Denote 
the action of a $C^1$ curve $\a:[S,T]\to\re$ by
$$
A_L(\a):=\int_S^T L(\a(t),\dot\a(t))\; dt.
$$
The following Crossing Lemma is extracted for Mather~\cite{Mat5} with 
the observation that the estimates can be taken uniformly on $\cU_2$.

\begin{Lemma}[Mather~{\cite[p. 186]{Mat5}}]\label{CL}\quad

If $K>0$, then there exist $\e$, $\de$, $\eta>0$ and 
\begin{equation}\label{CL>1}
C> 1,
\end{equation}
such that 
if $\phi\in\cU_2$, and 
$\a, \ga:[t_0-\e,t_0+\e]\to M$
are solutions of the Euler-Lagrange equation
with $\lV d\a(t_0)\rV$, $\lV d\ga(t_0)\rV\le  K$,
$d\big(\a(t_0),\ga(t_0)\big)\le\de$, and
$$d\big(d\a(t_0),d\ga(t_0)\big)\ge C\; d\big(\a(t_0),\ga(t_0)\big),$$
then there exist $C^1$ curves $a,c:[t_0-\e,t_0+\e]\to M$
such that $a(t_0-\e)=\a(t_0-\e)$, $a(t_0+\e)=\ga(t_0+\e)$,
$c(t_0-\e)=\ga(t_0-\e)$, $c(t_0+\e)=\a(t_0+\e)$, and
\begin{equation}\label{ecl}
A_{L+\phi}(\a)+A_{L+\phi}(\ga)
-A_{L+\phi}(a)-A_{L+\phi}(c)
\ge \eta\; d\big(d\a(t_0),d\ga(t_0)\big)^2.
\end{equation}

\end{Lemma}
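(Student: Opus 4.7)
The plan is to follow Mather's original argument in local coordinates, using a Taylor expansion of the action to second order and exploiting the uniform convexity of $L+\phi$ in the fiber variable, while verifying that all implicit constants can be taken uniformly over $\phi\in\cU_2$ (which is legitimate because $\cU_2$ is a $C^k$-bounded neighbourhood of $0$, so after possibly shrinking $\cU_2$ the convexity constant $a>0$ and the $C^2$ bounds on $L+\phi$ can be chosen independently of $\phi$).

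First, I would fix a coordinate chart. The \emph{a priori} velocity bound $\lV d\a(t_0)\rV,\lV d\ga(t_0)\rV\le K$ means that for $\e,\de$ sufficiently small, both $\a|_{[t_0-\e,t_0+\e]}$ and $\ga|_{[t_0-\e,t_0+\e]}$ are contained in a single normal chart near $\a(t_0)$, in which we identify $M$ locally with $\re^{\dim M}$. Write $\xi:=\a(t_0)-\ga(t_0)$ and $\zeta:=\dot\a(t_0)-\dot\ga(t_0)$, so the hypothesis becomes $|\zeta|\ge C|\xi|$.

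Second, I would build the crossed curves explicitly rather than as fresh minimizers. Choose a smooth cut-off $\la:[t_0-\e,t_0+\e]\to[0,1]$ with $\la(t_0-\e)=0$, $\la(t_0+\e)=1$, $\dot\la\sim 1/\e$, and set
$$
a(t):=\a(t)+\la(t)\bigl(\ga(t)-\a(t)\bigr),\qquad
c(t):=\ga(t)+\la(t)\bigl(\a(t)-\ga(t)\bigr).
$$
These are $C^1$, have the prescribed endpoints, and (crucially) satisfy $a+c=\a+\ga$ and $\dot a+\dot c=\dot\a+\dot\ga$. Writing $p:=\tfrac12(\a+\ga)$, $r:=\tfrac12(\a-\ga)$, $s:=\tfrac12(a-c)=(1-2\la)\,r$, the difference $a-c$ is a damped version of $\a-\ga$: at $t_0$ one has $s(t_0)=0$ and $\dot s(t_0)=-\dot\la(t_0)\,\xi/2+O(|\zeta|)$.

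Third, I would Taylor-expand $L+\phi$ to second order around $(p,\dot p)$ in each integrand. The odd terms cancel between $\a,\ga$ (respectively $a,c$), and one obtains
\begin{align*}
(L{+}\phi)(\a,\dot\a)+(L{+}\phi)(\ga,\dot\ga)&-(L{+}\phi)(a,\dot a)-(L{+}\phi)(c,\dot c) \\
&=\bigl[\dot r\cdot\partial_{vv}(L{+}\phi)\cdot\dot r-\dot s\cdot\partial_{vv}(L{+}\phi)\cdot\dot s\bigr] \\
&\quad+2\bigl[r\cdot\partial_{xv}(L{+}\phi)\cdot\dot r-s\cdot\partial_{xv}(L{+}\phi)\cdot\dot s\bigr] \\
&\quad+\bigl[r\cdot\partial_{xx}(L{+}\phi)\cdot r-s\cdot\partial_{xx}(L{+}\phi)\cdot s\bigr]+O(|r|^3{+}|\dot r|^3).
\end{align*}
Integrating over $[t_0-\e,t_0+\e]$ and using uniform convexity $\partial_{vv}(L+\phi)\ge a I$, the $\dot r$–quadratic contributes at least $c_1\,\e\,|\zeta|^2$ for some $c_1>0$ (uniform in $\phi$). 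The penalty terms involving $\dot s$, $r$, $s$ are all bounded by $c_2(|\xi|^2/\e+|\xi||\zeta|+\e|\xi|^2)$; since $|\xi|\le|\zeta|/C$, each of these is at most $c_2\,\e\,|\zeta|^2/(\e C)$, plus cross terms of the same flavor.

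The main obstacle — and essentially the whole content of the proof — is choosing the constants in the right order so that the positive $\e|\zeta|^2$ from the convexity term genuinely dominates \emph{all} the penalty and higher-order Taylor remainders. Concretely, one first fixes $\e$ small enough that the cubic remainders are a small fraction of $c_1\e|\zeta|^2$ (using $|\dot r|\le 2K$), then takes $C$ large enough that the $|\xi|^2/\e$-style terms are absorbed, and finally shrinks $\de$ (bounding $|\xi|$) so that the cross-terms involving $\xi\zeta$ are also absorbed. This yields \eqref{ecl} with $\eta$ of order $a\,\e$. Because the $C^2$ bounds on $L+\phi$ and the convexity constant are uniform on $\cU_2$, the thresholds $\e,\de,\eta,C$ can be chosen independently of $\phi\in\cU_2$.
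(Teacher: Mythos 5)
The paper does not prove this lemma; it simply cites Mather's original paper \cite{Mat5} and records the observation that the constants can be chosen uniformly over $\cU_2$. So I compare your argument against the standard Mather-type argument.

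Your explicit construction of the crossed curves $a,c$ by interpolation is the wrong one, and the proof breaks at exactly the step where you estimate the $\dot s$ penalty. You bound the penalty by $c_2\bigl(|\xi|^2/\e+|\xi||\zeta|+\e|\xi|^2\bigr)$, which tacitly assumes $|r(t)|\lesssim|\xi|$ on the whole interval. In fact $r$ drifts: $|r(t)|\sim|\xi|+|\zeta|\,|t-t_0|$, so $\int\dot\mu^2|r|^2$ (with $\mu:=1-2\la$) already contributes a term of size $\e|\zeta|^2$, exactly the same order as your positive term $c_1\e|\zeta|^2$, and it does not shrink as $C\to\infty$. Moreover, the cross term $-2\int\dot\mu\mu\, r\cdot\partial_{vv}(L+\phi)\cdot\dot r$ also contributes at order $\e|\zeta|^2$, with the wrong sign. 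A direct computation shows the approach cannot be rescued by a cleverer $\la$: take $L=\tfrac12|v|^2$ on $\re^n$, $t_0=0$, $\e=1$, $\a(t)=v_1t$, $\ga(t)=v_2t$ (so $\xi=0$, $\zeta=v_1-v_2$, and the hypothesis $|\zeta|\ge C|\xi|$ is trivially met). Then for any $C^1$ cut-off with $\la(-1)=0$, $\la(1)=1$, one gets
\begin{equation*}
A(\a)+A(\ga)-A(a)-A(c)=-\tfrac14\,|\zeta|^2\int_{-1}^1\dot\mu(t)^2\,t^2\,dt\ \le\ 0 ,
\end{equation*}
with equality only in the (inadmissible) limit $\dot\mu\to-2\de_0$. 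So your $a,c$ strictly \emph{increase} the action; the claim that the convexity term dominates the penalties is false.

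The correct mechanism is different. Since the lemma only asserts that \emph{some} $C^1$ curves $a,c$ with the swapped endpoints work, one should take $a,c$ to be the action \emph{minimizers} (or near-minimizers) for those boundary data. The gain then comes from the fact that the corner concatenations $\a|_{[t_0-\e,t_0]}*\ga|_{[t_0,t_0+\e]}$ and $\ga|_{[t_0-\e,t_0]}*\a|_{[t_0,t_0+\e]}$ have the same total action as $\a,\ga$, but are \emph{not} $C^1$: the corner has angle $\sim|\zeta|$, and by uniform convexity the true minimizer with the same endpoints beats the corner curve by $\gtrsim\eta|\zeta|^2$ (this is again visible in the free-particle computation, where the minimizers are straight lines and the gain is $\tfrac12|\zeta|^2\e$). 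The case $\xi\neq 0$ is handled by the largeness of $C$: for $C$ large, the position gap $|\xi|\le|\zeta|/C$ is a controlled perturbation of the $\xi=0$ picture. The uniformity over $\phi\in\cU_2$ is then just what you observed: $\cU_2$ is $C^2$-bounded, so the convexity constant and the $C^2$ bounds on $L+\phi$ are uniform, and all thresholds $\e,\de,\eta,C$ can be chosen independently of $\phi$.
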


\bigskip

\noindent{\it Proof of Claim~\ref{c2}:}
Let $K>0$ be such that
$$
\forall \phi\in\cU_2\qquad
E_{L+\phi}^{-1}\{c(L+\phi)\} \subset \{\,\xi\in TM\;|\; \lV \xi\rV<K\,\}.
$$
Let $\e$, $\de$, $\eta$, $C>0$ be from Lemma~\ref{CL}.
Choose $M_\ga>N^1_\ga$ such that if $n>M_\ga$ then
\begin{equation}\label{1Cd}
\tfrac 1C\be^{m_n} <\de.
\end{equation}
Suppose by contradiction that there are $\phi\in\cU_2$, 
$\nu\in\cM_{\min}(L+\phi)$,
$\tt\in\Ga_n$, $\vrt\in\supp(\nu)$
such that 
$\phi\ge 0$, 
$\phi|_{\Ga_n}\equiv 0$,
 $$
 d(\tt,\vrt)\ge \be^{m_n}
 \qquad\text{ and }\qquad
 d\big(\pi(\tt),\pi(\vrt)\big) < \tfrac 1C\, \be^{m_n}.
$$
Since $\phi$ is $C^2$, $\phi\ge 0$ and $\phi|_{\pi\Ga_n}\equiv 0$
we have that $d\phi|_{\pi\Ga_n}\equiv 0$ and then $\Ga_n(t)$
is also a periodic orbit for $L+\phi$, with the same parametrization.
Write  $\a(t)=\pi\vr^{L+\phi}_t(\vrt)$, $\ga(t)=\pi\vr^{L+\phi}_t(\tt)$, $t\in[-\e,\e]$.
By~\eqref{1Cd} we can apply 
Lemma~\ref{CL} 
and obtain $a,\,c:[-\e,\e]\to M$ satisfying \eqref{ecl}.
Since $\vrt\in\supp(\nu)\subset\cA(L+\phi)$ the segment $\a$ is semi-static
for $L+\phi$:
\begin{align}
&\hskip -.7cmA_{L+\phi+c(L+\phi)}(\a) = \Phi^{L+\phi}_{c(L+\phi)}(\a(-\e),\a(\e))
\notag \\
&=\inf\{\,A_{L+\phi+c(L+\phi)}(x)\;|\;x\in C^1([0,T], M),\;
 T>0, \;
x(0)=\a(-\e), \; x(T)=\a(\e)\;\}.
\label{1semi} 
\end{align}
Consider the curve 
$x = a *\pi \vr^{L+\phi}_{[\e,T_n-\e]}(\tt) *c$ joining $\a(-\e)$ to $\a(\e)$.
Writing
$$
L_\phi:=L+\phi,
$$
we have that
\begin{align}
A_{L_\phi+c(L_\phi)}(x) 
&= A_{L_\phi+c(L_\phi)}(a) + A_{L_\phi+c(L_\phi)}(\Ga_n)-A_{L_\phi+c(L_\phi)}(\ga)
+A_{L_\phi+c(L_\phi)}(c)
\notag\\
& \le A_{L_\phi+c(L_\phi)}(\a) -\eta\, d(\tt,\vrt)^2 + A_{L_\phi+c(L_\phi)}(\Ga_n)
\notag\\
&\le A_{L_\phi+c(L_\phi)}(\a) -\eta\, \be^{2 \, m_n} + A_{L_\phi+c(L_\phi)}(\Ga_n).
\label{Ax}
\end{align}

Now we estimate $A_{L_\phi+c(L_\phi)}(\Ga_n)$. 
Observe that since $L_\phi=L+\phi \ge L$ we have that
$$
c(L+\phi)\le c(L).
$$
Since $\phi\vert_{\Ga_n}\equiv 0$, we have that
$\phi |_{\Ga_n}+ c(L+\phi) \vert_{\Ga_n} \le c(L)$.
Observe that $\mu_n$ is an invariant measure for the
flows of both $L$ and $L+\phi$ with $\supp\mu_n=\Ga_n$,
thus 
\begin{align}
A_{L_\phi+c(L_\phi)}(\Ga_n)
&= T_n\int \big[ L+\phi+c(L+\phi)\big]\; d\mu_n
\notag\\
&\le T_n\int \big[L+c(L)\big]\, d\mu_n
=A_{L+c(L)}(\Ga_n).
\label{LphiL}
\end{align}
By the choice of $m_n$, $T_n$, $\mu_n$ from Corollary~\ref{cmn} 
for $L$ 
and using Lemma~\ref{LA2}  applied to $L$, 
we have that for any $\rho>0$, setting $e:=c(L)+1$,
\begin{align}
A_{L+c(L)}(\Ga_n) &= T_n \int \big[L+c(L)\big]\, d\mu_n
\le B(L,e)\; T_n \int d(\xi,\cA(L)) \,d\mu_n(\xi)
\notag\\
&\le  B\,T_n\cdot o(\be^{k\, m_n}) 
=o\big(\be^{(k-\rho)\,m_n}\big).
\label{AGa}
\end{align}
In Corollary~\ref{cmn} the estimate holds with the same sequence $\mu_n$ for any $k\in\na^+$.
Thus, choosing $k$, $\rho$ such that  $k-\rho\ge2$, if $n$ is large enough, 
from \eqref{Ax}, \eqref{LphiL} and \eqref{AGa}, we get
\begin{align*}
A_{L_\phi+c(L_\phi)}(x) &\le
A_{L_\phi+c(L_\phi)}(\a) -\eta\, \be^{2 m_n} + o(\be^{2 m_n})
\\
&< A_{L_\phi+c(L_\phi)}(\a).
\end{align*}
This contradicts \eqref{1semi} and  proves Claim \ref{c2}. 

\hfill $\triangle$

\bigskip

We use $L$ from~\eqref{LL0p0} and 
$m_n$, $T_n$, $\mu_n$, $\Ga_n$, $\be$ and $\cU_2$ from Claim~\ref{c1} 
and $C$, $N^2_\ga$ from Claim~\ref{c2}.
By Whitney Extension Theorem~\cite[p. 176 ch. VI \S 2.3]{stein} there is $A>0$ 
and $C^k$ functions  $f_n\in C^k(M,[0,1])$ such that
\begin{equation*}
0\le f_n(x) = \begin{cases}
0 & \text{in a small neighbourhood of }\pi(\Ga_n),
\\
\be^{(k+1) m_n} &\text{if } d(x,\pi(\Ga_n)) \ge\tfrac 1C\, \be^{m_n},
\end{cases}
\end{equation*} 
and $\lV f_n\rV_{C^k}\le A$.
Take $\e>0$ such that $\forall n>N^2_\ga$, $\phi_n:=\e f_n\in \cU_2$.

Write $L_n:=L+\phi_n$. 
Observe that $\Ga_n$ is also a periodic orbit for $L_n$.
In particular Claim~\ref{c1} and Claim~\ref{c2} hold for measures
in $\cM_{\min}(L_n)$.
Suppose by contradiction
 that there is a sequence $n=n_i\to+\infty$ 
 such that 
 $\phi_n\notin\cE_{\ov\ga}$ with
$\ov{\ga}= 4\ga\, C(\cU_1,\be)$.
Then there   is a minimizing measure $\nu_n\in \cM_{\min}(L_n)$ with
$h(\nu_n)\ge 4 \ga \,C(\cU_1,\be)>3\ga\, C(\cU_1,\be)$.
By Claim~\ref{c1} and Claim~\ref{c2} we have that
$$
\nu_n\big(\big\{\,\vrt\in TM \;\big|\; d(\pi(\vrt),
 \pi(\Ga_n))\ge\tfrac 1C\,\be^{m_n}\;\big\}\big)>\ga.
$$
Since $\nu_n\in \cM(L_n)\subset \cC(TM)$ is a  closed measure, 
by \eqref{minmeas}, 
$
\int \big[L +c(L)\big]\, d\nu_n \ge 0.
$
Then
\begin{align}
\int \big( L+\phi_n\big) \,d\nu_n &\ge -c(L) +\int\phi_n\,d\nu_n
\notag
\\
&\ge -c(L) + \e \ga\, \be^{(k+1) m_n}.
\label{AM}
\end{align}

Observe that $\mu_n$ is also an invariant probability for $L_n=L+\phi_n$.
From Lemma~\ref{LA2} and Corollary~\ref{cmn}
applied to $L$ and $e=c(L)+1$,  we have that
\begin{align}
\int \big(L+\phi_n\big) \,d\mu_n &
= \int L\;d\mu_n
\le -c(L) + B(L,e) \int d(\theta,\cA(L)) \; d\mu_n(\theta)
\notag
\\
&\le -c(L)+ o\big(\be^{(k+1)m_n} \big).
\label{AN}
\end{align}
Inequalities \eqref{AN} and \eqref{AM} imply
that for $n=n_i$ large enough $\nu_{n_i}$ is not minimizing, 
contradicting the choice of $\nu_{n_i}$.
Therefore $\phi_n\in \cE_{\ov\ga}\cap\cU_1$ for $n$ large enough.

\qed

\bigskip

\appendix

\section{Entropy.}\label{AE}

 Let $\phi_t$ be a continuous flow without fixed points 
 on a compact metric space $X$.
 
 For $T>0$ define the distance $d_T$ on $X$ by
 $$
 d_T(x,y) := \max_{s\in[0,T]} d(\phi_s(x),\phi_s(y)). 
 $$
 For $\de, \, T>0$, $x\in X$ the {\it dynamic ball } $V(x,T,\de)$ is defined 
 as the closed ball of radius $\de$ for the distance $d_T$ centered at $x$, equivalently
 $$
 V(x,T,\de):=\{\,y\in X\;|\; \forall s\in[0,T]\;\; d(\phi_s(x),\phi_s(y))\le \de\;\}.
 $$

 Given $E,\,F\subset X$ we say that $E$ is a $(T,\de)${\it -spanning set} for $F$
 (or that it $(T,\de)${\it -spans} $F$)
 iff
$$
F \subset \bigcup_{e\in E} V(e,T,\de).
$$
 Let 
 $$
 r(F,T,\de):=\min \{\#E \;|\; E\text{ $(T,\de)$-spans } F \,\}.
 $$
 If $F$ is compact, the continuity of $\phi_t$ implies that $r(F,T,\de)<\infty$.
 Let
 $$
 \ov r_\phi(F,\de) := \limsup_{T\to\infty} \frac 1T \log r(F,T,\de).
 $$
 
 We say that $E\subset X$ is $(T,\de)${\it -separated } if
 $$
 x,\,y\in E, \quad x\ne y \quad \then\quad 
 V(x,T,\de)\cap V(y,T,\de) =\emptyset.
 $$
 Given $F\subset X$ let 
 \begin{align*}
 s(F,T,\de):= \max\{\,\#E\,|\, E\subset F,\; E\text{ is $(T,\de)$-separated }\}.
 \end{align*}
 If $F$ is compact,
 Theorem 6.4 in \cite{Walters2}
 shows that $s(F,T,\de)<\infty$.
 Let
 $$
 \ov s_\phi(F,\de):= \limsup_{T\to+\infty}
 \frac 1T \log s(F,T,\de).
 $$
 Define the {\it topological entropy} by
 $$
 h_{top}(F,\phi):=\lim_{\de\to 0}\ov r_\phi(F,\de)
 =\lim_{\de\to 0}\ov s_\phi(F,\de).
 $$
 By Lemma~1 in \cite{Bowen10} these limits exist and are equal.
 
 In fact the topological entropy of a flow $\phi$ equals the topological
 entropy of the homeomorphism $\phi_1$, and more generally
 $h(\phi_t)= |t|\, h(\phi)$, see Proposition~21 in \cite{Bowen10}.
 
  Denote by $\mathfrak B(X)$ the Borel $\si$-algebra of $X$.
  Let $f:X\to X$ be a measurable map.
  Denote by $\cM(f)$ the set of $f$-invariant Borel probabilities on $X$.
 Given a finite Borel partition  $\A\subset\mathfrak B(X)$ 
 of $X$ and an $f$-invariant Borel
  probability $\mu\in\cM(f)$ define its {\it entropy}
  by
  \begin{align*}
  H_\mu(\A):&=-\sum_{A\in\A}\mu(A)\,\log\mu(A).
  \end{align*}
  Given two finite Borel partitions $\A,\,\B$ of $X$ define $\A\vee \B$ as
  \footnote{More generally $\A\vee \B=\si(\A\cup \B)$ is the $\si$-algebra generated by $\A\cup \B$. 
  This is the definition used for an infinite refinement $\bigvee_{i\in\na} \A_i$. } 
  $$
  \A\vee \B:=\{\, A \cap B\;|\; A\in\A,\, B\in\B\,\}.
  $$
  From Walters~\cite[Theorem~4.10]{Walters}, the map
  \begin{equation}\label{HfNdec}
  \na^+\ni N \mapsto \tfrac 1N \, 
  H_\mu\Big(\textstyle\bigvee_{n=0}^{N-1}f^{-n}(\A)\Big)
  \qquad
  \text{is decreasing.}
  \end{equation}
  Let 
  $$
  h_\mu(f,\A):= \lim_{N} \tfrac 1N\, 
  H_\mu\Big(\textstyle\bigvee_{n=0}^{N-1} f^{-n}(\A)\Big).
  $$
  The (metric) {\it entropy} of $\mu$ under $f$ is defined as
  $$
  h_\mu(f):=\sup\{\, h_\mu(f,\A)\;|\; \A \text{ is a Borel partition of $X$} \,\}.
  $$

  \begin{Theorem}[Variational Principle] (cf. Walters~\cite[Theorem~8.6]{Walters})
  \label{VarPrin}

  Let $f:X\to X$ be a continuous map of a compact metric space $X$. Then
  $$
  h_{top}(X,f)=\sup\{\, h_\mu(f)\;|\; \mu\in\cM(f)\,\}.
  $$
  \end{Theorem}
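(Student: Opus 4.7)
The theorem is established by proving the two inequalities separately. The upper bound $h_\mu(f)\le h_{top}(X,f)$ for each $\mu\in\cM(f)$ is a routine comparison of combinatorial growth rates, while the matching lower bound is obtained by Misiurewicz's averaging construction, and this is where the substantive work lies.

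For the upper bound, fix $\mu\in\cM(f)$ and a finite Borel partition $\A$. The Jensen estimate of Lemma~\ref{A.1} (with $A=1$) gives $H_\mu\bigl(\bigvee_{i=0}^{N-1}f^{-i}\A\bigr)\le \log \#\{\text{non-empty atoms}\}$. When $\diam\A<\de$, each atom of $\bigvee_{i=0}^{N-1}f^{-i}\A$ has $d_N$-diameter $<\de$, so a standard comparison bounds the number of non-empty atoms by $r(X,N,\de)$ up to a fixed combinatorial factor coming from an open cover refinement. Dividing by $N$, letting $N\to\infty$ and then $\de\to 0$, yields $h_\mu(f,\A)\le h_{top}(X,f)$; taking the supremum over $\A$ completes this direction.

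For the lower bound I follow Misiurewicz. Fix $\de>0$ and for each $n\in\na^+$ pick an $(n,\de)$-separated set $E_n\subset X$ of maximal cardinality, so $\#E_n=s(X,n,\de)$. Set
$$
\nu_n:=\tfrac{1}{\#E_n}\sum_{x\in E_n}\delta_x,\qquad \mu_n:=\tfrac{1}{n}\sum_{i=0}^{n-1}f_*^i\nu_n,
$$
and pass to a weak-$*$ subsequential limit $\mu$ along $n_j$ with $\tfrac{1}{n_j}\log s(X,n_j,\de)\to \limsup_n \tfrac{1}{n}\log s(X,n,\de)$; a short approximation argument shows $\mu\in\cM(f)$. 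Using Lemma~\ref{partAB}, pick a finite Borel partition $\A$ with $\diam\A<\de$ and $\mu(\partial A)=0$ for every $A\in\A$. Since $E_n$ is $(n,\de)$-separated and $\diam\A<\de$, each atom of $\bigvee_{i=0}^{n-1}f^{-i}\A$ contains at most one point of $E_n$, so $H_{\nu_n}\bigl(\bigvee_{i=0}^{n-1}f^{-i}\A\bigr)=\log\#E_n$. Now split the window $[0,n)$ into blocks of a fixed length $k$ and use concavity of the entropy functional together with the pushforward identity $H_{f_*^i\nu}(f^{-j}\A)=H_\nu(f^{-(i+j)}\A)$ to produce the Misiurewicz inequality
$$
\tfrac{1}{n}\log\#E_n\le \tfrac{1}{k}H_{\mu_n}\Bigl(\bigvee_{i=0}^{k-1}f^{-i}\A\Bigr)+\tfrac{2k\log\#\A}{n}.
$$
The boundary condition $\mu(\partial A)=0$ makes $\nu\mapsto H_\nu\bigl(\bigvee_{i=0}^{k-1}f^{-i}\A\bigr)$ continuous at $\mu$ in the weak-$*$ topology, so letting $n=n_j\to\infty$ and then $k\to\infty$ yields $\limsup_n\tfrac{1}{n}\log s(X,n,\de)\le h_\mu(f,\A)\le h_\mu(f)$. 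Finally $\de\to 0$ gives $h_{top}(X,f)\le\sup_{\mu\in\cM(f)} h_\mu(f)$.

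The main obstacle is the limit-passage at the end of the lower bound: the quantity $H_{\nu_n}\bigl(\bigvee_{i=0}^{n-1}f^{-i}\A\bigr)$ depends on an $n$-fold refinement that grows with $n$, so it cannot be passed directly through the weak-$*$ limit. One must first trade the long window for fixed-length $k$-blocks via concavity of $\nu\mapsto H_\nu$ (absorbing an error term $2k\log\#\A/n$ that vanishes as $n\to\infty$), and only then invoke the boundary-zero property of $\A$ to push the $k$-block entropy through the limit. The interplay between Misiurewicz's block-averaging trick and the careful partition selection afforded by Lemma~\ref{partAB} is the heart of the argument.
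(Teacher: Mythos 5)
The paper does not prove this theorem; it merely cites Walters \cite[Theorem~8.6]{Walters}, so there is no in-paper argument to compare against. Your proposal is the standard Misiurewicz proof, which is indeed the one Walters presents, and your treatment of the lower bound is a correct outline: the block-averaging inequality, the Krylov--Bogolyubov passage to the invariant limit, and the use of a $\mu(\partial A)=0$ partition (exactly what Lemma~\ref{partAB} is for) to push the fixed-length $k$-block entropy through the weak-$*$ limit are all as in the reference.

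There is, however, a genuine gap in your upper-bound paragraph. You claim that when $\diam\A<\de$ the number of non-empty atoms of $\bigvee_{i=0}^{N-1}f^{-i}\A$ is bounded by $r(X,N,\de)$ ``up to a fixed combinatorial factor coming from an open cover refinement.'' That factor is not fixed: in the standard argument one passes from the partition $\A=\{A_0,\dots,A_k\}$ (after the compact approximation $B_i\subset A_i$, $B_0=X\setminus\bigcup_{i\ge1}B_i$) to the open cover $\beta=\{B_0\cup B_1,\dots,B_0\cup B_k\}$, and each element of $\bigvee_{i=0}^{N-1}f^{-i}\beta$ contains up to $2^N$ atoms of the iterated partition. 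Dividing by $N$ therefore leaves a persistent error $\log 2$ (plus the conditional-entropy term $H_\mu(\A|\eta)\le 1$ from Lemma~\ref{A.1}), giving only $h_\mu(f,\A)\le h_{top}(f)+\log 2+1$. This must then be removed by the power trick: apply the same bound to $f^m$, use $h_\mu(f^m)=m\,h_\mu(f)$ and $h_{top}(f^m)=m\,h_{top}(f)$, divide by $m$ and let $m\to\infty$. As written, your sketch produces the wrong constant and never recovers $h_\mu(f)\le h_{top}(X,f)$. The lower bound, which you correctly identify as the substantive part, is fine.
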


    \begin{Definition}\label{hexpan}\quad
    
  Let $f:X\to X$ be a homeomorphism. For $\e>0$ and $x\in X$ define
  $$
  \Ga_\e(x,f):=\{\,y\in X\;|\;\forall n\in \Z\quad  d(f^n(y),f^n(x))\le \e \,\}.
  $$
  We say that $f$ is {\it entropy expansive} or {\it h-expansive} if there
  is $\e>0$ such that
  $$
  \forall x\in X\qquad h_{\text{top}}(\Ga_\e(x,f),f)=0.
  $$
  Such an $\e$ is called an h-expansive constant for $f$.
    \end{Definition}
 
  The following theorems use the definition from Hurewicz and Wallman \cite{HuWall} of a finite dimensional 
  metric space. They mainly use the property from the Corollary to Theorem~V.1, page~55 in~\cite{HuWall},
  that if the metric space $X$ has dimension $\le m$, then for any $\ga>0$,  $X$ has a covering 
  $\mathfrak B(\ga)$ of diameter $<\ga$, such that  no point in $X$ is in more than $m+1$  
  elements of $\mathfrak B(\ga)$.

  \medskip

  \begin{Theorem}[Bowen~{\cite[Theorem~3.5]{Bowen9}}]\label{TBhE}\quad
  
  Let $X$ be a finite dimensional metric space and $f:X\to X$ a 
  uniformly continuous homeomorphism. Suppose that $\e>0$ is
  an h-expansive constant for $f$. 
  If $\A$ is a finite Borel partition of $X$ with $\diam\A<\e$
  then $h_\mu(f)=h_\mu(f,\A)$.
  \end{Theorem}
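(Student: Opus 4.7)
The plan is to show that for every finite Borel partition $\B$ of $X$ one has $h_\mu(f, \B) \le h_\mu(f, \A)$; taking the supremum over $\B$ then yields $h_\mu(f) = h_\mu(f, \A)$, the reverse inequality being immediate. Replacing $\B$ by $\A \vee \B$ if necessary, I may assume $\B$ refines $\A$, so that $\B_n := \bigvee_{k=0}^{n-1} f^{-k}\B$ refines $\A_n := \bigvee_{k=0}^{n-1} f^{-k}\A$ for every $n\in\na^+$. The decomposition $H_\mu(\B_n) = H_\mu(\A_n) + H_\mu(\B_n \mid \A_n)$ reduces the claim to proving $\tfrac{1}{n} H_\mu(\B_n \mid \A_n) \to 0$. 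Writing this conditional entropy as $\sum_C \mu(C)\, H_{\mu_C}(\B_n|_C) \le \sum_C \mu(C) \log N(C, n)$, where $C$ ranges over atoms of $\A_n$ and $N(C, n)$ is the number of $\B_n$-atoms meeting $C$, and noting the trivial uniform domination $\tfrac{1}{n}\log N(C,n) \le \log \#\B$, dominated convergence further reduces the problem to showing that $\tfrac{1}{n} \log N(\A_n(x), n) \to 0$ for $\mu$-a.e. $x$, where $\A_n(x)$ is the atom of $\A_n$ containing $x$.

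The key geometric input is that $\diam \A < \e$ together with h-expansivity forces every $\A_\infty$-atom $\A_\infty(x) := \bigcap_{k \in \Z} f^{-k}(\A(f^k x))$ to lie inside $\Gamma_\e(x, f)$, on which $f$ has zero topological entropy by Definition~\ref{hexpan}. Consequently, for any fixed small $\gamma>0$, the minimal cardinality of a cover of $\A_\infty(x)$ by $d_n$-balls of radius $\gamma$ is subexponential in $n$. The finite-dimensionality of $X$, via the Corollary to Theorem~V.1 on p.~55 of Hurewicz--Wallman, supplies a cover $\mathfrak B(\gamma)$ of $X$ of diameter $<\gamma$ with multiplicity $\le m+1$; choosing $\gamma$ smaller than a Lebesgue number of $\B$, each element of $\mathfrak B(\gamma)$ meets at most $m+1$ cells of $\B$, and this lets me bound $N(\A_n(x), n)$ by the number of $d_n$-balls of radius $\gamma$ needed to cover $\A_\infty(x)$, at the cost of a multiplicity factor that, after passing to $f^N$ for large $N$ (using $h_\mu(f^N)=N\,h_\mu(f)$), is made subexponential in $n$. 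Combining these bounds gives the required pointwise estimate $\tfrac{1}{n} \log N(\A_n(x), n) \to 0$.

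The main obstacle is precisely the multiplicative overhead produced by the cover multiplicity $m+1$: the naive estimate produces a spurious $(m+1)^n$ factor that is not subexponential. Following Bowen, one sidesteps this by passing to $f^N$ for large $N$ so that the multiplicity overhead enters additively per block rather than multiplicatively per step, combined with choosing $\gamma$ small enough that the zero topological entropy of $\Gamma_\e(x,f)$ dominates the geometric overhead. Making the estimate uniform enough across $x$ to apply dominated convergence, using the compactness of $X$ together with a measurable choice of local $\gamma$-covers adapted to the $\A_\infty$-atom through $x$, is the technical heart of Bowen's proof and the essential use of the finite-dimensionality hypothesis.
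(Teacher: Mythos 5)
The paper does not prove this theorem; it is cited verbatim to Bowen, \emph{Entropy-expansive maps}, Trans.\ Amer.\ Math.\ Soc.\ \textbf{164} (1972), Theorem~3.5, so there is no ``paper's own proof'' to compare against. Measured against Bowen's actual argument, your outline reproduces the correct skeleton: reduce to $h_\mu(f,\B)\le h_\mu(f,\A)$ via $H_\mu(\B_n)=H_\mu(\A_n)+H_\mu(\B_n\mid\A_n)$, observe that the doubly-infinite $\A$-atom through $x$ sits inside $\Ga_\e(x,f)$ (because $\diam\A<\e$), invoke zero topological entropy of $\Ga_\e(x,f)$, and use the Hurewicz--Wallman bounded-multiplicity cover together with the $f^N$ trick to suppress the $(m+1)$ multiplicity overhead. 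These are indeed the ingredients of Bowen's proof.

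However, as written this is a plan rather than a proof, and the two places you yourself flag as ``the technical heart'' are precisely where the real work lies, so I want to name them concretely. First, the passage from $h_{top}(\Ga_\e(x,f),f)=0$ to $\tfrac1n\log N(\A_n(x),n)\to 0$ is not immediate: $\A_n(x)$ is a finite one-sided intersection, it does not shrink to $\Ga_\e(x,f)$, and the rate at which the finite intersections approach their limit can vary with $x$. Bowen's argument controls this by a compactness/uniformity statement over all $x\in X$ (and by symmetrizing to the two-sided refinement $\bigvee_{|j|\le n}f^{-j}\A$, which is entropy-neutral), producing a single subexponential bound $N(\A_n(x),n)\le K_n$ valid for \emph{every} $x$ rather than a pointwise-a.e.\ limit fed into dominated convergence. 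Second, your line ``choosing $\ga$ smaller than a Lebesgue number of $\B$, each element of $\mathfrak B(\ga)$ meets at most $m+1$ cells of $\B$'' conflates two different things: a Borel partition has no Lebesgue number, and an element of the dimension cover of small diameter can still meet arbitrarily many cells of $\B$ along their boundaries. Bowen sidesteps this by phrasing the dimension lemma for \emph{open covers} and then separately comparing open-cover counts to partition entropies; that comparison, together with the uniform-over-$x$ covering estimate, is exactly what makes the dominated-convergence step legitimate and is the part missing from your sketch.
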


\medskip
  
  \begin{Definition}\label{dexpans}\quad
  
  A continuous flow $\phi:\re\times X\to X$ is said {\it flow expansive}
  (cf.~\cite{BoWa})
  if for every $\eta>0$ there exists $\de>0$ such that if
  $x,\,y\in X$ and $\a:\re\to\re$ is continuous with $\a(0)=0$
  such that $\forall t\in\re$ \;\; $d(\phi_t(x),\phi_{\a(t)}(y))\le \de$,
  then $y=\phi_v(x)$ for some $|v|\le\eta$.
  Observe that if a flow $\phi$ is flow expansive then its time~$1$ map $\phi_1$
  is entropy expansive.
  \end{Definition}
 
    For a continuous flow $\phi:\re\times X\to X$, the {\it metric entropy} 
  $h_\mu(\phi)$ of a $\phi$-invariant Borel probability measure $\mu$
  is defined as the metric entropy of its time one map
  $h_\mu(\phi):=h_\mu(\phi_1)$.

   \begin{Definition}\label{duhe}\quad
   
   Let $\cU$ be a topological subspace of $C^0(X,X)\supset\cU$ and 
   $Y\subseteq X$ compact.
   We say that $\cU$ is {\it uniformly h-expansive} on $Y$
   if there is $\e>0$ such that 
   $$
   \forall f\in\cU\quad \forall y\in Y
   \qquad
   h_{\text{top}}(\Ga_\e(y,f),f)=0.
   $$
   In our applications $\cU$ will be a $C^1$ neighbourhood of a diffeomorphism
   endowed with the $C^0$ topology. An h-expansive homeomorphism corresponds
   to $\cU=\{ f\}$.
   \end{Definition}

  Let $\cP(X)$ be the set of Borel probability measures on $X$ endowed 
  with the weak* topology. Let $\cM(f)\subset \cP(X)$ the subspace of $f$-invariant
  probabilities.

 \medskip
 
 \begin{Theorem}\quad\label{ESS}
 
Let $X$ be a finite dimensional compact metric space, $Y\subseteq X$ compact  and 
let 
\linebreak 
$\cU\subset C^0(X,X)$ be a uniformly h-expansive set on $Y$.
 Then the entropy map
$(\mu,f)\mapsto h_\mu(f)$ is upper semicontinuous on $\cU$,
i.e.

if $f\in\cU$, $\mu\in\cM(f)$ and $\supp\mu\subset Y$ then
given $\e>0$ there are open sets $\cV$, $U$,
\linebreak
$f\in\cV\subset C^0(X,X)$ and $\mu\in U\subset \cP(Y)$ such that 
$$
\forall g\in\cU\cap\cV,\quad
\forall\nu\in\cM(g)\cap U\cap\cP(Y)
\qquad 
h_\nu(g)\le h_\mu(f)+\e.
$$

In particular, this applies to time-one maps of uniformly expansive flows
as in section~\ref{suexpan} or Remark~\ref{rue}, giving
$$
\limsup_{(\psi^n_1,\nu_n)\to(\phi_1,\mu)}h_{\nu_n}(\psi^n)\le h_\mu(\phi).
$$
\end{Theorem}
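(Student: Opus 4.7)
The plan is to combine Bowen's partition-entropy theorem (Theorem~\ref{TBhE}), a joint continuity argument at finite refinement level, and the monotone approximation \eqref{HfNdec} of $h_\mu(f,\A)$ by $\tfrac1N H_\mu\bigl(\bigvee_{n=0}^{N-1}f^{-n}\A\bigr)$.

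First I fix a common h-expansivity constant $\e_0>0$ for the family $\cU$ on $Y$ (Definition~\ref{duhe}) and apply Lemma~\ref{partAB} to $\mu$ to obtain a finite Borel partition $\A=\{A_1,\dots,A_r\}$ of $X$ with $\diam\A<\e_0$ and $\mu(\partial A_i)=0$ for every $i$. Since $\supp\nu\subset Y$ for every measure under consideration, Bowen's Theorem~\ref{TBhE} (which uses only that $\e_0$ is an h-expansivity constant along $\nu$-typical orbits) yields $h_\mu(f)=h_\mu(f,\A)$ and $h_\nu(g)=h_\nu(g,\A)$ for every $g\in\cU$ and $\nu\in\cM(g)\cap\cP(Y)$. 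Given $\e>0$, \eqref{HfNdec} supplies $N\in\mathbb{N}$ with $\tfrac1N H_\mu\bigl(\bigvee_{n=0}^{N-1}f^{-n}\A\bigr)\le h_\mu(f)+\e/2$; since by the same monotonicity $h_\nu(g,\A)\le \tfrac1N H_\nu\bigl(\bigvee_{n=0}^{N-1}g^{-n}\A\bigr)$, it is enough to find neighbourhoods $\cV\ni f$ in $C^0(X,X)$ and $U\ni\mu$ in $\cP(Y)$ so that
$$
\tfrac1N H_\nu\Big(\textstyle\bigvee_{n=0}^{N-1}g^{-n}\A\Big)\le\tfrac1N H_\mu\Big(\textstyle\bigvee_{n=0}^{N-1}f^{-n}\A\Big)+\tfrac\e2
$$
for all $g\in\cV\cap\cU$ and $\nu\in U\cap\cM(g)\cap\cP(Y)$.

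For this finite-level continuity, index cells by $\alpha=(i_0,\dots,i_{N-1})$ and set $C_\alpha^g:=\bigcap_{n=0}^{N-1}g^{-n}(A_{i_n})$. The $f$-invariance of $\mu$ combined with $\mu(\partial A_i)=0$ yields $\mu\bigl(\bigcup_{n,i}f^{-n}(\partial A_i)\bigr)=0$, so $\mu(\partial C_\alpha^f)=0$ for every word $\alpha$. For $g$ sufficiently $C^0$-close to $f$ one has $\sup_{x\in X} d(g^n(x),f^n(x))<\delta$ for all $n<N$, and the direct check (if $g^n(x)\in A_{i_n}$ and $f^n(x)\notin A_{i_n}$ then $f^n(x)$ lies within $\delta$ of $\partial A_{i_n}$) gives
$$
C_\alpha^g\,\triangle\, C_\alpha^f\;\subset\;\bigcup_{n=0}^{N-1}f^{-n}\bigl(\{y\in X\,:\,d(y,\partial A_{i_n})\le\delta\}\bigr),
$$
a closed set whose $\mu$-measure is bounded by $\sum_{n,i}\mu(\{y\in X:d(y,\partial A_i)\le\delta\})\to 0$ as $\delta\to 0$. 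Portmanteau applied to this closed set bounds $\nu(C_\alpha^g\triangle C_\alpha^f)$; Portmanteau applied to $C_\alpha^f$ (using $\mu(\partial C_\alpha^f)=0$) gives $\nu(C_\alpha^f)\to\mu(C_\alpha^f)$; and continuity of $-x\log x$ then delivers the required upper semicontinuity of $H_\nu\bigl(\bigvee g^{-n}\A\bigr)$.

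The main obstacle is precisely this finite-level joint continuity: the cells $C_\alpha^g$ drift as $g$ varies in $C^0$, and $\nu$ need not be absolutely continuous with respect to $\mu$. Both issues are defeated by the zero-boundary condition on $\A$, propagated to each $f^{-n}(A_i)$ via $f$-invariance of $\mu$, together with uniform convergence $g^n\to f^n$ on the compact space $X$. The flow assertion at the end of the theorem is then immediate by applying the discrete result to the time-one maps $\psi^n_1\to\phi_1$ and invoking the convention $h_\mu(\phi):=h_\mu(\phi_1)$.
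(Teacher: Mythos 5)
Your route is genuinely different from the paper's, and it is a natural and slightly more economical one, but as written it has a gap at the ``symmetric difference'' step.

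Both proofs open the same way: fix a uniform h-expansivity constant $\e_0$, take a partition of small diameter so Theorem~\ref{TBhE} gives $h_\nu(g)=h_\nu(g,\A)$, and use the monotonicity~\eqref{HfNdec} to pin the problem on a fixed level $N$. From there the paper goes ``inward'': by regularity it extracts compact sets $K_{i_0\dots i_{N-1}}$ inside each $N$-cell, builds a slightly different partition $\D$ whose $N$-cells contain those compacts in their interiors, wedges open sets $W$ between $K$ and $\intt\bigcap g^{-k}D_{i_k}$ (the condition $g^k(\ov W)\subset\intt D_{i_k}$ being open in $C^0$ by compactness), and detects weak* proximity via Urysohn test functions supported in $W$. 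You instead keep the \emph{same} partition $\A$ for all $g$ near $f$, rely on the $\mu$-null-boundary property from Lemma~\ref{partAB}, and control $|\nu(C_\alpha^g)-\mu(C_\alpha^f)|$ by the Portmanteau theorem. This is simpler and entirely avoids the auxiliary partition $\D$, the compact sandwiching, and the Urysohn functions.

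The gap is in the inclusion
\begin{equation*}
C_\alpha^g\,\triangle\, C_\alpha^f\;\subset\;\bigcup_{n=0}^{N-1} f^{-n}\bigl(\{y:d(y,\partial A_{i_n})\le\delta\}\bigr),
\end{equation*}
which rests on the implication ``$a\in A$, $b\notin A$, $d(a,b)<\delta\ \Rightarrow\ d(b,\partial A)\le\delta$''. That implication is valid when $X$ is path-connected (a path from $a$ to $b$ must cross $\partial A$), but it can fail in a totally disconnected compact metric space: if $A$ is a clopen piece, $\partial A=\emptyset$ and the right-hand side is empty while the left-hand side need not be. More quantitatively, take $X=\{0\}\cup\{2^{-n}\}_{n\ge 0}\cup\{2^{-n}+2^{-2n}\}_{n\ge 0}\subset\re$ and $A=\{2^{-n}\}_{n\ge 0}$; then $\partial A=\{0\}$, and for $n$ large the points $a=2^{-n}\in A$ and $b=2^{-n}+2^{-2n}\notin A$ have $d(a,b)=2^{-2n}$ far smaller than $d(b,\partial A)\approx 2^{-n}$. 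Since the theorem is stated for an arbitrary finite-dimensional compact metric space, not a manifold (and note also that Lemma~\ref{partAB} in the paper is stated only for compact Riemannian manifolds, so you implicitly need its metric-space analogue), this inclusion is not available. The fix is cheap: replace the $\delta$-neighbourhood of $\partial A_{i_n}$ by
\begin{equation*}
F_{i_n,\delta}:=\{x: d(x,A_{i_n})\le\delta\ \text{and}\ d(x,X\setminus A_{i_n})\le\delta\},
\end{equation*}
which is closed, contains both members of any separated pair at distance $<\delta$, and satisfies $\bigcap_{\delta>0} F_{i_n,\delta}=\ov{A_{i_n}}\cap\ov{X\setminus A_{i_n}}=\partial A_{i_n}$, whence $\mu(F_{i_n,\delta})\to 0$ as $\delta\to 0$. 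With $E_\delta:=\bigcup_n f^{-n}(F_{i_n,\delta})$ the inclusion $C_\alpha^g\triangle C_\alpha^f\subset E_\delta$ does hold and the rest of your Portmanteau argument goes through unchanged. Once patched, your proof works in the stated generality and is a legitimate alternative to the one in the paper.
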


\begin{proof}
\quad

Let $\e>0$ be a uniform entropy h-expansivity constant on $Y$ for all $f\in\cU$.
Fix $f\in\cU$ and $\mu\in\cM(f)$ and let $\de>0$. 

Let $\C=\{\, C_1,\ldots, C_n\,\}$ be a finite partition of $Y$
by Borel sets with $\diam C_i<\e$. By Theorem~\ref{TBhE},
$h_\nu(g)=h_\nu(g,\C)$ for all $g\in\cU$ and $\nu\in\cM(g)$.
Let $N$ be such that 
\begin{equation}\label{fixN}
\frac 1N\, H_\mu\Big(\textstyle\bigvee\limits_{k=0}^{N-1} f^{-k}\C\Big)
< h_\mu(f)+\tfrac 12\de.
\end{equation}
Since $\mu$ is regular, there are compact sets
$$
K_{i_0\ldots i_{N-1}}\subset\textstyle\bigcap\limits_{k=0}^{N-1} f^{-k}C_{i_k}
$$
such that
\begin{equation}\label{muck}
\mu\Big(\bigcap_{k=0}^{N-1} f^{-k}C_{i_k}\setminus K_{i_0\cdots i_{N-1}}\Big)<\e_1.
\end{equation}
Then
$$
L_j:=\textstyle \bigcup\limits_{k=0}^{N-1}\bigcup\limits_{i_k=j}
f^k K_{i_0\ldots i_{N-1}}
\subset C_j.
$$
The sets $L_1,\ldots, L_n$ are compact and disjoint so there is 
a partition $\D:=\{ D_1,\ldots , D_n \}$ with $\diam(D_j)<\e$ and 
$L_j\subset \intt(D_j)$.
We have that 
$$
K_{i_0\ldots i_{N-1}}\subset \intt\Big(\textstyle\bigcap\limits_{k=0}^{N-1}
f^{-k} D_{i_k}\Big).
$$
Choose open subsets $W_{i_0\ldots i_{N-1}}$ such that
$$
K_{i_0\ldots i_{N-1}}\subset 
W_{i_0\ldots i_{N-1}}\subset
\ov{W_{i_0\ldots i_{N-1}}} \subset
\intt\Big(\textstyle\bigcap\limits_{k=0}^{N-1}
f^{-k} D_{i_k}\Big).
$$
We have that 
$$
f^k\big(\ov{W_{i_0\ldots i_{N-1}}} \big)
\subset \intt D_{i_k}.
$$
Choose a relatively open subset $f\in\cU_1\subset \cU$ such that 
$$
\forall g\in\cU_1\quad \forall (i_0,\ldots, i_{N-1})
\quad
\forall k\in\{0,\ldots,N-1\}
\qquad
g^k\big(\ov{W_{i_0\ldots i_{N-1}}} \big)
\subset \intt D_{i_k}.
$$
So that 
$$
\forall g\in\cU_1\qquad
K_{i_0\ldots i_{N-1}}\subset 
W_{i_0\ldots i_{N-1}}\subset
\intt\Big(\textstyle\bigcap\limits_{k=0}^{N-1}
g^{-k} D_{i_k}\Big).
$$
By Urysohn's Lemma there exist $\psi_{i_0\ldots i_N}\in C^0(X,\re)$ such that 
\begin{itemize}
\item $0\le \psi_{i_0\ldots i_N}\le 1$.
\item equals $1$ on $K_{i_0\ldots i_{N-1}}$.
\item vanishes on $X\setminus W_{i_0\ldots i_{N-1}}$.
\end{itemize}
Define
$$
U_{i_0\ldots i_{N-1}}:=\Big\{ m\in\cP(Y)\;:\;
\Big|\int \psi_{i_0\ldots i_{N-1}}\,dm-\int \psi_{i_0\ldots i_{N-1}}\,d\mu\Big|
<\e_1 \Big\}.
$$
The set $U_{i_0\ldots i_{N-1}}$ is open in $\cP(Y)$
and if $m\in U_{i_0\ldots i_{N-1}}$ and $g\in\cU_1$ then
\begin{align}\label{mDk}
m\Big({\textstyle\bigcap\limits_{k=0}^{N-1}} g^{-k} D_{i_k}\Big)
\ge \int \psi_{i_0\ldots i_{N-1}}\,dm
>\int \psi_{i_0\ldots i_{N-1}}\,d\mu-\e_1
\ge \mu\big(K_{i_0\ldots i_{N-1}}\big) -\e_1.
\end{align}
From~\eqref{muck} and~\eqref{mDk} we get that
$$
\mu\Big(\textstyle\bigcap\limits_{k=0}^{N-1}f^{-k}C_{i_k}\Big)
-m\Big(\textstyle\bigcap\limits_{k=0}^{N-1}g^{-k}D_{i_k}\Big)
< 2 \e_1.
$$
If $U:=\bigcap_{i_0\ldots i_{N-1}}U_{i_0\ldots i_{N-1}}$ and $m\in U$  and
$g\in\cU_1$
 then:
$$
\Big|\mu\Big(\textstyle\bigcap\limits_{k=0}^{N-1} f^{-k}C_{i_k}\Big)
-m\Big(\bigcap\limits_{k=0}^{N-1}g^{-k}D_{i_k}\Big)\Big|
< 2\e_1 n^N,
$$
because if $\sum_{i=1}^q a_i=1=\sum_{i=1}^qb_i$ and there exists
$c>0$ such that $a_i-b_i<c$ for all $i$, then $\forall i$ $|a_i-b_i|<c\,q$,
because $b_i-a_i=\sum_{j\ne i}(a_j-b_j)<c\,q$.

So if $m\in U$, $g\in\cU_1$ and $\e_1$ is small enough, for $N$ fixed in \eqref{fixN}, the continuity of 
$x\,\log x$ gives:
$$
\frac 1N\,H_m\Big({\textstyle\bigvee\limits_{k=0}^{N-1}} g^{-k}\D\Big)
<
\frac 1N\,H_\mu\Big({\textstyle\bigvee\limits_{k=0}^{N-1}} f^{-k}\C\Big)
+\frac\de 2.
$$
Hence, for $g\in\cU_1$, $m\in U\cap\cM(g)$  and $\e_1$ small enough,
using  Theorem~\ref{TBhE} and \eqref{HfNdec}  we have that
\begin{align*}
h_m(g)&=h_m(g,\D)
\le \frac 1N\,H_m\Big(\textstyle\bigvee\limits_{k=0}^{N-1} g^{-k}\D\Big)
\\
&\le \frac 1N\,H_\mu\Big({\textstyle\bigvee\limits_{k=0}^{N-1}} f^{-k}\C\Big)
+\frac\de 2
< h_\mu(f)+\de.
\end{align*}
\end{proof}

   \bigskip

  \section{Symbolic Dynamics.}\label{ASD}
  
  Let $\cA:=\{T_1,\ldots,T_M\}$ be a finite set, 
  called the set of symbols, and let
  $$
  \cA^\Z =\textstyle\prod\limits_{n\in\Z}\cA
  =\big\{\,(x_i)_{i\in\Z}\;\big|\;\forall i\in\Z\;\;\; x_i\in \cA\,\big\}.
  $$
  We denote $\ox=(x_i)_{i\in\Z}$ and $\ox_i =x_i$.
  Endow $\cA$ with the discrete topology and $\cA^\Z$ with the 
  product topology. By Tychonoff Theorem $\cA^\Z$ is compact. 
  Given $a>1$ the metric 
  \begin{equation*}
  d_a(\ox,\oy)= a^{-n}, 
  \qquad 
  n=\max\{\, k\in\na \; |\; \forall|i|\le k,\quad x_i=y_i\,\}  
  \end{equation*}
  induces the same topology.
  
  The {\it shift} homeomorphism $\si:\cA^\Z\to\cA^\Z$ is defined by
  $\si(\ox)_i=x_{i+1}$. A subset $\Om\subset\cA^\Z$ is called a {\it subshift}
  if $\Om$ is closed and $\si(\Om)=\Om$. 
  We call $\Om$ a {\it subshift of finite type} iff  
  there is a function
  $A:\cA\times\cA\to\{0,1\}$ (or equivalently a matrix $A\in\{0,1\}^{M\times M}$)
  such that
  $$
  \Om=\Si(A):=\{\,\ox\in \cA^\Z\;|\; \forall i\in\Z \quad A(x_i,x_{i+1})=1\;\}.
  $$
  
  Suppose that $\Om$ is a subshift and $\tau:\Om\to\re^+$ is a positive continuous function. 
  The {\it suspension} $S(\Om,\si,\tau)$ is defined as the topological quotient
  space $S(\Om,\si,\tau)=\Om\times\re/_\equiv$, where
  \begin{equation}\label{som}
  \forall (x,s)\in\Om\times\re\qquad \big(x, \,s+\tau(x)\big) \equiv \big(\si(x), s\big).
  \end{equation}
  Equivalently,
  $$
  \forall (x,s)\in\Om\times\re \quad \forall n\in\Z \qquad
  \Big(x,\; s+\tsum_{i=0}^{n-1}\tau(\si^i(x))\Big)
  \equiv\big(\si^n(x),s\big).
  $$
  Then $S(\Om,\si,\tau)$ is a compact metrizable space. A metric appears in \cite{BoWa}.
  
  We obtain the  {\it suspension} flow $S_t=sus_t(\Om,\si,\tau)$ by ``flowing vertically'' and remembering identifications, i.e.
  \begin{equation}\label{sust}
  S_t(P(x,s)) = P(x,s+t),
   \end{equation}
  where $P:\Om\times\re \to S(\Om,\si,\tau)$ is the canonical projection. 
  
  \begin{Definition}\label{DHSF}\quad
  
  A {\it hyperbolic symbolic flow} is a suspension flow $sus_t(\Si_A,\si,\tau)$
   on $S(\Si_A,\si,\tau)$,
  where $\Si_A$ is a subshift of finite type and $\tau:\Si_A\to\re$ is positive and
  Lipschitz with respect to the metric $d_a$ for some $a>1$.
  \end{Definition}

     \bigskip

   \bigskip
 
  \section{Uniform Expansivity and Shadowing.}\label{asha}
  
  Let $\phi$ be the flow of a $C^1$ vector field on a compact manifold $M$.
  A compact $\phi$-invariant subset  $\La\subset M$
  is {\it  hyperbolic } for $\phi$ if the tangent bundle 
  restricted to $\La$ is decomposed as the Whitney sum
  $T_\La M = E^s \oplus E\oplus E^u$, where $E$ is the 1-dimensional 
  vector bundle tangent to the flow and there are constants $C,\la>0$ such that
  \begin{enumerate}[(a)]
  \item $D\phi_t(E^s) = E^s$, $D\phi_t(E^u)=E^u$ for all $t\in\re$
  \item\label{hipb} $| D\phi_t(v)| \le C\,\ee^{-\la t} |v|$ for all $v\in E^s$, $t\ge 0$.
  \item\label{hipc} $|D\phi_{-t}(u)|\le C\, \ee^{-\la t} |u|$ for all $u\in E^u$, $t\ge 0$.
  \end{enumerate} 
  It follows from the definition that the hyperbolic splittig 
  $E^s \oplus E\oplus E^u$ over $\La$ is continuous.

  From now on we shall assume that $\La$ does not contain fixed points
  for $\phi$.
  For $x\in \La$ define the following   stable and unstable sets:
  \begin{align}
  W^{ss}(x):&=\{\,y\in M\;|\; d(\phi_t(x),\phi_t(y))\to 0 \text{ as }t\to +\infty\,\},
  \notag\\
  W^{ss}_\e(x):&=\{\,y\in W^{ss}(x)\;|\;d(\phi_t(x),\phi_t(y))\le\e\;\;\forall t\ge 0\,\},
  \notag\\
  W^{uu}(x):&=\{\,y\in M\;|\; d(\phi_{-t}(x),\phi_{-t}(y))\to 0\text{ as } t\to +\infty\,\},
  \notag\\
  W^{uu}_\e(x):&=\{\,y\in W^{uu}(x)\;|\;d(\phi_{-t}(x),\phi_{-t}(y))\le \e\;\;\forall t\ge 0\,\},
  \label{wuue}\\
  \intertext\quad
  W^s_\e(x):&=\{\,y\in M\;|\;d(\phi_t(x),\phi_t(y))\le\e\;\;\forall t\ge 0\,\},
  \notag\\
   W^{u}_\e(x):&=\{\,y\in M\;|\;d(\phi_{-t}(x),\phi_{-t}(y))\le \e\;\;\forall t\ge 0\,\}.
   \notag
  \end{align}
  
  \medskip 
  
  Conditions~\eqref{hipb} and ~\eqref{hipc} are equivalent to
  \begin{enumerate}[(a)]
  \addtocounter{enumi}{3}
  \item\label{hipd} There exists $T>0$ such that
   $\lV D\phi_T|_{E^s}\rV <\tfrac 12$ \quad and \quad
   $\lV D\phi_{-T}|_{E^u}\rV<\tfrac 12$.
  \end{enumerate}

  \subsection{Uniform expansivity.}\quad

  Let $\fX^k(M)$ be the Banach manifold of the $C^k$ vector fields on $M$, $k\ge 1$.
  From now on $X\in\fX^k(M)$, $\phi_t$ is the flow of $X$ and $\La$ is a hyperbolic set for $\phi_t$.

  \begin{Proposition}\label{unifhip}\quad
  
  There are open sets $X\in\cU\subset\fX^1(M)$ and $\La\subset U\subset M$ 
  such that for every $Y\in\cU$ the set $\La_Y:=\bigcap_{t\in\re}\psi^Y_t(\ov U)$
  is hyperbolic for the flow $\psi^Y_t$ of $Y$, with uniform constants $C$, $\la$, $T$
  on~\eqref{hipb}, \eqref{hipc} and \eqref{hipd}.
  \end{Proposition}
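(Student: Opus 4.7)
The plan is to reduce hyperbolicity to a cone criterion on the time-$T$ map, because the cone criterion is an open condition in the $C^0$-topology on $D\phi_T$, while $D\phi_T|_{\ov U}$ depends continuously on $Y$ in the $C^1$-topology on $\fX^1(M)$. Combining these yields both persistence of hyperbolicity and the needed uniformity of constants in a single stroke.

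First, I would fix $T>0$ so that on $\La$
\[
\lV D\phi_T|_{E^s}\rV\le\tfrac14,\qquad \lV D\phi_{-T}|_{E^u}\rV\le\tfrac14,
\]
giving a safety margin beyond condition~(d). Using continuity of the hyperbolic splitting on $\La$, I would extend $E^s$ and $E^u$ to continuous (in general non-invariant) subbundles $F^s$, $F^u$ of $TM$ on a compact neighborhood $\ov U\supset\La$, so that $T_xM=F^s(x)\oplus\langle X(x)\rangle\oplus F^u(x)$ for $x\in\ov U$. I would then define standard cone fields
\begin{align*}
K^u_\a(x)&:=\{\,v=v^s+v^c+v^u :\lV v^s\rV+\lV v^c\rV\le\a\,\lV v^u\rV\,\},\\
K^s_\a(x)&:=\{\,v=v^s+v^c+v^u :\lV v^u\rV+\lV v^c\rV\le\a\,\lV v^s\rV\,\},
\end{align*}
for a small $\a>0$, chosen small enough that on $\La$ the map $D\phi_T$ sends $K^u_\a(x)$ strictly into $K^u_{\a/2}(\phi_T(x))$ and expands its vectors by a factor $\ge 2$, and symmetrically for $D\phi_{-T}$ and $K^s_\a$.

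Second, the continuous dependence of $D\psi^Y_T|_{\ov U}$ on $Y\in\fX^1(M)$ (a standard Gr\"onwall estimate on the variational equation over the fixed interval $[0,T]$) lets the strict cone inclusion and the expansion factor $\ge 2$ survive small $C^1$-perturbations. Shrinking $\ov U$ around $\La$ and choosing a $C^1$-neighborhood $\cU$ of $X$, I obtain: for every $Y\in\cU$ and every $x\in\ov U$ with $\psi^Y_T(x)\in\ov U$,
\[
D\psi^Y_T(x)\,K^u_\a(x)\subset\intt K^u_{\a/2}(\psi^Y_T(x)),\qquad \lV D\psi^Y_T(x)\,v\rV\ge 2\,\lV v\rV\;\;\forall v\in K^u_\a(x),
\]
together with the dual statement for $D\psi^Y_{-T}$ and $K^s_\a$. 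For $x\in\La_Y$ the whole orbit stays in $\ov U$, so I can iterate and recover the stable and unstable bundles by the usual nested-cone construction,
\[
E^u_Y(x):=\bigcap_{n\ge 0}D\psi^Y_{nT}\big(K^u_\a(\psi^Y_{-nT}(x))\big),\quad E^s_Y(x):=\bigcap_{n\ge 0}D\psi^Y_{-nT}\big(K^s_\a(\psi^Y_{nT}(x))\big).
\]
The uniform cone contraction forces these intersections to be linear subspaces of the correct dimensions, continuous in $x$ and $D\psi^Y_t$-invariant. The factor-$2$ estimates over time $T$ give (d) for $\psi^Y_T$ on $E^s_Y$ and $E^u_Y$, and hence (b), (c) with $\la<(\log 2)/T$ and a constant $C$ governed only by the ratio between the ambient norm and a norm adapted to the fixed cone fields on $\ov U$. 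Since $T$, $\a$, $F^s$, $F^u$, $\ov U$ and $\cU$ are all chosen before $Y\in\cU$ is selected, the resulting $C$, $\la$, $T$ are genuinely uniform in $Y\in\cU$.

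The main obstacle is arranging the continuity $Y\mapsto D\psi^Y_T|_{\ov U}$ in the $C^1$-topology with \emph{uniform} bounds over $\cU$, rather than just separate openness of the cone conditions at each individual $Y$. The needed uniformity comes from a Gr\"onwall estimate on $[0,T]$ with a constant depending only on $\sup_{Y\in\cU}\lV Y\rV_{C^1}$; once this is in hand, every subsequent estimate is uniform over $\cU$, precisely because the cone fields $K^u_\a,K^s_\a$ themselves do not move with $Y$ and serve every perturbation with the same strict-invariance and expansion bounds.
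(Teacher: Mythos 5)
Your proposal is correct and takes essentially the same approach the paper indicates: the paper's entire proof consists of the remark that Proposition~\ref{unifhip} follows from the cone characterization of hyperbolicity (citing Fisher--Hasselblatt Prop.~5.1.7 and Hasselblatt--Katok Prop.~17.4.4), with uniformity extracted from a fixed iterate; your write-up simply unwinds that citation into the explicit cone-field/Gr\"onwall argument.
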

  
   Proposition~\ref{unifhip} can be proven by a characterization of hyperbolicity using
   cones (cf. Fisher-Hasselblatt~\cite[Proposition~5.1.7]{FH}, Hasselblatt-Katok~\cite[Proposition~17.4.4]{HK}) and obtaining uniform 
   contraction (expansion) for a fixed iterate in $\La_Y$.

  \medskip

  \begin{Proposition}[Hirsch, Pugh, Shub {\cite[Corollary 5.6, p.~63]{HPS}}, 
  Bowen~{\cite[Prop. 1.3]{Bowen6}}]
  \label{pHPS}\quad

  There are constants $C,\,\la>0$ such that, for small $\e$,
  \begin{enumerate}[(a)]
  \item $d\big(\phi_t(x),\phi_t(y)\big)\le C\, \ee^{-\la t}\, d(x,y)$
  when $x\in \La$, $y\in W_\e^{ss}(x)$, $t\ge 0$.
  \item $d\big(\phi_{-t}(x),\phi_{-t}(y)\big)\le C\, \ee^{-\la t}\, d(x,y)$
  when $x\in \La$, $y\in W_\e^{uu}(x)$, $t\ge 0$.

  \end{enumerate} 
  \end{Proposition}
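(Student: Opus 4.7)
The plan is to derive (a) from the uniform contraction $\lV D\phi_T|_{E^s}\rV<\tfrac12$ furnished by condition~(d), iterated at time-$T$ intervals, using that for $\e$ small the local strong stable disc $W^{ss}_\e(x)$ is a $C^1$ embedded submanifold whose tangent bundle stays in an arbitrarily narrow cone around $E^s$. Case~(b) will then follow by applying the same argument to the time-reversed flow and $W^{uu}_\e$.

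First, I would invoke the stable manifold theorem for hyperbolic flows (as in Hirsch--Pugh--Shub): for $\e$ small, $W^{ss}_\e(x)$ is a $C^1$-embedded disc through $x$, tangent at $x$ to $E^s(x)$, depending $C^1$-continuously on $x\in\La$. From continuity of a local extension of $E^s$ to a neighbourhood of $\La$, and of the tangent planes of $W^{ss}_\e(x)$, one arranges that $T_y W^{ss}_\e(x)$ lies in a prescribed narrow cone around $E^s$ uniformly in $x\in\La$ and $y\in W^{ss}_\e(x)$. Combining this with condition~(d), compactness of $\La$, and continuity of $D\phi_T$ yields, for $\e$ sufficiently small, the pointwise bound $\lV D\phi_T(y)\,v\rV\le\tfrac34\lV v\rV$ for every $v\in T_y W^{ss}_\e(x)$, uniformly over $x\in\La$ and $y\in W^{ss}_\e(x)$.

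Given $y\in W^{ss}_\e(x)$, I join $x$ to $y$ by a length-minimising curve $\ga$ inside $W^{ss}_\e(x)$. Since $W^{ss}_\e(x)$ is $C^1$-close to its tangent space at $x$, the intrinsic-versus-extrinsic length distortion can be made as small as desired, so $\length(\ga)\le(1+\alpha)\,d(x,y)$ with $\alpha$ small. The image $\phi_T\circ\ga$ joins $\phi_T(x)$ to $\phi_T(y)$ inside $W^{ss}_\e(\phi_T(x))$, by invariance of the strong stable discs, and has length at most $\tfrac34(1+\alpha)\,d(x,y)$. Setting $\kappa:=\tfrac34(1+\alpha)<1$ gives $d(\phi_T(x),\phi_T(y))\le\kappa\,d(x,y)$. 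Since $\phi_T(y)\in W^{ss}_\e(\phi_T(x))$ directly from the definition of $W^{ss}_\e(x)$, the same estimate applies at $\phi_T(x)$, and inductively $d(\phi_{nT}(x),\phi_{nT}(y))\le\kappa^n\,d(x,y)$ for all $n\ge 0$. Writing a general $t\ge 0$ as $t=nT+r$ with $0\le r<T$ and interpolating through $\sup_{0\le r\le T}\lV D\phi_r\rV$ in a neighbourhood of $\La$ then produces $d(\phi_t(x),\phi_t(y))\le C\,\ee^{-\la t}\,d(x,y)$ with $\la:=-(\log\kappa)/T>0$.

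The main obstacle is uniformity: the cone width, the contraction factor $\kappa$, and the intrinsic-versus-extrinsic length comparison must be controllable simultaneously for every base point $x\in\La$, so that the iteration closes with a single pair $(\e,\kappa)$. This uniform control rests on compactness of $\La$, the $C^1$-continuous dependence of $W^{ss}_\e(x)$ on $x$ built into the graph-transform proof of the stable manifold theorem, and the inclusion $\phi_s(W^{ss}_\e(x))\subset W^{ss}_\e(\phi_s(x))$ for $s\ge 0$, which is immediate from the definitions.
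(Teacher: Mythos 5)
The paper does not give a proof of this proposition; it is stated only as a citation to Hirsch--Pugh--Shub and Bowen, so there is no argument in the paper to compare against. Your proof is correct and follows the expected standard route: cone control of $T_y W^{ss}_\e(x)$ near a continuous extension of $E^s$, together with hypothesis~(d), makes $D\phi_T$ a uniform strict contraction on those tangent planes for $\e$ small; the intrinsic-versus-extrinsic length comparison on the $C^1$-small discs transfers this to a contraction of $d(\phi_T(x),\phi_T(y))$, iterated at time-$T$ steps using $\phi_T(W^{ss}_\e(x))\subset W^{ss}_\e(\phi_T(x))$ and interpolated via $\sup_{0\le r\le T}\lV D\phi_r\rV$; the uniformity issues you single out (cone width, contraction factor, length distortion) are exactly the ones that need care and are handled correctly by compactness of $\La$ and the $C^1$-continuous dependence of $W^{ss}_\e(x)$ on $x$ given by the stable manifold theorem.
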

  
  \medskip
  \begin{mysec}{\bf Canonical Coordinates \cite[(3.1)]{PS}, \cite[(4.1)]{HPS}, \cite[(7.4)]{Smale},
  \cite[(1.4)]{Bowen6}, \cite[(1.2)]{Bowen3}:}
  \label{caco}
  \end{mysec}\vskip -5pt
  {\it There are $\de, \,\ga>0$ for which the following is true:
  If $x, y\in \La$ and $d(x,y)\le \de$ then there is a unique 
  $v=v(x,y)\in \re$ with $|v|\le \ga$ such that 
  \begin{equation}\label{ecaco}
  \langle x,y\rangle:=W_\ga^{ss}(\phi_v(x))\cap W^{uu}_\ga(y) \ne \emptyset.
  \end{equation}
  This set consists of a single point, which we denote 
  $\langle x,y\rangle\in M$. The maps $v$ and 
  $\langle\;,\;\rangle$ are continuous on the set
  $\{\,(x,y)\;|\; d(x,y)\le \de\,\}\subset \La\times\La$.
  }
  
  \medskip
  
     We will take a small neighborhoods $\La\subset U\subset M$ and  $\cU\subset\fX^1(M)$
     and  take uniform constants from \ref{pHPS} and \ref{caco} which hold for every $Y\in\cU$ and
     all points in the maximal invariant set $\La^Y_U:=\bigcap_{t\in\re}\psi^Y_t(\ov U)$.
    The following proposition is a modification of 
    Bowen {\cite[Prop.~1.6, p.~4]{Bowen6}} which we prove below.
  
    \begin{Proposition}
    \label{B16}
  \quad
  
     There are open sets $X\in\cU\subset\fX^1(M)$
     and $\La\subset U\subset M$ and $\eta_0, \ga>0$, $B>1$ such that
     $$
     \forall \eta>0 \qquad \exists
      \be=\be(\eta)=\tfrac 1B\,\min\{\eta,\eta_0\} \qquad \forall Y\in\cU
     $$
   if $\psi_t=\psi^Y_t$  is the flow of $Y$, 
  $x,\, y \in \Om^Y_U:=\bigcap_{t\in\re}\psi_t(\ov U)$ and $s:\re\to\re$  continuous with $s(0)=0$ satisfy
  \begin{equation}\label{csh}
  d(\psi_{t+s(t)}(y),\psi_t(x))\le\be\quad\text{ for }|t|\le L,
  \end{equation}
  then 
  \begin{equation}\label{setav}
  |s(t)|\le3\eta \quad \text{ for all }|t|\le L,
  \qquad
   |v(x,y)|\le\eta \quad \text{ and }
  \end{equation}
    \begin{align}
  \forall |s|\le L, \qquad
  &d(\psi_s(y),\psi_{s+v}(x))\le C\,\ee^{-\la(L-|s|)}\,
  \big[d(\psi_L(w),\psi_L(y))+d(\psi_{-L}(w),\psi_{-L+v}(x))\big],
  \notag\\
  &\text{where }\qquad w:=\langle x,y\rangle 
  =W^{ss}_\ga(\psi_v(x))\cap W^{uu}_\ga(y).
  \label{d<egdd1}
  \end{align}
  also
  \begin{equation}\label{dsh}
  \forall |s|\le L, \qquad
   d(\psi_s(y),\psi_s\psi_v(x))\le C\, \ga\,e^{-\la (L-|s|)}.
  \end{equation}
   In particular 
  $$
  d(y,\psi_v(x))\le C\, \ga\, e^{-\la L}.
  $$
  \end{Proposition}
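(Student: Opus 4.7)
The plan is to follow Bowen's proof of \cite[Prop.~1.6]{Bowen6}, keeping every constant uniform across the $C^1$ neighbourhood $\cU$. First I would shrink $\cU$ and $U$ so that Proposition~\ref{unifhip} provides uniform hyperbolicity constants $C_0,\lambda_0,T_0$ for every $Y\in\cU$ on the maximal invariant set $\La^Y_U$, and so that the canonical coordinates in \ref{caco} and Proposition~\ref{pHPS} furnish uniform constants $\delta_*,\gamma_*,C,\lambda>0$ governing the construction and contraction of $W^{ss}_{\gamma_*}$ and $W^{uu}_{\gamma_*}$ for every $Y\in\cU$. Pick $\eta_0<\tfrac14\min(\delta_*,\gamma_*)$, and shrink $\cU$ once more so that $|Y|$ is bounded below by a uniform $c>0$ on $\ov U$, which is possible because $\La$ contains no fixed points.

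Given $\eta\le\eta_0$, I would choose $\beta(\eta)=B^{-1}\min(\eta,\eta_0)$ with $B>1$ large enough to meet three uniform smallness requirements. (a)~A flow-box argument using the lower bound $|Y|\ge c$: if the orbits stay $\beta$-close as in \eqref{csh}, the flow-direction component of the separation, which is essentially $c\,|s(t)|$, is at most $\beta$, and the continuity of $s$ together with $s(0)=0$ forces $|s(t)|\le 3\eta$. (b)~$\beta<\delta_*$ lets \ref{caco} apply to $(x,y)$, yielding $w=\langle x,y\rangle\in W^{ss}_{\gamma_*}(\psi_v(x))\cap W^{uu}_{\gamma_*}(y)$; the uniform continuity of $v(\cdot,\cdot)$ together with $v(x,x)=0$ then forces $|v|\le\eta$. (c)~$\beta$ small enough that all auxiliary points below remain in the domain of Proposition~\ref{pHPS}. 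This establishes \eqref{setav}.

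With such a $w$ in hand, by invariance of the stable and unstable manifolds $\psi_{-L}(w)\in W^{ss}(\psi_{-L+v}(x))$ and $\psi_L(w)\in W^{uu}(\psi_L(y))$. For $s\in[-L,L]$ the triangle inequality gives
\[
d(\psi_s(y),\psi_{s+v}(x))\le d(\psi_s(y),\psi_s(w))+d(\psi_s(w),\psi_{s+v}(x)).
\]
Applying Proposition~\ref{pHPS}(a) to the pair $\psi_{-L}(w),\psi_{-L+v}(x)$ evolved forward by $L+s$ bounds the second summand by $C\,e^{-\lambda(L+s)}\,d(\psi_{-L}(w),\psi_{-L+v}(x))$; applying Proposition~\ref{pHPS}(b) to $\psi_L(w),\psi_L(y)$ evolved backward by $L-s$ bounds the first summand by $C\,e^{-\lambda(L-s)}\,d(\psi_L(w),\psi_L(y))$. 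Since $\min(L+s,L-s)=L-|s|$, summing the two estimates yields \eqref{d<egdd1}, and specialising via the trivial bounds $d(\psi_{\pm L}(w),\cdot)\le\gamma_*$ yields \eqref{dsh} together with the final bound $d(y,\psi_v(x))\le C\gamma_*\,e^{-\lambda L}$ at $s=0$.

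The main obstacle is the uniformity in $Y\in\cU$. The uniform size $\gamma_*$ of the local stable and unstable manifolds and the uniform Lipschitz dependence of $W^{ss}_{\gamma_*},W^{uu}_{\gamma_*}$ on $(x,Y)$ rest on the stability of hyperbolic splittings under $C^1$ perturbations, which follows from the cone-field characterisation together with the graph-transform construction and is the quantitative content of Proposition~\ref{unifhip}. Uniform transversality of $E^s\oplus E$ to $E^u$ then yields uniform canonical-coordinate constants $\delta_*$. Everything else reduces to bookkeeping with the triangle inequality and the above endpoint contractions.
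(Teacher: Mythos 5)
Your proposal correctly identifies the framework (uniform constants via Proposition~\ref{unifhip}, canonical coordinates, Proposition~\ref{pHPS}), but two of the key steps as described would not go through, and together they constitute the real content of the proof.

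First, the ``flow-box argument'' for $|s(t)|\le 3\eta$ is not sound. The hypothesis $d(\psi_{t+s(t)}(y),\psi_t(x))\le\be$ involves two \emph{distinct} points $x,y$, so the separation is not ``essentially $c\,|s(t)|$'' in the flow direction --- the vector from $\psi_t(x)$ to $\psi_{t+s(t)}(y)$ has no direct relation to $s(t)$. What such a condition gives you, by comparing nearby times, is at best a bound on the \emph{increments} $|s(t+h)-s(t)|$; over a long window $[-L,L]$ the reparametrization $s$ could drift without violating the closeness hypothesis. A uniform-in-$t$ bound on $|s(t)|$ cannot be had by a transversality-to-the-flow argument alone; in the paper it is proved \emph{simultaneously} with the stable/unstable estimates, via a connectedness (bootstrap) argument: one defines
$$A:=\Big\{t\in[0,L]\;:\;|s(t)|\ge 3\eta\ \text{or}\ d(\psi_t(y),\psi_t(w))\ge\tfrac12\ga\Big\},\quad
B:=\Big\{t\in[0,L]\;:\;|s(-t)|\ge 3\eta\ \text{or}\ d(\psi_{-t+v}(x),\psi_{-t}(w))\ge\tfrac12\ga\Big\},$$
and shows each is empty by taking $t_1=\inf A$, using the canonical coordinates at the re-centred pair $(\psi_{t_1}(x),\psi_{t_1+s(t_1)}(y))$, and applying the analogue of Lemma~\ref{B4} there to get a contradiction.

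Second, and related: you invoke Proposition~\ref{pHPS}(a),(b) for the pairs $(\psi_{-L}(w),\psi_{-L+v}(x))$ and $(\psi_L(w),\psi_L(y))$, but that proposition requires membership in the \emph{local} sets $W^{ss}_\e$, $W^{uu}_\e$ for a small $\e$. Invariance of the global sets $W^{ss},W^{uu}$ does not put $\psi_{-L}(w)$ into $W^{ss}_\e(\psi_{-L+v}(x))$: by definition $w\in W^{ss}_\ga(\psi_v(x))$ only controls $d(\psi_r(w),\psi_{r+v}(x))$ for $r\ge 0$, whereas $\psi_{-L}(w)\in W^{ss}_\e(\psi_{-L+v}(x))$ requires that control for all $r\ge -L$, i.e.\ also on $[-L,0]$. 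This backward propagation of $\e$-closeness (resp.\ forward, for the unstable side) is precisely what the emptiness of $B$ (resp.\ $A$) yields; without it, the application of Proposition~\ref{pHPS} at time $\pm L$ is unjustified. The rest --- the triangle inequality, summing the two estimates, the final bounds --- is fine once these memberships are established. So you have the right skeleton but the central step, the bootstrap that controls $s(t)$ and pushes the stable/unstable closeness out to $\pm L$, is missing.
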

  
  \bigskip
  
  For the proof of Proposition~\ref{B16} we need the following
  
    \begin{Lemma}\label{B4}

  There is $\eta_0>0$ and $B>1$ such that  
  \newline
  if $d(x,y)\le\eta_0$,  
  $Y\in\cU$, $x,\,y\in\La^Y_U$ and $\eta = B\, d(x,y)$  then
  \begin{gather}
  \langle x,y\rangle\in W^{ss}_\eta(\psi^Y_v(x))\cap W^{uu}_\eta(y)
  \qquad \text{with }\quad
  |v(x,y)| \le \eta \qquad 
  \label{vdxy}\\
  \text{and } \qquad d(x,\psi^Y_v(x))\le \eta.
  \label{Bdxy}
  \end{gather}
  \end{Lemma}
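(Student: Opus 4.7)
The plan is to deduce the lemma from the local product structure at hyperbolic points together with the uniform hyperbolicity provided by Proposition~\ref{unifhip}. By that proposition, after possibly shrinking $\cU$ and $U$, the splitting $T_{\La^Y_U}M = E^s\oplus E\oplus E^u$ is well defined with uniform constants on~\eqref{hipb}--\eqref{hipd} for every $Y\in\cU$. A standard consequence of the stable manifold theorem (see e.g.\ \cite{HPS}) is that the local strong stable disk $W^{ss}_\ga(z)$, the local strong unstable disk $W^{uu}_\ga(z)$ and the orbit arc $\{\psi^Y_s(z):|s|\le\ga\}$ depend continuously (in the $C^1$ sense) on $z\in\La^Y_U$ and on $Y\in\cU$, and are $C^1$ graphs over their tangent spaces $E^s(z)$, $E^u(z)$ and $\re\cdot Y(z)$ with uniform $C^1$ bounds. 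The angles between these three subspaces are bounded below uniformly in $z$ and $Y$ by Proposition~\ref{unifhip}.

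From this I would first establish a Lipschitz form of the canonical coordinate map \eqref{ecaco}. Concretely, for $z\in\La^Y_U$, the map
\[
\Psi_z : W^{ss}_\ga(z)\times\{\psi^Y_s(z):|s|\le\ga\}\times W^{uu}_\ga(z) \longrightarrow M,
\qquad (p,\psi^Y_s(z),q) \longmapsto \text{local intersection}
\]
is a bi-Lipschitz chart onto a neighbourhood of $z$, with Lipschitz constants that are uniform in $z\in\La^Y_U$ and $Y\in\cU$ (this uniformity is the key use of compactness of $\ov U$, the uniform angle bound, and the $C^1$ continuous dependence above). Call the common Lipschitz constant of $\Psi_z$ and $\Psi_z^{-1}$ the number $B_0$.

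Next I would apply $\Psi_x^{-1}$ to $y$: for $d(x,y)\le\eta_0$ small enough that $y$ lies in the domain of $\Psi_x^{-1}$, write $\Psi_x^{-1}(y)=(p^s,\psi^Y_{v}(x),p^u)$. Then by the Lipschitz bound on $\Psi_x^{-1}$,
\[
|v|\le B_0\, d(x,y), \qquad d(p^s,x)\le B_0\, d(x,y), \qquad d(p^u,x)\le B_0\, d(x,y).
\]
Unwinding the definition of $\langle x,y\rangle$ in~\eqref{ecaco}, the point $\langle x,y\rangle$ is precisely the intersection $W^{ss}_\ga(\psi^Y_v(x))\cap W^{uu}_\ga(y)$, so using the Lipschitz bound on $\Psi_{\psi^Y_v(x)}$ (or on holonomies along the strong stable/unstable foliations) one obtains $d(\langle x,y\rangle,\psi^Y_v(x))\le B_1\,d(x,y)$ and $d(\langle x,y\rangle,y)\le B_1\, d(x,y)$ for a uniform constant $B_1$. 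Finally, using continuity of the flow, $d(x,\psi^Y_v(x))\le B_2|v|\le B_0 B_2\, d(x,y)$. Taking $B:=\max\{B_0,B_1,B_2 B_0\}$ and $\eta=B\, d(x,y)$ yields \eqref{vdxy} and \eqref{Bdxy}.

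The main obstacle is the uniformity of the Lipschitz constant $B_0$ across $Y\in\cU$ and across base points in $\La^Y_U$; this needs the uniform transversality of $E^s\oplus E\oplus E^u$ from Proposition~\ref{unifhip}, together with the $C^1$ continuous dependence of local invariant manifolds on both the base point and the vector field. Once uniformity is secured the rest is elementary linear-algebraic bookkeeping on the chart $\Psi_x$, so no genuinely new technical input is required beyond what is packaged into Proposition~\ref{unifhip} and the Hirsch--Pugh--Shub/Bowen estimates of Proposition~\ref{pHPS}.
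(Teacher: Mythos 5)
Your approach is essentially the paper's: use the uniform hyperbolicity and angle bounds from Proposition~\ref{unifhip} to make the canonical coordinate map $\langle\cdot,\cdot\rangle$ uniformly bi-Lipschitz on $\La^Y_U$ for all $Y\in\cU$, then read off the estimates. The paper sets this up via uniformly invariant cone families rather than literally constructing a chart $\Psi_z$, but that is a presentational difference: the substance (uniform transversality of $E^s$, $E^u$ and the flow direction, plus continuous dependence of local invariant manifolds on base point and on $Y$) is the same.

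However, your write-up leaves one step genuinely unfinished. You produce only the time-zero distance bounds
$d(\langle x,y\rangle,\psi^Y_v(x))\le B_1\,d(x,y)$ and $d(\langle x,y\rangle,y)\le B_1\,d(x,y)$, but the conclusion of the lemma is that $\langle x,y\rangle\in W^{ss}_\eta(\psi^Y_v(x))\cap W^{uu}_\eta(y)$, and in the paper's definition~\eqref{wuue} these sets require the distance bound to hold along the \emph{entire} forward (resp.\ backward) orbit, i.e.\ $d(\psi_t(\langle x,y\rangle),\psi_{t+v}(x))\le\eta$ for all $t\ge 0$ and $d(\psi_{-t}(\langle x,y\rangle),\psi_{-t}(y))\le\eta$ for all $t\ge 0$. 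That propagation is exactly where Proposition~\ref{pHPS} is needed: since $\langle x,y\rangle\in W^{ss}_\ga(\psi^Y_v(x))$ by the definition~\eqref{ecaco}, Proposition~\ref{pHPS} gives $d(\psi_t(\langle x,y\rangle),\psi_{t+v}(x))\le C\,\ee^{-\la t}\,d(\langle x,y\rangle,\psi^Y_v(x))\le C B_1\, d(x,y)$ for all $t\ge 0$, and similarly on $W^{uu}$; only then can you take $B\ge C B_1$ and conclude membership in $W^{ss}_\eta$ and $W^{uu}_\eta$ with $\eta = B\,d(x,y)$. You name Proposition~\ref{pHPS} in your closing remark but never actually invoke it where it is needed, so as written the argument does not establish the claimed $W^{ss}_\eta\cap W^{uu}_\eta$ membership. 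Inserting this one step makes your proof match the paper's.
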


 \begin{proof}\quad

We have that $\langle x,x\rangle =x$ and $v(x,x)=0$.
By uniform continuity, given $\de>0$, for $d(x,y)$ small enough
\begin{equation}\label{dewx}
d(\langle x,y\rangle,x)\le \de,
\qquad
d(\langle x, y\rangle,y)\le \de,
\end{equation}
and $v=v(x,y)$ is so small that
\begin{equation}\label{dpsvd}
d(\psi_v(x),x)\le \de.
\end{equation}

 The continuity of the hyperbolic splitting implies that the angles
  $\measuredangle(E^s,E^u)$, $\measuredangle(Y,E^s)$ and 
  $\measuredangle(E^s\oplus\re Y,E^u)$
   are bounded away from zero, 
  uniformly on $\La^Y_V$, for some $V\supset U$
   and all $Y$ in an open set $\cU_0\subset\fX^1(M)$
  with $X\in\cU_0$.   
    There is $\be_1>0$ such that if $x,\,y\in\La^Y_U$ and $d(x,y)<\be_1$
  then 
  $$
  \langle x, y\rangle =W^s_\ga(x)\cap W^{uu}_\ga(y)\in V.
  $$

   The strong local invariant manifolds $W^{ss}_\ga$, $W^{uu}_\ga$ 
  are tangent to $E^s$, $E^u$ at $\La^Y_V$ and for a fixed $\ga$ as $C^1$ submanifolds
  they vary continuously on the base point $x\in M$ and on the vector field in $C^1$
  topology (cf.  \cite[Thm. 4.3]{CroPo}\cite[Thm. 4.1]{HPS}). 
  There is a family of small cones $E^u_X(x)\subset C^u(x)\subset T_xM$, $E^s_X(x)\subset C^s(x)\subset T_xM$ 
  defined on a neighbourhood $W$ of $\La$ such that $\exp_x(C^u(x)\cap B_\de(0))$, 
  $\exp_x (C^s(x)\cap B_\de(0))$ are invariant under $\psi^Y_1$ and $\psi^Y_1$ respectively, for
  $Y$ in a $C^1$ neighborhood $\cW$ of $X$. These cones contain $W^{uu}_{\ga}(x)$ and $W^{ss}_{\ga}(x)$ for 
  $x\in\La^Y_W$ and $Y\in\cW$. The angles between these cones are uniformly bounded 
  away form zero, so for example if $z^u\in W^{uu}(x)$, $z^s\in W^{ss}(x)$ and $d(z^u,x)$, $d(z^s,x)$ are small,
  then $d(z^u,x)+d(z^s,x)< A_0\,d(z^u, z^s)$ for some $A_0>0$.
  We can construct similiar cones separating $E^u$ from $E^s\oplus\re X$.
  
  Shrinking $U$ and $\cU$ if necessary there are $0<\be_2<\be_1$  and $A_1,\,A_2,\,A_3>0$
  such that if $Y\in\cU$, $x,\,y\in \La^Y_U$ and $d(x,y)<\be_2$,
  taking 
  $w:=\langle x,y\rangle\in W^s_\ga(x)\cap W^{uu}_\ga(y)$
  and $v$  such  that $w\in W^{ss}_\ga(\psi^Y_v(x))$,  i.e. 
$\psi_v^Y(x)\in\psi^Y_{[-1,1]}(x)\cap W^{ss}_\ga(w)$, then
  \begin{align}
d(x,w)+d(w,y)&\le A_1\, d(x,y),
 \label{a1xwy}
\\ 
d(x,\psi^Y_v(x))+d(\psi^Y_v(x),w)
&\le A_2\, d(x,w) \le A_2 A_1 \,d(x,y),
\label{a2xpw}
\\
|v|\le A_3\, d(x,\psi^Y_v(x))&\le A_3 A_2 A_1 \, d(x,y).
\notag
  \end{align}

  We can assume that $\cU_0$ and $U$ are so small that the constants
  $C$, $\la$, $\e$ in Proposition~\ref{pHPS} can be taken uniform for all 
  $Y\in\cU_0$ and in $\La^Y_U$.
    By Proposition~\ref{pHPS},
since $w=\langle x,y\rangle \in W^{ss}_\ga(\psi^Y_v(x))$, 
 we have that
\begin{align*}
\forall t\ge 0 \qquad
d\big(\psi^Y_t(\langle x,y\rangle),\psi^Y_{t}(\psi^Y_v(x))\big)
&\le C\,\ee^{-\la t}\,d(w,\psi_v^Y(x)) 
\\
&\le A_2 A_1 C\, \ee^{-\la t} \,d(x,y)
\qquad\text{ using~\eqref{a2xpw}.} 
\end{align*} 
Take $B_1:=(1+A_2)A_1 C$. Then if $d(x,y)<\be_2$ and 
$\eta= B_1\,d(x,y)$
 we obtain that
$\langle x,y\rangle\in W^{ss}_\eta(\psi^Y_v(x))$.

Since $\langle x,y\rangle\in W^{uu}_\ga(y)$ we have that 
\begin{align*}
\forall t\ge 0 \qquad
d(\psi^Y_{-t}(\langle x,y\rangle),\psi^Y_{-t}(y))
&\le C\, \ee^{-\la t} \,d(w,y) 
\\
&\le A_1C\, \ee^{-\la t} d(x,y)
\qquad\text{using }\eqref{a1xwy}.
\end{align*}
Thus if $\eta = B_1\, d(x,y)$ then $\langle x, y\rangle \in W^{uu}_\eta(y)$.

By~\eqref{dewx} and~\eqref{dpsvd}
there is $0<\be_0<\be_2$ such that if $d(x,y)<\be_0$ then
$d(w,x)$, $d(w,y)$ and $d(\psi_v(x),x)$ are small enough
to satisfy the above inequalities.
Now let 
$$
B:=\max\{ 1,\, B_1,\, A_3 A_2 A_1,\, A_2 A_1\}.
$$

 \end{proof}

  \noindent{\bf Proof of Proposition~\ref{B16}:}
  
  Let $\ga$ be 
  from~\ref{caco}.
  We may assume that $\eta$ is so small that
  \begin{gather}
  \eta < \tfrac{\ga}8,
  \label{etaga8}
  \\
  \sup\{\,d(\psi_u(x),x)\;:\; x\in M,\,|u|\le 4\eta\,\}\le \tfrac \ga 8.
  \label{u4eta}
  \end{gather}
  Let 
  \begin{equation}\label{beeta}
  \be=\be(\eta) := \tfrac 1B\, \min\{\eta,\eta_0\},
  \end{equation}
  where $B>1$ and $\eta_0$  are
  from Lemma~\ref{B4}. Consider $x$, $y$ and $s(t)$ as
  in the hypothesis. Since $s(0)=0$ we have that  $d(x,y)\le \be$.
  Using Lemma~\ref{B4} we can define
   \begin{equation}\label{wxyeta}
   w:=\langle x,y\rangle =W^{ss}_\eta(\psi_v(x))\cap W^{uu}_\eta(y)\ne\emptyset,
   \end{equation}
   we also have 
   \begin{equation}\label{vxyeta}
   |v|=|v(x,y)|\le \eta.
    \end{equation}
       Define the sets
   \begin{alignat*}{2}
   A&:=\{\,t\in[0,L]\;:\;|s(t)|\ge 3\eta\;&&\text{ or }
   \;d(\psi_t(y),\psi_t(w))\ge \tfrac 12{\ga}\,\},
   \\
   B&:=\{\,t\in[0,L]\;:\; |s(-t)|\ge 3\eta\;&&\text{ or }
    \;d(\psi_{-t+v}(x),\psi_{-t}(w))\ge \tfrac12\ga\,\}.
   \end{alignat*}
   
   Suppose that $A\ne \emptyset$. Let $t_1:=\inf A$. 
   Then 
   $d(\psi_t(y),\psi_t(w))\le \tfrac 12 \,\ga$, $\forall t\in[0,t_1]$.
   Since $w\in W^{uu}_\eta(y)$ and by~\eqref{etaga8}, $\eta<\tfrac 1{8}\ga$;
   from~\eqref{wuue} 
   we have that $d(\psi_t(y),\psi_t(w))\le \tfrac 18 \ga$, $\forall t\le 0$.
   Therefore
   \begin{equation}\label{wuut1}
   d(\psi_{t_1-r}(y),\psi_{t_1-r}(w))\le \tfrac 12 \,\ga, \qquad \forall r\ge 0.
   \end{equation}

   Since $s$ is continuous, $s(0)=0$ and $t_1\in\partial A$, we have that 
   $|s(t_1)|\le 3\eta$.
   Using~\eqref{u4eta} twice with $u=|s(t_1)|$, \eqref{wuut1} and the triangle inequality
   we obtain
   $$
   d(\psi_{t_1+s(t_1)-r}(y),\psi_{t_1+s(t_1)-r}(w))\le  \tfrac 34 \ga,
   \qquad \forall r\ge 0.
   $$
   Hence $\psi_{t_1+s(t_1)}(w)\in W^{uu}_\ga(\psi_{t_1+s(t_1)}(y))$. 
   From~\eqref{wxyeta},
   $w\in W^{ss}_\eta(\psi_v(x))$, and then using~\eqref{etaga8},
   \begin{equation}\label{uvga8}
   d(\psi_r(w),\psi_{r+v}(x))\le \eta<\tfrac \ga 8,
   \qquad \forall r\ge0.
   \end{equation}
   Since $|s(t_1)|\le 3\eta$, using~\eqref{u4eta} twice with $u=s(t_1)$, 
   and~\eqref{uvga8} with $r=t_1+p\ge 0$, and the triangle inequality,
   we get
   $$
   d(\psi_{t_1+s(t_1)+p}(w),\psi_{t_1+s(t_1)+v+p}(x))\
   \le\tfrac {3\ga}8, \qquad \forall p\ge 0.
   $$
   Hence $\psi_{t_1+s(t_1)}(w)\in W^{ss}_\ga(\psi_{s(t_1)+v}(\psi_{t_1}(x)))$.
   We have shown that
   \begin{equation}\label{pstst}
   \psi_{t_1+s(t_1)}(w)\in W^{ss}_\ga(\psi_{s(t_1)+v}(\psi_{t_1}(x)))
   \cap W^{uu}_\ga(\psi_{t_1+s(t_1)}(y)).
   \end{equation}
   Since $|s(t_1)+v|\le|s(t_1)|+|v|\le 4\eta<\ga$
   and by~\eqref{csh}, 
   \begin{equation}\label{lbe}
   d(\psi_{t_1+s(t_1)}(y),\psi_{t_1}(x))\le\be,
   \end{equation}
   equation~\eqref{pstst} implies that 
   \begin{gather*}
   v(\psi_{t_1}(x),\psi_{t_1+s(t_1)}(y))=s(t_1)+v(x,y),
   \\
   \psi_{t_1+s(t_1)}(w)=\langle \psi_{t_1}(x),\psi_{t_1+s(t_1)}(y)\rangle.
   \end{gather*}
   By Lemma~\ref{B4}, \eqref{lbe} and \eqref{beeta}, 
   \begin{gather}
   |s(t_1)+v|\le\eta \qquad \text{ and }
   \label{st1v}
   \\
   \psi_{t_1+s(t_1)}(w)\in W^{uu}_\eta(\psi_{t_1+s(t_1)}(y)),
   \;\text{in particular}
   \notag
   \\
   d(\psi_{t_1+s(t_1)}(w),\psi_{t_1+s(t_1)}(y))\le\eta.
   \label{dpstst}
   \end{gather}
   Since $|s(t_1)|\le 3\eta$, from~\eqref{u4eta},~\eqref{dpstst}
   and~\eqref{etaga8}, we get that
   $$
   d(\psi_{t_1}(w),\psi_{t_1}(y))\le \eta+2\left(\tfrac\ga 8\right)\le 
   \tfrac {3\ga}8.
   $$
   From~\eqref{st1v} and~\eqref{vxyeta} we have that
   $$
   |s(t_1)|\le|s(t_1)+v|+|v|\le 2\eta.
   $$
   These statements contradict $t_1\in A$.
   Hence $A=\emptyset$.
   
   Similarly one shows that $B=\emptyset$.
   Since $A=\emptyset$, inequality~\eqref{ywtga} holds for all $t\in[0,L]$.
   From~\eqref{wxyeta}, $w\in W^{uu}_\eta(y)$ and by~\eqref{etaga8}, 
   $\eta<\tfrac\ga 8$; thus inequality~\eqref{ywtga} also holds for $t\le 0$.
   \begin{equation}\label{ywtga}
   \forall t\le L\qquad d(\psi_t(y),\psi_t(w))< \tfrac 12{\ga}.
   \end{equation}
   Therefore
   \begin{equation}\label{psiLWuu}
    \psi_L(w)\in W^{uu}_{\frac 12\ga}(\psi_L(y)).
   \end{equation} 
   From Proposition~\ref{pHPS} we get
  \begin{equation*}
  \forall |s|\le L \qquad 
  d(\psi_s(w),\psi_s(y))\le C\,\ee^{-\la(L-|s|)}
  \,d(\psi_L(w),\psi_L(y)).
  \end{equation*}
  Similarly, $B=\emptyset$ imples that 
  \begin{equation}\label{psi-Lwss}
  \psi_{-L}(w)\in W^{ss}_{\frac 12\ga}(\psi_{-L+v}(x))
  \qquad \text{ and }
  \end{equation}
  $$
  \forall |s|\le L
  \qquad
  d(\psi_s(w),\psi_{s+v}(x))\le C\,\ee^{-\la(L-|s|)}\,
  d(\psi_{-L}(w),\psi_{-L+v}(x)).
  $$
   Adding these inequalities we obtain
  \begin{align}
  \forall |s|\le L \qquad
  &d(\psi_s(y),\psi_{s+v}(x))\le C\,\ee^{-\la(L-|s|)}\,
  \big[d(\psi_L(w),\psi_L(y))+d(\psi_{-L}(w),\psi_{-L+v}(x))\big],
  \notag\\
  &\text{where }\qquad w:=\langle x,y\rangle 
  =W^{ss}_\ga(\psi_v(x))\cap W^{uu}_\ga(y).
  \label{d<egdd}
  \end{align}
  This proves inequality~\eqref{d<egdd1}.
  
    From~\eqref{vxyeta}, $|v(x,y)|\le\eta$.
  The fact $A\cup B=\emptyset$ also gives $|s(t)|\le 3\eta$
  for $t\in[-L,L]$. This proves~\eqref{setav}.
  From~\eqref{psiLWuu}, \eqref{psi-Lwss} and~\eqref{d<egdd}
  we get inequality~\eqref{dsh}.
  
  \qed

   \begin{Proposition}\label{B5}\quad
   
   Let $\be(\eta)$ be from Proposition~\ref{B16}.
   \begin{enumerate}[(a)]
   \item\label{B5a}
   If $x,\,y\in\La$ and $s:[0,+\infty[\to\re$ continuous
   with $s(0)=0$
   satisfy 
   $$
   d(\psi_{t+s(t)}(y),\psi_t(x))\le \be \qquad \forall t\ge0,
   $$
   then $|s(t)|\le 3\eta$ for all $t\ge 0$ and there is $|v(x,y)|\le\eta$
   such that $y\in W^{ss}_\ga(\psi_v(x))$.
   
   \item\label{B5b}
   Similarly, if $x,\,y\in\La$, $s:]-\!\infty,0]\to\re$ is continuous with $s(0)=0$
   and
   $$
   d(\psi_{t+s(t)}(y),\psi_t(x))\le\be \qquad \forall t\le 0,
   $$
    then $|s(t)|\le 3 \eta$ for all $t\le 0$ and there is $|v(x,y)|\le\eta$
   such that $y\in W^{uu}_\ga(\psi_v(x))$.
   \end{enumerate}
   \end{Proposition}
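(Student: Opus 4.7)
The plan is to deduce both statements directly from the proof of Proposition~\ref{B16}, exploiting the following dichotomy visible in that proof: the argument showing $A=\emptyset$ invokes only the \emph{forward} shadowing hypothesis (together with the structural properties of $w=\langle x,y\rangle$), while the argument showing $B=\emptyset$ invokes only the \emph{backward} one.

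For part (a), I fix $\eta\in(0,\ga/8)$ and let $\be=\be(\eta)$ be as in Proposition~\ref{B16}. Since $s(0)=0$, the hypothesis forces $d(x,y)\le\be\le\eta_0/B$, so Lemma~\ref{B4} produces $v=v(x,y)$ with $|v|\le\eta$ and
$$
w=\langle x,y\rangle\in W^{ss}_{\eta}(\psi_v(x))\cap W^{uu}_{\eta}(y).
$$
For each $L>0$ I form
$$
A_L:=\{\,t\in[0,L]\;:\;|s(t)|\ge 3\eta\text{ or }d(\psi_t(y),\psi_t(w))\ge \tfrac{\ga}{2}\,\}
$$
and rerun the proof that $A=\emptyset$ in Proposition~\ref{B16}: inspection shows that every invocation of the shadowing hypothesis happens at a time $t_1\in[0,L]\subset[0,\infty)$ (as in~\eqref{lbe}), so the argument goes through under our one-sided hypothesis. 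Hence $A_L=\emptyset$ for every $L>0$, and letting $L\to\infty$ yields $|s(t)|\le 3\eta$ and $d(\psi_t(y),\psi_t(w))<\ga/2$ for all $t\ge 0$.

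To upgrade this to the desired conclusion I would show $y=w$, which gives $y\in W^{ss}_{\eta}(\psi_v(x))\subset W^{ss}_{\ga}(\psi_v(x))$. This is where hyperbolicity of $\La$ from Proposition~\ref{unifhip} enters in its \emph{expansion} form on strong unstable leaves (the time-reversed counterpart of Proposition~\ref{pHPS}(b)): since $y\in W^{uu}_{\eta}(w)$, there are constants $C_1,\la_1>0$ with $d(\psi_t(y),\psi_t(w))\ge C_1^{-1}\ee^{\la_1 t}\,d(y,w)$ for $t\ge 0$, valid as long as this distance remains within a small hyperbolic chart. The uniform bound $\ga/2$ just obtained is compatible with such growth only if $d(y,w)=0$.

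Part (b) is symmetric. One reruns instead the $B=\emptyset$ portion of the proof of Proposition~\ref{B16}, which uses only the backward hypothesis, to obtain $|s(t)|\le 3\eta$ for $t\le 0$ and $d(\psi_{-t}(w),\psi_{-t+v}(x))<\ga/2$ for all $t\ge 0$. Combined with $w\in W^{ss}_{\eta}(\psi_v(x))$ and the backward expansion of strong stable leaves (the time-reversed analogue of Proposition~\ref{pHPS}(a)), this forces $w=\psi_v(x)$, whence $y\in W^{uu}_{\eta}(w)=W^{uu}_{\eta}(\psi_v(x))\subset W^{uu}_{\ga}(\psi_v(x))$.

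The main obstacle I anticipate is making those expansion estimates precise: Proposition~\ref{pHPS} is stated only as a contraction along $W^{ss}$ and $W^{uu}$, whereas the last step above needs the companion expansion estimates, valid as long as the compared orbits stay in the uniform hyperbolic neighbourhood of $\La$ guaranteed by Proposition~\ref{unifhip}. The a priori bound $\ga/2$ on all relevant distances keeps us inside that neighbourhood, so the estimates apply; but this reduction, and the passage from contraction to expansion by time reversal within the invariant set, should be written out cleanly.
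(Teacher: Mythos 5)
Your proposal is correct and follows essentially the same route as the paper: you rerun the $A=\emptyset$ (resp.\ $B=\emptyset$) portion of the proof of Proposition~\ref{B16} using only the one-sided hypothesis, obtaining $|s(t)|\le 3\eta$ and the forward bound $d(\psi_t(y),\psi_t(w))<\tfrac12\ga$. Where the paper tersely asserts $w\in W^{ss}_{\ga/2}(y)\cap W^{ss}_\eta(\psi_v(x))$ and closes with a triangle inequality, you instead make explicit the step that $A=\emptyset$ together with $w\in W^{uu}_\eta(y)$ forces $w=y$ (by applying Proposition~\ref{pHPS}(b) at forward times $\tau$ and letting $\tau\to\infty$), which is precisely the implicit forward-convergence argument the paper's conclusion $w\in W^{ss}_{\ga/2}(y)$ rests on; your presentation therefore fills in rather than departs from the paper's argument.
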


  \begin{proof}\quad
  
  We only prove item~\eqref{B5a}.
  The same proof as in Proposition~\ref{B16} shows that 
  taking
  $$
  w:=\langle x,y\rangle=W^{ss}_\eta(\psi_v(x))\cap W^{uu}_\eta(y)\ne \emptyset,
  $$
  we have that
  $|v|=|v(x,y)|\le\eta$ and 
  $$
   \emptyset=A:=\{\,t\in[0,+\infty[\;:\;|s(t)|\ge 3\eta\;\text{ or }
   \;d(\psi_t(y),\psi_t(w))\ge \tfrac 12 {\ga}\,\}.
  $$
  Therefore $|s(t)|\le 3\eta$ for all $t\ge 0$ and 
  $w\in W^{ss}_{\frac 12 \ga }(y)\cap W^{ss}_\eta(\psi_v(x))$.
  Since $\tfrac 12 \ga+\eta<\ga$ we get that
  $y\in W^{ss}_\ga(\psi_v(x))$.

  \end{proof}

  \medskip

   \begin{Proposition}\label{B71}\quad

         There are $D>0$, $\be_0>0$ and  open sets $X\in\cU\subset\fX^1(M)$,
     $\La\subset U\subset M$,  such that
     $$
     \forall \be\in]0,\be_0] \qquad \forall Y\in\cU,
     $$
   if $Y\in\cU$, $\psi_t=\psi^Y_t$  is the flow of $Y$, 
  $x,\, y \in \La^Y_U:=\bigcap_{t\in\re}\psi_t(\ov U)$ 
  and $s:\re\to\re$  continuous with $s(0)=0$ satisfy
  \begin{equation}\label{csh2}
  d(\psi_{t+s(t)}(y),\psi_t(x))\le\be\quad\text{ for }|t|\le L,
  \end{equation}
  then $|s(t)|\le D\be$ for all $|t|\le L$ and there is $|v|=|v(x,y)|\le D\be$ 
  such that
     \begin{align*}
  \forall |s|\le L, \qquad
  d(\psi_s(y),\psi_{s+v}(x))\le D\,\be\,\ee^{-\la(L-|s|)}.
  \end{align*}
  
  Moreover for all $|s|\le L$,
   \begin{equation}\label{dextxy}
  d(\psi_s(y),\psi_{s+v}(x))\le
   D\,\ee^{-\la(L-|s|)}\,
  \big[ d(\psi_L(y),\psi_{L+v}(x)) + d(\psi_{-L}(y),\psi_{-L+v}(x)) \big],
  \end{equation}
  and $v$ is determined by
  $$
  \langle x,y\rangle =W^{ss}_\ga(\psi_v(x))\cap W^{uu}_\ga(y)\ne \emptyset.
  $$

  \end{Proposition}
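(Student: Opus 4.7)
The plan is to upgrade Proposition~\ref{B16} by choosing $\eta$ proportional to $\be$, so that the bounds on $s$, $v$, and the shadowing distance all become linear in $\be$. Set $\be_0 := \eta_0/B$, where $B$ and $\eta_0$ are the constants of Proposition~\ref{B16}, and shrink the neighborhoods $X\in\cU\subset\fX^1(M)$ and $\La\subset U\subset M$ (using Proposition~\ref{unifhip}) so that all relevant constants in Proposition~\ref{pHPS} and in the canonical coordinates of~\ref{caco} are uniform for $Y\in\cU$ on $\La^Y_U$. Given $\be\in(0,\be_0]$ and the hypothesis~\eqref{csh2}, take $\eta := B\be\le\eta_0$; since then $\be=\be(\eta)$ in the sense of Proposition~\ref{B16}, that proposition yields $|s(t)|\le 3\eta = 3B\be$ for $|t|\le L$, $|v|=|v(x,y)|\le \eta = B\be$, the bracket point
$$w = \langle x,y\rangle = W^{ss}_\ga(\psi_v(x))\cap W^{uu}_\ga(y),$$
and the estimate~\eqref{d<egdd1}
$$d(\psi_s(y),\psi_{s+v}(x))\le C\,\ee^{-\la(L-|s|)}\bigl[d(\psi_L(w),\psi_L(y))+d(\psi_{-L}(w),\psi_{-L+v}(x))\bigr].$$

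Next I rewrite the right-hand side in terms of distances involving only $y$ and $\psi_{\pm L+v}(x)$, in order to obtain~\eqref{dextxy}. By invariance of the strong manifolds, $\psi_L(w)\in W^{ss}(\psi_{L+v}(x))\cap W^{uu}(\psi_L(y))$, and the intermediate bounds~\eqref{ywtga} and~\eqref{psi-Lwss} from the proof of Proposition~\ref{B16} place $\psi_L(w)$ inside the local strong manifolds $W^{ss}_\ga(\psi_{L+v}(x))$ and $W^{uu}_\ga(\psi_L(y))$. Uniqueness in the canonical coordinates of~\ref{caco} then identifies $\psi_L(w)=\langle \psi_{L+v}(x),\psi_L(y)\rangle$ with zero time shift. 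Applying the Lipschitz estimate~\eqref{a1xwy} of Lemma~\ref{B4} (uniformly on $\cU$) at the pair $(\psi_{L+v}(x),\psi_L(y))$, and symmetrically at $(\psi_{-L+v}(x),\psi_{-L}(y))$, produces a constant $K$ with
$$d(\psi_L(w),\psi_L(y))\le K\,d(\psi_L(y),\psi_{L+v}(x)),\qquad d(\psi_{-L}(w),\psi_{-L+v}(x))\le K\,d(\psi_{-L}(y),\psi_{-L+v}(x)).$$
Substituting into the displayed inequality gives~\eqref{dextxy} with $D=CK$.

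To finish with the bound $d(\psi_s(y),\psi_{s+v}(x))\le D\be\,\ee^{-\la(L-|s|)}$, I estimate each boundary distance in~\eqref{dextxy} using~\eqref{csh2} and the bounds $|v|,|s(\pm L)|\le 3B\be$. The triangle inequality gives
$$d(\psi_L(y),\psi_{L+v}(x))\le d(\psi_L(y),\psi_{L+s(L)}(y))+d(\psi_{L+s(L)}(y),\psi_L(x))+d(\psi_L(x),\psi_{L+v}(x)),$$
where the middle term is $\le\be$ by~\eqref{csh2}, while the outer two are bounded by $K_0|s(L)|\le 3K_0 B\be$ and $K_0|v|\le K_0 B\be$, for a uniform modulus of continuity $K_0$ of $\psi^Y_t$ near $\La^Y_U$ (valid on $\cU$ since the $Y$ are uniformly $C^1$-bounded). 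The identical estimate at $-L$ yields $d(\psi_{\pm L}(y),\psi_{\pm L+v}(x))=O(\be)$, and enlarging $D$ completes the proof.

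The main technical obstacle is the identification $\psi_L(w)=\langle \psi_{L+v}(x),\psi_L(y)\rangle$ together with the uniform control, over $Y\in\cU$, of the Lipschitz constant of the canonical bracket, of the hyperbolicity rates of Proposition~\ref{pHPS}, and of the local flow modulus $K_0$: these ingredients all depend a priori on the vector field, and their simultaneous uniformity is what allows a single $D$ and a single $\be_0$ to work for the entire family. This uniformity is delivered by Proposition~\ref{unifhip} and the $C^1$-robustness of the local product structure on a hyperbolic set.
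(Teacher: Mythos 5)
Your proposal follows essentially the same route as the paper's proof: apply Proposition~\ref{B16} with $\eta:=B\be$, push the bracket point $w=\langle x,y\rangle$ forward to $\psi_{\pm L}(w)$ using flow-invariance of the strong manifolds, convert the right-hand side of~\eqref{d<egdd1} into distances between $y$ and $\psi_{\cdot+v}(x)$ via a cone-based Lipschitz estimate (the paper's~\eqref{wpsiv}), and bound the endpoint distances by $O(\be)$ with a triangle inequality. The only point you gloss over is that $\be_0$ must be taken smaller than $\eta_0/B$ alone — one also needs $\be$ small enough that $d(\psi_{\pm L}(y),\psi_{\pm L}(x))<\de$ so the brackets at $\pm L$ are well-defined in the sense of~\ref{caco} and the cone estimate applies — but this is a routine shrinking and does not affect the argument's substance.
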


    \begin{proof}\quad
  
  Let $C$, $\cU$, $U$ $\eta_0>0$ and $B$ be from Proposition~\ref{B16}.
  The continuity of the hyperbolic splitting implies that the angle
  $\measuredangle(E^s,E^u)$ is bounded away from zero.
  As in the argument after \eqref{dpsvd}, there are invariant families of cones separating
  $E^s$ from $E^u$ whose image under the exponential map contain the local invariant 
  manifolds $W^{ss}_\ga$, $W^{uu}_\ga$.
  And hence as in~\eqref{a1xwy}
  there are $A,\,\be_1>0$ such that if $x,\, y\in \La^Y_U$,
  $d(x,y)<\be_1$ and 
  $$
  w=\langle x,y\rangle =W^{ss}_\ga(\psi_v(x))\cap W^{uu}_\ga(y),
  $$
  then 
  \begin{equation}\label{wpsiv}
  d(w,\psi_v(x))+d(w,y) \le A\, d(\psi_v(x),y).
  \end{equation}
  Suppose that $0<\be<\min\{\tfrac 1B\eta_0,\,\be_1\}$
  and 
  $x$, $y$, $s(t)$, $\psi^Y_t$, $L$ satisfy~\eqref{csh2}.
  Apply Proposition~\ref{B16} with $\eta:= B\be$.
  
  Then $|s(L)|\le 3\eta$, 
  and 
  \begin{align*}
  d(\psi_L(y),\psi_L(x))
  &\le d(\psi_{L+s(L)}(y),\psi_L(x))+|s(L)| \cdot \Vert Y\Vert_{\sup}
  \\
  &\le \be + 3\eta \lV Y\rV_{\sup} < \de,
  \end{align*}
  if $\be$ is small enough, where $\de$ is from~\ref{caco}.
  So that $\langle \psi_L(x),\psi_L(y)\rangle$ is well defined.
  Similarly $|s(-L)|\le 3\eta$ and
  $d(\psi_{-L}(y),\psi_{-L}(x))<\de$.
  Since the time $t$ map $\psi_t$ preserves the family of
  strong invariant manifolds,
  in equation~\eqref{d<egdd1} we have that 
  \begin{align*}
  \psi_L(w) &=\langle \psi_{L}(x),\psi_L(y)\rangle =
  W^{ss}_\ga(\psi_{L+v}(x))\cap W^{uu}_\ga(\psi_L(y)),
  \\
    \psi_{-L}(w) &=\langle \psi_{-L}(x),\psi_{-L}(y)\rangle =
  W^{ss}_\ga(\psi_{-L+v}(x))\cap W^{uu}_\ga(\psi_{-L}(y)).
  \end{align*}
  Therefore, using~\eqref{wpsiv},
  \begin{align}
  d(\psi_L(w),\psi_L(y))+d&(\psi_{-L}(w),\psi_{-L+v}(x))
  \notag\\
  &\le  A \big[ d(\psi_{L+v}(x),\psi_L(y)) + d(\psi_{-L+v}(x),\psi_{-L}(y)) \big],
   \label{wxyd}
   \\
  d(\psi_{L+v}(x),\psi_{L}(y)) &\le
  d(\psi_{L+v}(x),\psi_{L}(x))+d(\psi_{L}(x),\psi_{L+s(L)}(y))
  +d(\psi_{L+s(L)}(y),\psi_{L}(y))
  \notag \\
  &\le |v| \lV Y\rV_{\sup}+\be+ |s(L)|\,\lV Y\rV_{\sup}
  \notag\\
  &\le B_1 \be,
  \notag
  \end{align}
  for some $B_1=B_1(\cU)>0$, because by Proposition~\ref{B16},
  $|v|\le \eta$, $|s(t)|\le 3\eta$ and $\eta = B \be$, so that
  $$
  |v| \le B\be,\qquad |s(t)|\le 3 B \be.
  $$
  A similar estimate holds for $d(\psi_{-L+v}(x),\psi_{-L}(y))$
  and hence from~\eqref{wxyd},
  $$
  d(\psi_L(w),\psi_L(y))+d(\psi_{-L}(w),\psi_{-L+v}(x))
  \le  2AB_1 \,\be.
  $$
  Replacing this  in~\eqref{d<egdd1} we have that
  $$
  \forall |s|\le L,\qquad
  d(\psi_s(y),\psi_{s+v}(x))\le D_1\,\be \,\ee^{-\la(L-|s|)},
  $$
  where $D_1=2 A B_1 C$.
  
  By~\eqref{wxyd} and~\eqref{d<egdd1} we also have that 
  $$
  d(\psi_s(y),\psi_{s+v}(x))\le
  AC\,\ee^{-\la(L-|s|)}\,
  \big[ d(\psi_L(y),\psi_{L+v}(x)) + d(\psi_{-L}(y),\psi_{-L+v}(x)) \big].
  $$
  Now take $D:=\max\{ D_1,\,B,\,3B,\,AC\,\}$.
  
  \end{proof}

  \begin{Definition}\label{dfe}\quad
  
  We say that $\psi|_\La$ is {\it flow expansive} if for every 
  $\eta>0$ there is $\ov\a=\ov\a(\eta)>0$ such that 
  if $x\in\La$, $y\in M$ and  there is $s:\re\to\re$ continuous
  with $s(0)=0$ and $d(\psi_{s(t)}(y),\psi_t(x))\le\ov \a$ for all $t\in\re$,
  then
  $y=\psi_v(x)$ for some 
  $|v|\le \eta$.
  \end{Definition}

  \begin{Remark}\label{rue}\quad
  
  Observe that Proposition~\ref{B16} implies 
  uniform expansivity in a neighbourhood of $(X,\La)$, namely
  there are neighbourhoods $X\in\cU\subset\fX^1(M)$ and $\La\subset U\subset M$
  such that for every $\eta>0$ there is $\a=\a(\eta,\cU,U)>0$ such that if
  $x\in\La^Y_U:=\cap_{t\in\re}\psi^Y_t(\ov U)$, $y\in M$, $s:(\re,0)\to (\re,0)$ 
  continuous and $\forall t\in \re$, $d(\psi^Y_{s(t)}\big(y),\psi^Y_t(x)\big)<\a$;
  then $y=\psi^Y_v(x)$ for some $|v|<\eta$.
  
  This also implies uniform h-expansivity as in Definition~\ref{duhe}.
  \end{Remark}
  
  \medskip
  
  \begin{Corollary}[\bf Uniform expansivity and  uniform h-expansivity]\label{Rfe}\quad
  
  There are open sets $X\in\cU\subset\fX^1(M)$, $\La\subset U\subset M$
  such that 
  
  \noindent 
  for all $Y\in\cU$ the set $\bigcap_{t\in\re}\psi^Y_t(\ov U)$ is hyperbolic for 
  $\psi^Y_t$ with uniform hyperbolic constants $C,\,\la$ on $\cU$ and
  $$
  \forall \eta>0 \qquad \exists \a=\a(\eta)>0 \qquad \forall Y\in\cU
  $$
  if $\psi^Y_t$ is the flow of $Y$, \;$x,y\in\bigcap_{t\in\re}\psi^Y_t(\ov U)$, 
  $s:\re\to\re$ is continuous, $s(0)=0$ and
  $$
  \forall t\in\re \qquad 
  d\big(\psi^Y_{s(t)}(y),\psi^Y_t(x)\big)\le \a,
  $$
  then $y=\psi_v(x)$ for some $|v|\le \eta$.
  \end{Corollary}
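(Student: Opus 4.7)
The plan is to combine Proposition~\ref{unifhip}, which guarantees uniform hyperbolicity of the maximal invariant set $\La^Y_U:=\bigcap_{t\in\re}\psi^Y_t(\ov U)$ for all $Y$ in a $C^1$-neighbourhood $\cU$ of $X$, with Proposition~\ref{B16}, whose decay estimate~\eqref{dsh} will give the conclusion after sending $L\to\infty$. So the first step is to shrink $\cU$ and $U$ so that the conclusions of both Proposition~\ref{unifhip} and Proposition~\ref{B16} hold on the same neighbourhoods, with uniform hyperbolic constants $C,\la$ and a uniform assignment $\eta\mapsto \be(\eta)$.

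Given $\eta>0$, set $\a:=\be(\eta)$ supplied by Proposition~\ref{B16}. Fix any $Y\in\cU$, $x,y\in\La^Y_U$, and a continuous $s:\re\to\re$ with $s(0)=0$ such that $d(\psi^Y_{s(t)}(y),\psi^Y_t(x))\le \a$ for every $t\in\re$. For each $L>0$ the hypothesis~\eqref{csh} of Proposition~\ref{B16} holds on $[-L,L]$, so the proposition produces a real number $v=v(x,y)$ with $|v|\le \eta$ and
$$
d(\psi^Y_s(y),\psi^Y_{s+v}(x))\le C\,\ga\,\ee^{-\la(L-|s|)}\qquad \text{for all }|s|\le L.
$$
The crucial point is that $v(x,y)$ is determined intrinsically by the canonical coordinates in~\ref{caco}, namely $\langle x,y\rangle\in W^{ss}_\ga(\psi^Y_v(x))\cap W^{uu}_\ga(y)$; in particular, $v$ does not depend on $L$. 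Fixing $s\in\re$ and letting $L\to+\infty$ the right-hand side tends to $0$, hence $\psi^Y_s(y)=\psi^Y_{s+v}(x)$ for every $s\in\re$. Taking $s=0$ yields $y=\psi^Y_v(x)$ with $|v|\le \eta$, which is precisely the uniform flow-expansivity statement of the Corollary.

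For the h-expansivity clause asserted in the title (and invoked via Remark~\ref{rue} and Definition~\ref{duhe} in Section~\ref{suexpan}), use uniform continuity of the map $(Y,t,p)\mapsto \psi^Y_t(p)$ on the compact set $\ov\cU\times[-1,1]\times M$ to pick $\e>0$ such that $d(p,q)<\e$ and $|t|\le 1$ force $d(\psi^Y_t(p),\psi^Y_t(q))<\a$ for every $Y\in\cU$. Then for any $x\in \La^Y_U$ and any $y\in \Ga_\e(x,\psi^Y_1)$ one has $d(\psi^Y_t(y),\psi^Y_t(x))<\a$ for every $t\in\re$, so the flow-expansivity conclusion (with $s\equiv 0$) gives $y\in \psi^Y_{[-\eta,\eta]}(x)$. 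Since an orbit segment is a finite union of arcs and has zero topological entropy, this establishes uniform h-expansivity with a constant depending only on $\cU$ and $U$.

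The only potentially delicate point is the $L$-independence of $v(x,y)$: Proposition~\ref{B16} a priori produces $v$ for each $L$, but inspection of the proof shows that $v$ is always determined by the same canonical-coordinate equation $\langle x,y\rangle=W^{ss}_\ga(\psi^Y_v(x))\cap W^{uu}_\ga(y)$, so it is the same $v$ for every $L$. Everything else is a routine passage to the limit and a soft continuity argument; no new hyperbolic-dynamics input beyond what is already packaged in Propositions~\ref{unifhip} and~\ref{B16} is needed.
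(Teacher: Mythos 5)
Your proof is correct and follows essentially the same route the paper intends: the Corollary is packaged in Remark~\ref{rue} as a direct consequence of Proposition~\ref{B16} (with uniform constants coming from Proposition~\ref{unifhip}), by applying the decay estimate~\eqref{dsh} on $[-L,L]$ and sending $L\to+\infty$, noting that $v=v(x,y)$ is fixed by the canonical coordinates~\ref{caco} and so is independent of $L$. Two small points to tidy up. First, the Corollary's hypothesis is $d\big(\psi^Y_{s(t)}(y),\psi^Y_t(x)\big)\le\a$ with $s$ the full reparametrization, whereas the hypothesis~\eqref{csh} of Proposition~\ref{B16} is $d(\psi_{t+s(t)}(y),\psi_t(x))\le\be$ with $s$ an additive offset; you need to substitute $\tilde s(t):=s(t)-t$ (which still satisfies $\tilde s(0)=0$) before saying that~\eqref{csh} holds, and symmetrically the trivial reparametrization in the Corollary's notation is $s(t)=t$, not $s\equiv 0$ as you write when specializing to h-expansivity. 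Second, you invoke uniform continuity of $(Y,t,p)\mapsto\psi^Y_t(p)$ on $\ov\cU\times[-1,1]\times M$ as a compact set, but $\ov\cU$ is the closure of an open set in the Banach space $\fX^1(M)$ and is not compact; the fact you want is correct but should be justified by the uniform $C^1$ bound on $Y\in\cU$, which gives a Lipschitz constant for $\psi^Y_t$ in $p$ that is uniform over $Y\in\cU$ and $|t|\le 1$.
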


  \subsection{Shadowing.}\quad

  \bigskip

  \begin{Definition}\label{B8}\quad
  
  Let $L>0$, we say that $(T,\Ga)$ is an $L$-specification if
  \begin{enumerate}[(a)]
  \item $\Ga=\{x_i\}_{i\in\Z}\subset \La$.
  \item $T=\{t_i\}_{i\in\Z}\subset\re$\quad and\quad  $t_{i+1}-t_i\ge L$\; $\forall i\in\Z$.
  \end{enumerate}
  We say that the specification $(T,\Ga)$ is $\de$-possible if
  $$
  \forall i\in\Z\qquad d(\psi_{t_i}(x_i),x_{i+1})\le \de.
  $$ 
  \end{Definition}
  
  If $s:\re\to\re$ we denote
  \begin{align*}
  U_\e(s,T,\Ga):&=\big\{\,y\in M\,\big|\, d(\psi_{t+s(t)}(y),\psi_t(x_i))\le\e\quad\text{for }
  t\in]t_i,t_{i+1}[\,\big\};
  \\
  STEP_\e(T):&= \big\{\, s \;\big|\; s|_{]t_i,t_{i+1}[} \text{ is constant,}\;
  s(t_i)\in\{ s(t_i-),\,s(t_i+)\}, 
  \\
  &\hskip 2cm |s(t_0)|\le \e \text{ and } |s(t_i+)-s(t_i-)|\le \e\;\big\};
  \\
  U^*_\e(T,\Ga):&=\bigcup \big\{\,U_\e(s,T,\Ga)\;|\;s\in STEP_\e(T)\,\big\}.
  \end{align*}
  
  If $y\in U^*_\e(T,\Ga)$ we say that  the point $y$ $\e$-{\it shadows} the specification
  $(T,\Ga)$. 
  
  \bigskip

  \begin{Remark}\label{RSHL}\quad
  \begin{enumerate}[(a)]
  \item
  Observe that a function $s\in STEP_\e(T)$ is possibly discontinuous. But
  from the conditions in $STEP_\e(T)$ and $U_\e(s,T,\Ga)$ it is easy to
  replace $s$ by a continuous function satisfying~\eqref{csh} with $\be= K \e$.
  Indeed, replace $s$ by
  \begin{equation}\label{sie1}
  \si(t) = s(t_i-\e)+ \frac{s(t_i+\e)-s(t_i-\e)}{2\e}\big( t-(t_i-\e)\big)
  \quad\text{if}\quad t\in[t_i-\e,t_i+\e],
  \end{equation}
  and $\si(t)=s(t)$ otherwise. Then
  \begin{align}
  d(\psi_{t+\si(t)}(y),\psi_{t+s(t)}(y))&\le \lV\partial_t\psi\rV_{\sup} |\si(t)-s(t)|
  \le  \lV\partial_t\psi\rV_{\sup} \e,
  \notag\\
   d(\psi_{t+\si(t)}(y),\psi_t(x_i))&\le \e +  \lV\partial_t\psi\rV_{\sup} \e
   = K\,\e \qquad \text{ when }\quad |t-t_i|\le\e.
   \label{Ke1}
   \end{align}
   \item  In~\eqref{sie1} the continuous function $t+\si(t)$ is strictly increasing.
   Indeed, $s(t)$ is constant on each interval $]t_i,t_{i+1}[$ and
   $|s(t_{i}+)-s(t_i-)|\le \e$, therefore
   $$
   |\si'(t)| =\lv\tfrac{s(t_i+)-s(t_i-)}{2\e}\rv\le\tfrac 12 
   \quad\text{ on } t\in[t_i-\e,t_i+\e], 
   \quad \si'(t)=0 \text{ otherwise}.
   $$
   And thus
   $$
   \tfrac{d\,}{dt}(t+\si(t))\ge 1-\tfrac 12 >0, \qquad t\ne t_i.
   $$
   \item\label{B7c}
    Similarly we can modify the function s by a function $\si$ which 
   is continuous, strictly increasing, satisfying \eqref{Ke1} for some $K$ 
   independent of $\e$ and also $\si(t_0)=0$.
   Indeed, define
  $$
  \si_2(t) =
  \begin{cases}
   \frac1{3\e}(t-t_0) \,s(t_0+3\e) &\text{if} \quad t-t_0\in[0,3\e],
   \\
   \frac 1{3\e}(t_0-t) \, s(t_0-3\e) &\text{if} \quad t-t_0\in[-3\e,0],
   \\
   \si(t) &\text{if} \quad t-t_0\notin[-3\e,3\e].
  \end{cases}
  $$
  Then $\si_2(t_0)=0$.
  Since $|s(t_0)|\le \e$ and $|s(t_0+)-s(t_0-)|\le\e$, if $\e<L/3$ we have that
 $|s(t_0+3\e)|=|s(t_0+)|\le 2\e$ and  $|s(t_0-3\e)|=|s(t_0-)|\le 2\e$. Therefore
 $|\si_2'(t)|\le \frac23$ and then
 $$
 \tfrac{d\,}{dt}(t+\si_2(t))\ge 1 -\tfrac 23 >0
 \qquad \text{if }\quad 0<|t-t_0|\le 3\e.
 $$
 Also $|\si_2(t)-s(t)|\le \max\{|s(t_0+)|,|s(t_0-)|\}\le 2\e$
  and the argument in \eqref{Ke1} gives
  $$
     d(\psi_{t+\si(t)}(y),\psi_t(x_i))\le \e +  \lV\partial_t\psi\rV_{\sup}  2\e
   = :K_2\,\e \qquad \text{ when }\quad |t-t_0|\le3\e.
  $$

   \end{enumerate}
   \end{Remark}
   \bigskip
  
  \begin{Theorem}[Bowen~\cite{Bowen6} Thm. (2.2) p. 6]\label{SHL}
  \quad
  
  Given $L>0$ there are $\de_0, \,Q>0$ such that if $0<\de<\de_0$ and
  $(T,\Ga)$ is a $\de$-possible $L$-specification on $\La$ then
  $U^*_\e(T,\Ga)\ne \emptyset$ with $\e= Q\de$.   \end{Theorem}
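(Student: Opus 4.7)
The plan is to build the shadowing point $y$ as a compactness limit of shadowing points for finite truncations of the specification $(T,\Ga)$, following Bowen's strategy~\cite{Bowen6} and exploiting the uniform tools already developed in this appendix. The central devices are the canonical coordinates $\langle\cdot,\cdot\rangle$ from~\ref{caco}, which let us intersect local stable and unstable manifolds; the exponential contraction estimates of Proposition~\ref{pHPS}; and Lemma~\ref{B4}, which bounds both the time shift $|v|$ and the size of the bracket by $B$ times the gap distance. Once $L$ is fixed large enough that $C\ee^{-\la L}<1$, the hyperbolic contraction over each gap dominates the accumulated corrections, and this will ultimately determine $\de_0$ and $Q$.

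For each $N$, I would construct a finite shadower $y_N$ and a step function $s_N$ inductively. Set $z_{-N}:=x_{-N}$. Given $z_i$ already arranged so that $\psi_{t_i}(z_i)$ is close to $\psi_{t_i}(x_i)$, the $\de$-possibility gives $d(\psi_{t_i}(x_i),x_{i+1})\le\de$, so $d(\psi_{t_i}(z_i),x_{i+1})$ is of order $\de$. Applying Lemma~\ref{B4} to this pair produces a time shift $|v_i|\le B\de$ and the bracket
\[
w_i\;=\;\langle\psi_{t_i}(z_i),x_{i+1}\rangle\;\in\;W^{ss}_\eta\big(\psi_{v_i}\psi_{t_i}(z_i)\big)\cap W^{uu}_\eta(x_{i+1}),
\]
with $\eta\le B\de$. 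One then replaces $z_i$ by $z_{i+1}:=\psi_{-t_i-v_i}(w_i)$, a new orbit point whose forward flow re-enters the local unstable set of $x_{i+1}$. Cumulating the small shifts defines $s_N(t):=\sum_{j\le i}v_j$ on $]t_i,t_{i+1}[$, a piecewise constant function with jumps bounded by $B\de$, placing $s_N$ in $STEP_{B\de}(T)$ after a minor cosmetic adjustment near $t_0$ to enforce $s_N(t_0)=0$.

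The key estimate is that the orbit of $y_N$ (obtained by flowing $z_N$ back to a common reference time) $\e$-shadows the truncated specification with $\e=Q\de$. Each step introduces a correction of size $O(\de)$, but Proposition~\ref{pHPS} forces this correction to decay by at least $C\ee^{-\la L}$ per subsequent gap forward along the unstable direction, and symmetrically backward along the stable direction. Summing these contributions over all indices $j\ne i$ yields a geometric series whose sum is bounded by $Q\le B+2C/(1-C\ee^{-\la L})$, depending only on the hyperbolic constants and $L$. Compactness of $M$ then extracts a convergent subsequence $y_{N_k}\to y$, and the uniformly bounded step functions $s_{N_k}$ admit a diagonal subsequential limit $s\in STEP_{Q\de}(T)$. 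The shadowing inequality is closed in $(y,s)$ on each $]t_i,t_{i+1}[$, so it passes to the limit and gives $y\in U^*_{Q\de}(T,\Ga)$. The main obstacle throughout is to guarantee that every successive bracket stays inside the injectivity range $\de$ of~\eqref{ecaco}; this is precisely what forces $\de_0$ small enough that the total accumulated correction, bounded by the geometric series, never exceeds this threshold.
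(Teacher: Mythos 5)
The paper does not prove Theorem~\ref{SHL}; it cites Bowen~\cite{Bowen6} (Theorem~2.2) and Palmer~\cite{Palmer} (Theorem~9.3) and only remarks that local maximality of $\La$ is not needed and that the explicit rate $\e=Q\de$ follows from Palmer's version. Your proposal attempts an actual proof in Bowen's spirit: truncate, iterate brackets, control the accumulated error by a geometric series, and pass to a compactness limit. That architecture is the right one, but the inductive step as written is incorrect.

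With the convention of~\ref{caco} and Lemma~\ref{B4}, $\langle a,b\rangle$ lies in $W^{ss}_\ga(\psi_v(a))\cap W^{uu}_\ga(b)$: by~\eqref{wuue} and its companion, its \emph{forward} orbit stays near the orbit of $a$ and its \emph{backward} orbit stays near that of $b$. You set $w_i=\langle\psi_{t_i}(z_i),x_{i+1}\rangle$ and $z_{i+1}=\psi_{-t_i-v_i}(w_i)$. Then, from time $t_i+v_i$ backward, $z_{i+1}$ tracks the backward orbit of $x_{i+1}$ — which is neither the specification on the earlier blocks nor the orbit of $z_i$ that inductively shadowed them — and, from $t_i+v_i$ forward, it tracks the forward orbit of $z_i$, which carries no information about the new target $x_{i+1}$. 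So the step neither extends shadowing to the new block nor preserves it on the old ones. The bracket arguments must be swapped, $w_i=\langle x_{i+1},\psi_{t_i}(z_i)\rangle$, so that the forward orbit is pinned to $\psi_v(x_{i+1})$ (the new block) and the backward orbit to $\psi_{t_i}(z_i)$ (the old shadowing). Relatedly, the stated contraction directions are reversed: by Proposition~\ref{pHPS}, stable-manifold corrections decay in \emph{forward} time and unstable-manifold corrections in \emph{backward} time, not the other way around. Once both flips are made, each bracket contributes an $O(\de)$ unstable correction at $t_i$ that is damped by $C\ee^{-\la(t_i-t_j)}$ at earlier times $t_j$, and the geometric-series bound and the compactness limit you describe close the argument. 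As written, however, the induction does not preserve the shadowing estimate, so there is a genuine gap.
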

  
  \medskip 
  
  This Theorem is proven in Bowen~\cite{Bowen6} with a similarly
  presented statement without the estimate on $\e$. 
  In the context of Bowen~\cite{Bowen6} the
  set $\La$ is locally maximal but this is not 
  needed for Theorem~\ref{SHL}. 
  A proof of this theorem for flows without the local maximality hypothesis
  and with the explicit estimate on $\e$ appears in Palmer \cite{Palmer}
  Theorem~9.3, p. 188. In \cite{Palmer}, \cite{Palmer2009} the theorem 
  requires an upper bound on the lengths of the intervals in $T$. 
  This is because there the theorem is proven also for perturbations 
  of the flow. Indeed by Proposition~\ref{B16} longer intervals in 
  $T$  {\sl improve} the estimate on $\e$.
  
  \bigskip
  
  \begin{Remark}\quad
  \begin{enumerate}[(a)]
  \item Theorem~\ref{SHL} does not require the local maximality of $\La$. 
  \item Without the local maximality the shadowing orbit may not be in $\La$.
  \item In Palmer~\cite{Palmer} Theorem~9.3, p.~188 there is a proof for this
           Theorem where a specification for $\phi$ in $\La$ is shadowed by
           a perturbation $\psi$ of the flow. It requires an upper bound in the 
           lengths of the intervals in $T$ and the estimate is $\e=M(\de+\si)$,
           where $\si$ is the $C^1$ distance of their vector fields.
  \item  It is possible to shadow specifications which are in a neighbourhood
            of $\La$. Namely, given $\e>0$, $L>0$ there is $\de>0$ and a 
            neighbourhood $U(\La)$ of $\La$ such that  if $(T,\Ga)$ is a 
            $\de$-possible $L$-specification on $U(\La)$  then 
            $U^*_\e(T,\Ga)\ne\emptyset$.
  \item If $y\in U_\e(s,T,\Ga)$, by Remark~\ref{RSHL},  $s(t)$ can be replaced 
           by a continuous function
           satisfying~\eqref{csh2}  with $\be=\e K_2$ and such that $t\mapsto t+s(t)$ 
           is strictly increasing and $s(t_0)=0$. By Corollary~\ref{B71}, $|s(t)|\le \e K_3$ for some 
           $K_3>0$. 
  \item  If the specification is periodic  with period $T$ and $y\in U_\e(s,T,\Ga)$, 
           with $\si(t):=t+s(t)$ a homeomorphism we have that 
           $d(\psi_{\si(t)}(y),\psi_{\si(t+T)}(y))\le 2\e$, $\forall t\in\re$.
           By the flow expansivity of $\psi$ in $\La$ (Remark~\ref{rue}),
           if $\e$ is small enough then there is $\tau\in\re$ with $\psi_\tau(y)=y$.
           Then $y$ is a periodic point.
  
          \end{enumerate}
  \end{Remark}
  
   Therefore we get           
   
   \newpage

  \begin{Corollary}\label{CSH}\quad
  
    Given $\ell>0$ there are $\de_0=\de_0(\ell)>0$ and $Q=Q(\ell)>0$ 
    such that if $0<\de<\de_0$ and
  $(T,\Ga)=(\{t_i\},\{x_i\})_{i\in\Z}$ is a $\de$-possible $\ell$-specification on $\La$ then
  there exist $y\in M$ and $\si:\re\to\re$ continuous, piecewise linear,  strictly increasing with 
  $\si(t_0)=t_0$ and $|\si(t)-t| < Q\,\de$  such that
  $$
  \forall i\in \Z \quad
   \forall t\in]t_i,t_{i+1}[
   \qquad
  d\big(\psi_{\si(t)}(y),\psi_t(x_i)\big)< Q\,\de.
  $$
  Moreover, if the specification is periodic then $y$ is a periodic point for $\phi$.
   \end{Corollary}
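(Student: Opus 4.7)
The plan is to invoke Theorem~\ref{SHL} (Bowen's shadowing) to produce a shadowing point $y$ together with a step reparametrization $s\in STEP_\e(T)$, smooth $s$ into a continuous, piecewise linear, strictly increasing function $\sigma$ using the construction in Remark~\ref{RSHL}(\ref{B7c}), extract the uniform bound $|\sigma(t)-t|<Q\de$ from Proposition~\ref{B71}, and handle the periodic case by means of uniform flow expansivity (Corollary~\ref{Rfe}).

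Concretely, I would take $\de_0(\ell)$ and $Q_0:=Q(\ell)$ from Theorem~\ref{SHL}, set $\e:=Q_0\de$, and pick $y\in U^*_\e(T,\Ga)$; by definition, some $s\in STEP_\e(T)$ then satisfies $d(\psi_{t+s(t)}(y),\psi_t(x_i))\le\e$ on every $]t_i,t_{i+1}[$. After shrinking $\de_0$ so that $\e<\ell/3$, the procedure of Remark~\ref{RSHL}(\ref{B7c})---linear interpolation across each discontinuity on a window of width $3\e$, together with an overall translation so that $\widetilde s(t_0)=0$---produces a continuous, piecewise linear, strictly increasing $\widetilde s$ with the shadowing estimate $d(\psi_{t+\widetilde s(t)}(y),\psi_t(x_i))\le K_2\e$ for a universal constant $K_2$. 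Setting $\sigma(t):=t+\widetilde s(t)$ delivers the function in the statement, with $\sigma(t_0)=t_0$, strictly increasing and piecewise linear. To upgrade this to the bound $|\sigma(t)-t|<Q\de$, I would feed the continuous shadowing into Proposition~\ref{B71} on each interval $[t_i,t_{i+1}]$ (whose length is at least $\ell$), exploiting that the constant $D$ of that proposition does not depend on the length of the observation window, and then absorb $Q_0$, $K_2$ and $D$ into a single $Q$.

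For the periodic claim, if $(T,\Ga)$ has period $\tau$, then $\psi_\tau(y)$ also shadows the same specification, so both $y$ and $\psi_\tau(y)$ stay close to the doubly infinite orbit sequence emanating from points of $\La$. Applying Corollary~\ref{Rfe} with $\de_0$ shrunk so that $Q\de$ lies below the expansivity scale $\a(\eta)$ then forces $\psi_\tau(y)=\psi_v(y)$ for some small $|v|$, so $y$ is periodic. The main obstacle I anticipate is the global bound $|\sigma(t)-t|<Q\de$: the definition of $STEP_\e(T)$ only bounds each jump individually, so the delicate point is to exploit the universality of $D$ in Proposition~\ref{B71}---and to coordinate it with Theorem~\ref{SHL}, Remark~\ref{RSHL} and Corollary~\ref{Rfe} through a single choice of $\de_0(\ell)$---in order to rule out an unbounded drift of $\widetilde s$ across infinitely many intervals.
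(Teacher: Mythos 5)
Your proposal reproduces the paper's own argument almost verbatim: Corollary~\ref{CSH} is stated immediately after the Remark that lists exactly this chain (Theorem~\ref{SHL} to produce $y\in U^*_\e(T,\Ga)$ with $\e=Q\de$, Remark~\ref{RSHL}(\ref{B7c}) to replace $s\in STEP_\e(T)$ by a continuous, piecewise linear, strictly increasing reparametrization vanishing at $t_0$, Proposition~\ref{B71} for the uniform bound $|s(t)|\le K_3\e$, and flow expansivity from Remark~\ref{rue}/Corollary~\ref{Rfe} to close up a periodic shadow), and the word ``Therefore'' just before the Corollary indicates that this Remark \emph{is} the paper's proof. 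Your identification of the delicate point --- preventing drift of $\widetilde s$ across infinitely many blocks, since $STEP_\e(T)$ only controls individual jumps --- is also the point the paper leaves implicit in its citation of Proposition~\ref{B71}; the intended resolution is the bootstrap $A=B=\emptyset$ used in the proof of Proposition~\ref{B16} (which forces $|s(t)|$ back below $3\eta$ whenever it threatens to reach it), applied to the shadowed pseudo-orbit with an extra $O(\de)$ error at each transition time $t_i$, and that is exactly the ``universality of $D$'' you are appealing to. One small wording nit: in the periodic case, the cleaner statement (as in Remark (f) of the paper) is that $d(\psi_{\si(t)}(y),\psi_{\si(t+T)}(y))\le 2\e$ for all $t$, from which Corollary~\ref{Rfe} applied to $y$ and $\psi_T(y)$ (both in the maximal invariant set $\bigcap_t\psi_t(\ov U)$ once $\e$ is small) gives $\psi_T(y)=\psi_v(y)$; your phrasing ``both stay close to the orbit sequence'' is the same estimate, just stated indirectly. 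So: correct, and essentially the paper's route.
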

   
   \bigskip

    \section{Structural Stability.}\label{asst}

    As we shall see the structural stability result presented here
    does not need the local maximality of the hyperbolic set.
    
    Let $M$ be a $C^\infty$ compact manifold     and $\phi$ a $C^k$ flow on
    $M$. Let $\La$ be a hyperbolic set for $\phi$.
    Define
    \begin{align*}
    C^\a(\La,M) &:=\big\{\, u:\La\to M\;\big|\;
    u \text{ is $\a$-H\"older continuous }\}.
    \end{align*}
    This space has the structure of a Banach manifold
    modelled by the Banach space $C^\a(\La,\re^n)$ 
    with the norm $\lV f\rV:=\lV f\rV_0+\lV f\rV_\a$,
    where for $f\in C^\a(\La,\re^n)$,
    \begin{align*}
    \lV f\rV_0:=\sup_{x\in\La}|f(x)|,
    \qquad
    \lV f\rV_\a:=\sup_{x\ne y}\frac{|f(x)-f(y)|}{d(x,y)^\a}.
    \end{align*}

   Let
    \begin{align*}
    C^0_\phi(\La,M)&:=\big\{\,u\in C^0(\La, M)\;\big|\;
    D_\phi u(x):=\tfrac{d\,}{dt}u(\phi_t(x))\big\vert_{t=0}
    \text{ exists }\big\},
    \\
    C^\a_\phi(\La,M) &:=\big\{\,u\in C^\a(\La,M)\;\big|\;
    D_\phi u(x):=\tfrac{d\,}{dt}u(\phi_t(x))\big\vert_{t=0}
    \text{ exists and is $\a$-H\"older }\}.
    \end{align*}
    
    Let $X$ be the vector field of $\phi$. 
    The structural stability of the hyperbolic set $\La$ can be
    written as a solution $(u,\ga)\in C^0_\phi(\La,M)\times C^0(\La,\re^+)$ 
    to the equation
    $$
    Y\circ u = \ga \;D_\phi u
    $$
    for a vector field $Y$ nearby $X$.
    Here $u$ is the topological equivalence and $\ga$ encodes
    the reparametrization of the flow. The following theorem
    says that such solutions can be obtained as implicit
    functions of $Y$.
    
    \medskip
    
    \pagebreak
    
    \begin{Theorem}\label{Fss}\quad
    
    Let $M$ be a $C^{k+1}$ compact manifold and $\phi$ a flow of a $C^k$ vector field $X$ on  $M$.
    Let $\fX^k$ be the Banach manifold  of $C^k$ vector fields on $M$.
    Suppose that $\La$ is a hyperbolic set for $\phi_t$. 
    Then 
    \begin{enumerate}[(a)]
    \item\label{Fssa}
     There exist $0<\be<1$, a neighbourhood $\cU\subset \fX^k(M)$ of $X$
    and $C^{k-1}$ maps 
    \linebreak
    $\cU\to C_\phi^\be(\La,M):$
    $Y\mapsto u_Y$ and $\cU\to C^\be(\La,\re^+):$ $Y\mapsto \ga_Y$
    such that 
    \begin{equation}\label{Fssa1}
    Y\circ u_Y = \ga_Y\,D_\phi u_Y.
    \end{equation}
    \item\label{Fssb} The maps $\cU\to C_\phi^0(\La,M):$
    $Y\mapsto u_Y$ and $\cU\to C^0(\La,\re^+):$ $Y\mapsto \ga_Y$
    are $C^k$.
     \end{enumerate}
    \end{Theorem}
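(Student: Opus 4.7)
My plan is to set up the equation (\ref{Fssa1}) as the zero locus of an operator between Banach manifolds and apply the implicit function theorem. Consider
$$
F : C^\be_\phi(\La,M) \times C^\be(\La,\re^+) \times \fX^k(M) \;\to\; C^\be(\La,TM),
\qquad F(u,\ga,Y) := Y\circ u - \ga\, D_\phi u,
$$
where the target is realized as sections of $u^*TM$, trivialized via a tubular neighborhood of $\mathrm{id}_\La$ inside $\La\times M$ using $\exp$. Clearly $F(\mathrm{id}_\La,1,X)=0$. The core of the proof is to show that the partial derivative $\cL := \partial_{(u,\ga)}F(\mathrm{id}_\La,1,X)$ is a bounded linear isomorphism in the H\"older scale.

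Working in the trivialization, $\cL$ sends $(v,\de\ga) \in C^\be(\La,TM)\times C^\be(\La,\re)$ to
$$
\cL(v,\de\ga) \;=\; DX\cdot v \;-\; D_\phi v \;-\; \de\ga\cdot X.
$$
Split $v = v^s + a\,X + v^u$ along the continuous hyperbolic decomposition $T_\La M = E^s \oplus \re X \oplus E^u$ (continuity of the splitting is part of hyperbolicity; H\"older regularity of the bundles holds for an appropriate exponent $\be$). The flow-direction equation becomes $(D_\phi a - \de\ga)\,X = g^0$, which one solves trivially by taking $a \equiv 0$ and $\de\ga = -g^0$. On $E^s$, the cohomological equation $DX\cdot v^s - D_\phi v^s = g^s$ is solved by the variation-of-parameters integral
$$
v^s(x) \;=\; -\int_0^\infty D\phi_{-t}\!\restriction_{E^s_{\phi_t x}}\, g^s(\phi_t x)\,dt,
$$
which converges in $C^\be$ with uniform bounds thanks to the exponential contraction $\lVert D\phi_t|_{E^s}\rVert \le C e^{-\la t}$; the analogous integral over $t\le 0$ solves the equation on $E^u$. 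Boundedness of the inverse in the H\"older norm follows from standard H\"older composition estimates since $\phi_t$ is bi-Lipschitz on $\La$.

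With $\cL$ invertible, the implicit function theorem on Banach manifolds yields a neighborhood $\cU\subset\fX^k(M)$ of $X$ and a solution map $Y\mapsto(u_Y,\ga_Y)$, proving (\ref{Fssa}). The regularity count is as follows: $F$ depends linearly on $Y$, and the nonlinearity in $u$ comes from composition with the $C^k$ vector field $Y$; each differentiation in $(u,Y)$ costs one H\"older derivative of $u$ on account of the chain rule, giving $C^{k-1}$ regularity of $Y\mapsto(u_Y,\ga_Y)$ into the H\"older space. For part (\ref{Fssb}), one reruns the same implicit function argument in the weaker scale where the target space is $C^0$ and the substitution operator $(Y,u)\mapsto Y\circ u$ is $C^k$ from $\fX^k(M)\times C^0(\La,M)\to C^0(\La,TM)$ (the Omega lemma in $C^0$), so that no derivative of $u$ is spent; this upgrades the dependence $Y\mapsto (u_Y,\ga_Y)$ to $C^k$ with values in $C^0_\phi(\La,M)\times C^0(\La,\re^+)$.

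The main obstacle is the inversion of $\cL$ in the H\"older category: one must verify that the integral formulas above produce \emph{H\"older} sections with bounds uniform in the data, and that $D_\phi$ of the resulting section again lies in $C^\be$ (so that the answer actually lives in $C^\be_\phi$ rather than a larger space). This is the classical trade-off that forces $\be$ to be strictly less than the H\"older exponent of the hyperbolic splitting, and it is the reason we lose one derivative in the H\"older formulation but recover full $C^k$ regularity in the $C^0$ formulation by the Omega-lemma trick.
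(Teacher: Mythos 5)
The paper does not actually prove Theorem~\ref{Fss}; it defers entirely to the literature, citing de~la~Llave--Marco--Moriy\'on for the original version, \cite[p.~591]{KKPW2} for item~(a), and \cite[p.~23ff.]{KKW} for item~(b), with the remark that the $k=1$ case there generalizes immediately. So there is no in-paper proof to compare your sketch against; I will compare it against the cited approach, which is indeed the implicit-function-theorem strategy you adopt.

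The main substantive gap in your write-up is the claim that $\cL=\partial_{(u,\ga)}F(\mathrm{id},1,X)$ is ``a bounded linear isomorphism.'' On the unrestricted space $C^\be(\La,TM)\times C^\be(\La,\re)$ this is false: plugging $v=aX$ into
$DX\cdot v-D_\phi v-\de\ga\cdot X$ gives $-(D_\phi a+\de\ga)X$, so the pair $(aX,\,-D_\phi a)$ lies in $\ker\cL$ for every $a\in C^\be_\phi(\La,\re)$. The kernel is infinite dimensional, and the implicit function theorem cannot be invoked. You implicitly recognize the degeneracy (``take $a\equiv 0$''), but choosing a right inverse is not enough; the operator must be \emph{bijective}. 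The standard and necessary fix --- which the paper explicitly records in the paragraph following the theorem statement --- is to impose in advance the transversality normalization $u_Y(x)\in\exp_x\big(X(x)^\perp\big)$ (or any other smooth family of local cross-sections to the flow). Equivalently, in your trivialization one must take the domain to be $C^\be$ sections of $E^s\oplus E^u$ (no flow component in $v$), not of all of $TM$. On that constrained space $\cL$ is an isomorphism via exactly the variation-of-parameters integrals you write, and uniqueness of $u_Y$ near the identity also follows. As a secondary point, the equation $DX\cdot v^s-D_\phi v^s=g^s$ needs a bit of care since $DX\cdot v^s$ is not a priori a section of $E^s$: the cleaner statement is that the Lie derivative $\cL_X=[X,\cdot\,]$ commutes with $D\phi_t$ and therefore preserves the invariant sub-bundles, which is what your integral formula implicitly uses. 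Once the transversal-section constraint is in place, your regularity bookkeeping ($C^{k-1}$ into H\"older by losing a derivative in the substitution operator, $C^k$ into $C^0$ via the $\Om$-lemma) matches the treatment in \cite{KKPW2} and \cite{KKW}, and the requirement that $\be$ be no larger than the H\"older exponent of the splitting is the correct constraint.
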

     
     The version for H\"older maps in item~\eqref{Fssa} is useful for 
     proving smooth dependence of equilibrium states, entropies and
     SBR measures, see~\cite{regu}. 
     For $Y$ near $X$ the topological equivalence $u_Y$ is uniquely
     determined if we require that
     $u_Y(x)\in\exp_x(X(x)^\perp)$
     and $u_Y$ near the identity. 
     One can change $\Ga(x)=\exp_x(X(x)^\perp\cap B(0,\de))$
     by any other smooth family of local transversal sections to the flow.
     
     The first version of Theorem~\ref{Fss}
     appears in de la Llave, Marco, Moriy\'on~\cite{LlMM}.
     Item~\eqref{Fssa} is proven in~\cite[p.~591]{KKPW2} and item~\eqref{Fssb}
     is proven in~\cite[p.~23ff.]{KKW} in the case $k=1$,
      but the proof can be immediately
     generalized to and arbitrary positive integer $k$.

     \begin{Corollary}\label{Css}\quad
     
     There is a neighbourhood $\cU\subset\fX^k(M)$ of $X$
     such that for every $Y\in\cU$ the map
      $u_Y$
     is a homeomorphism $u_Y:\La\to u_Y(\La)$.
     \end{Corollary}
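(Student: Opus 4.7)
The plan is to exploit the fact that $u_X=\operatorname{id}_\La$ (taking $(u_X,\ga_X)=(\operatorname{id},1)$ in \eqref{Fssa1}) and the $C^0$-continuity of $Y\mapsto u_Y$ from Theorem~\ref{Fss}\eqref{Fssb} to deduce that, on a small neighbourhood $\cU'\subset\cU$ of $X$, the map $u_Y$ is uniformly close to the identity on $\La$ and $\ga_Y$ is uniformly close to $1$. Since $\La$ is compact and $u_Y$ is continuous, if we can show $u_Y$ is injective, then $u_Y\colon\La\to u_Y(\La)$ is automatically a homeomorphism (a continuous bijection from a compact space to a Hausdorff one). So the task reduces to injectivity.

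To prove injectivity I will use \eqref{Fssa1} to show that $u_Y$ intertwines the $\phi$-orbits with the $\psi^Y$-orbits, up to a time change. Explicitly, setting $\tau_x(t):=\int_0^t\ga_Y(\phi_s(x))\,ds$, the identity $Y\circ u_Y=\ga_Y\,D_\phi u_Y$ implies that $t\mapsto u_Y(\phi_t(x))$ is the $\psi^Y$-trajectory through $u_Y(x)$ reparametrised by $\tau_x$, i.e.
\[
\psi^Y_{\tau_x(t)}(u_Y(x))=u_Y(\phi_t(x)),\qquad t\in\re.
\]
Suppose $u_Y(x)=u_Y(y)$ with $x,y\in\La$. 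Then both $\phi$-orbits are mapped into the same $\psi^Y$-orbit, so there is a continuous function $s\colon\re\to\re$ with $s(0)=0$ (namely $s=\si_y^{-1}\circ\tau_x$, with $\si_y$ the analogous reparametrisation for $y$) such that $u_Y(\phi_t(x))=u_Y(\phi_{s(t)}(y))$ for all $t$. Combining this with $\lV u_Y-\operatorname{id}\rV_0<\de$ yields $d(\phi_t(x),\phi_{s(t)}(y))<2\de$ for every $t\in\re$.

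At this point I invoke the uniform expansivity of $\phi$ on $\La$, which is Corollary~\ref{Rfe} (or Remark~\ref{rue}) applied with $Y=X$: for the prescribed $\eta>0$ there is $\a(\eta)>0$ such that if $2\de<\a(\eta)$, the inequality above forces $y=\phi_v(x)$ for some $|v|\le\eta$. Then
\[
u_Y(x)=u_Y(y)=u_Y(\phi_v(x))=\psi^Y_{\tau_x(v)}(u_Y(x)),
\]
so $u_Y(x)$ is a periodic point of $\psi^Y$ with period $\tau_x(v)$. Since $\La$ has no fixed points, $|X|\ge c_0>0$ on $\La$, hence $|Y|\ge c_0/2$ on the slightly enlarged compact set $u_Y(\La)$ for $Y\in\cU'$ shrunk further if necessary; the flowbox theorem at the non-singular point $u_Y(x)$ then gives an $r>0$ (uniform in $\cU'$ and in $x$) such that the map $t\mapsto\psi^Y_t(u_Y(x))$ is injective on $(-r,r)$. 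Choosing $\eta$ small enough that $|\tau_x(v)|\le\tfrac32|v|<r$ for $|v|\le\eta$ (using $\ga_Y\le 3/2$), we conclude $\tau_x(v)=0$, and since $\ga_Y>0$ this forces $v=0$ and $x=y$.

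The main obstacle is really step three: making the expansivity input legitimate requires verifying that the orbit-matching function $s(t)$ can be taken continuous with $s(0)=0$, and that the bound $2\de$ can be made smaller than the expansivity constant $\a(\eta)$ for any desired $\eta$; both are achieved by first fixing $\eta$ so small that $\tau_x(v)$ is within the flowbox range, then fixing $\de$ (hence $\cU'$) so that $2\de<\a(\eta)$. After these choices the rest is a direct chase through the identities above.
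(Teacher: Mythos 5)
Your proposal is correct and follows essentially the same route as the paper: reduce to injectivity via compactness, use \eqref{Fssa1} to show $u_Y$ sends $\phi$-orbits to time-reparametrised $\psi^Y$-orbits, deduce from $u_Y(x)=u_Y(y)$ and $\lV u_Y-\mathrm{id}\rV_0$ small that the $\phi$-orbits of $x$ and $y$ stay uniformly close, invoke expansivity to get $y=\phi_v(x)$ with $|v|\le\eta$, and kill $v$ by the absence of short $\psi^Y$-periodic orbits. The only cosmetic differences are that you appeal to the flowbox theorem at the nonsingular point $u_Y(x)$ while the paper instead fixes $\eta$ in advance so that $\psi^Y$-periodic orbits near $\La$ have period $>2\eta$ (the two are interchangeable), and a small slip in the reparametrisation: from $Y\circ u_Y=\ga_Y\,D_\phi u_Y$ the correct time change is $\dot s(t)=\ga_Y(\phi_t(x))^{-1}$, i.e.\ $\tau_x(t)=\int_0^t\ga_Y(\phi_s(x))^{-1}\,ds$, not $\int_0^t\ga_Y$; this does not affect the argument since $\ga_Y$ is pinched near $1$.
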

     \begin{proof}\quad

     Since $\La$ is compact and $u_Y$ is continuous, it is enough
     (cf. Rudin~\cite{rudin2} Theorem~4.17)
     to prove that $u_Y$ is injective for $Y$ near $X$.

     Let $\eta>0$ be such that every periodic orbit in $\La$ has period
     larger than $4\eta$.
     Let $\a=\ov\a(\eta)<\eta$ be a flow expansivity constant for $(\La,\phi_t)$ as in 
     Definition~\ref{dfe}.  There is a neighbourhood $\cU_0$ of $X$ such that
     for all $Y\in\cU_0$ every periodic orbit
     in $B(\La,\a):=$ $\{\, y\in M\,|\, d(y,\La)<\a\,\}$ 
     has 
     \begin{equation}\label{pereta}
     \text{ period larger than $2\eta$.}
     \end{equation}
     Since $u_X=id_\La$ and $\ga_X\equiv 1$, there is a neighbourhood 
     $\cU_1\subset\cU_0$  of $X$ such that 
     \begin{equation}\label{Yga}
      \forall Y\in\cU_1\quad \forall x\in \La \qquad 
     d\big(u_Y(x),x\big)<\tfrac 12 \a 
     \quad \text{ and }\quad
     \tfrac 12 < \ga_Y(x)< 2.
     \end{equation}

      Denote by $\psi^Y_s$  the flow of $Y\in\cU_1$.
      From equation~\eqref{Fssa1} we get that for $Y\in\cU_1$, $x\in\La$ and $t\in\re$ 
      we have that
      $$
      Y(\phi_t(x)) = \ga_Y(\phi_t(x))\, \tfrac{d\,}{ds} u_Y(\phi_s(x))\big\vert_{s=t}\,.
      $$
      This implies that the equation
      \begin{equation}\label{uypsiy}
      u_Y(\phi_t(x)) = \psi^Y_{s(t)}(u_Y(x))
      \end{equation}
      has a solution $s(t)$ with $s(0)=0$ satisfying
      \begin{equation}\label{sdga}
      \tfrac{ds}{dt}(t)= \ga_Y(\phi_t(x))^{-1}.
      \end{equation}
      Since $\tfrac{ds}{dt}>0$ we have that $s(t)$ has a continuous inverse 
      $t(s):\re\hookleftarrow$
      satisfying $t(0)=0$. Also
      $$
      \forall s\in\re\qquad
      u_Y(\phi_{t(s)}(x))=\psi^Y_s(u_Y(x)),
      $$
      and using~\eqref{Yga},
      $$
      \forall Y\in\cU, \quad \forall x\in\La,\qquad \forall s\in\re \qquad \quad
      d\big(\psi^Y_s(u_Y(x)),\phi_{t(s)}(x)\big)<\tfrac 12\,\a. 
      $$

     Suppose that $Y\in\cU_1$ and  $x,\,y\in \La$ are such that $u_Y(x)=u_Y(y)$.
     There are increasing homeomorphisms $t_1,\,t_2:(\re,0)\hookleftarrow$
     such that
     $$
      d\big(\psi^Y_s(u_Y(x)),\phi_{t_1(s)}(x)\big)<\tfrac 12\,\a
      \quad\text{ and }\quad
       d\big(\psi^Y_s(u_Y(y)),\phi_{t_2(s)}(y)\big)<\tfrac 12\,\a.
     $$
     Since $u_Y(x)=u_Y(y)$ we get that
     $$
     \forall \tau\in\re \qquad
     d(\phi_\tau(x),\phi_{t_2\circ t_1^{-1}(\tau)}(y))<\a.
     $$
     Since $t_2\circ t_1^{-1}$ is continuous and $t_2\circ t_1^{-1}(0)=0$,
     by the flow expansivity of $\phi_\tau$ we have that 
     $y=\phi_v(x)$ with $|v|<\eta$. Suppose that $v\ne 0$.
     Since by~\eqref{Yga}, $\ga_Y^{-1}<2$,
     the orbit segment from $x$ to $\phi_v(x)$ is sent by $u_Y$ to a closed
     orbit $\psi^Y_s(y)$ with a period smaller than $2 \eta$.
     This contradicts the choice of $\cU_1$ in~\eqref{pereta} and~\eqref{Yga}.
     Therefore $v=0$ and hence $y=x$.

     \end{proof}

     \begin{Proposition}\label{PSSLM}\quad
     
     If $k\in\na^+$, $\La\subset M$ is a hyperbolic set for the flow $\phi$ on $M$ 
     with vector field $X$
     and $V$ is an open neighbourhood of $\La$, 
     then there is an open set $U$ 
     such that $\La\subset U\subset \ov{U}\subset V$, 
     an open set  $X\in\cU\subset \fX^k(M)$,
     a subshift of finite type $\si:\Om\to\Om$,
     $0<\be<1$ and $C^{k-1}$ maps
     $\tau:\cU\to C^\be(\Om,\re^+)$, $Y\mapsto \tau_Y$
     and
     $\pi:\cU\to C^\be(\Om,M)$, $Y\mapsto \pi_Y$
     such that the natural extension
     of $\pi_Y$ to $\pi_Y:S(\Om,\tau_Y)\to M$
     is a well defined time preserving semiconjugacy
     \begin{equation}\label{SOmtauy}
     \begin{CD}
     S(\Om,\tau_Y) @> S_t >> S(\Om,\tau_Y)
     \\
     @V \pi_Y VV @VV \pi_Y V
     \\
     \bLa_Y @> \psi^Y_t >> \bLa_Y
     \end{CD}
     \end{equation}
     between the suspended flow $S_t$ of $\si$ in $S(\Om,\tau_Y)$
     and a hyperbolic set $\bLa_Y$ for the flow $\psi^Y_t$ of $Y$
     which satisfies
     \begin{equation}\label{maxinu}
     \forall Y\in\cU\qquad 
     \textstyle\bigcap\limits_{t\in\re}\psi^Y_t(\ov U)\subset\bLa_Y\subset V.
     \end{equation}
     In particular $\La\subset \bLa_X$.

     \end{Proposition}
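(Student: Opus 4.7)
The plan is to build the symbolic dynamics for the original flow $\phi$, and then transport it across the open set $\cU$ of nearby vector fields using the structural stability result of Theorem~\ref{Fss}. First I would enlarge $\La$ to a locally maximal hyperbolic set using Theorem~6.6.5 in \cite{FH}: by shrinking a given open $V\supset\La$ if necessary, I can pick an open $U$ with $\La\subset U\subset\ov U\subset V$ such that $\bLa_X:=\bigcap_{t\in\re}\phi_t(\ov U)$ is a hyperbolic set for $\phi$ admitting a Markov partition. This yields a subshift of finite type $\si:\Om\to\Om$ (with a metric $d_a$, $a>1$), a Lipschitz positive ceiling $\tau_X\in C^{0,1}(\Om,\re^+)\subset C^\be(\Om,\re^+)$, and a time-preserving Lipschitz semiconjugacy $\pi_X:S(\Om,\tau_X)\to \bLa_X$ between the suspended shift and $\phi_t|_{\bLa_X}$, with $\La\subset\bLa_X$.

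Next I would apply Theorem~\ref{Fss} to the hyperbolic set $\bLa_X$ (not just to $\La$): this gives $0<\be<1$, a neighbourhood $X\in\cU\subset\fX^k(M)$, and $C^{k-1}$ maps
\[
\cU\to C^\be_\phi(\bLa_X,M):\;Y\mapsto u_Y,\qquad
\cU\to C^\be(\bLa_X,\re^+):\;Y\mapsto \ga_Y,
\]
satisfying $Y\circ u_Y=\ga_Y\,D_\phi u_Y$ on $\bLa_X$, together with $u_X=\mathrm{id}_{\bLa_X}$ and $\ga_X\equiv 1$. By Corollary~\ref{Css} each $u_Y$ is a homeomorphism onto $\bLa_Y:=u_Y(\bLa_X)$, and integrating the defining equation along $\phi$-orbits as in \eqref{uypsiy}--\eqref{sdga} gives the reparametrization identity
\[
u_Y\bigl(\phi_t(x)\bigr)=\psi^Y_{s_Y(t,x)}\bigl(u_Y(x)\bigr),\qquad
s_Y(t,x):=\int_0^t \ga_Y(\phi_r(x))^{-1}\,dr.
\]
In particular $\bLa_Y$ is $\psi^Y$-invariant; persistence of hyperbolicity (Proposition~\ref{unifhip}) shows, after shrinking $\cU$, that $\bLa_Y$ is hyperbolic for $\psi^Y_t$ and sits inside $V$.

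With these ingredients in hand I would define the projection and ceiling by
\[
\pi_Y:=u_Y\circ\pi_X:\Om\to M,\qquad
\tau_Y(\om):=\int_0^{\tau_X(\om)} \ga_Y\bigl(\phi_r(\pi_X(\om))\bigr)^{-1}\,dr,
\]
and extend $\pi_Y$ to the suspension $S(\Om,\tau_Y)$ by $\pi_Y([\om,s]):=\psi^Y_s(\pi_Y(\om))$. The reparametrization identity above applied with $x=\pi_X(\om)$ and $t=\tau_X(\om)$ gives
\[
\pi_Y(\si\om)=u_Y\bigl(\phi_{\tau_X(\om)}(\pi_X(\om))\bigr)
=\psi^Y_{\tau_Y(\om)}\bigl(\pi_Y(\om)\bigr),
\]
so the diagram \eqref{SOmtauy} commutes. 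The H\"older regularity of $\tau_Y,\pi_Y\in C^\be(\Om,\cdot)$ follows by composing the Lipschitz maps $\tau_X,\pi_X$ with the $C^\be$ objects $u_Y,\ga_Y$ and integrating along the smooth flow of $X$; the $C^{k-1}$ dependence of $Y\mapsto(\tau_Y,\pi_Y)$ is inherited from Theorem~\ref{Fss}\eqref{Fssa}. Finally the inclusion $\bigcap_{t\in\re}\psi^Y_t(\ov U)\subset\bLa_Y$ follows from the local-maximality argument: by Proposition~\ref{unifhip} the set $\widetilde\bLa_Y:=\bigcap_t\psi^Y_t(\ov U)$ is hyperbolic and, shrinking $\cU$, lies in a small neighbourhood of $\bLa_X$ on which the inverse of $u_Y$ extends continuously by structural stability, forcing $\widetilde\bLa_Y\subset u_Y(\bLa_X)=\bLa_Y$.

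The main technical obstacle will be the H\"older control of $Y\mapsto(\tau_Y,\pi_Y)$ as maps into $C^\be(\Om,\cdot)$: one must check that the composition $u_Y\circ\pi_X$ and the integrated reparametrization remain $C^\be$ in $\om$ with norms depending $C^{k-1}$ on $Y$. This is standard once the H\"older version \eqref{Fssa} of Theorem~\ref{Fss} is available, since $\pi_X$ is Lipschitz on the shift side and the flow of $X$ is $C^k$, so composition preserves the required regularity; the reparametrization is then given by a definite integral of $C^\be$ integrands depending $C^{k-1}$ on $Y$.
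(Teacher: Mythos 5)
Your construction of $\pi_Y$, $\tau_Y$ and the commuting diagram is essentially the same as the paper's: pass to an enlarged hyperbolic set admitting a Markov partition, apply Theorem~\ref{Fss} to get the conjugacy $u_Y$ and reparametrization $\ga_Y$, define $\pi_Y=u_Y\circ\pi_X$ and obtain $\tau_Y$ by integrating $\ga_Y^{-1}$ along the $\phi$-orbit. That much is correct and matches the paper.

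The gap is in the final step, which is in fact the bulk of the paper's proof. The inclusion $\bigcap_{t\in\re}\psi_t^Y(\ov U)\subset\bLa_Y$ is asserted by the sentence that the maximal invariant set lies near $\bLa_X$ ``on which the inverse of $u_Y$ extends continuously by structural stability, forcing $\widetilde\bLa_Y\subset u_Y(\bLa_X)$''. This is not an argument: Theorem~\ref{Fss} only provides a conjugacy on $\bLa_X$ itself and says nothing about the image $u_Y(\bLa_X)$ being the maximal invariant set of $\psi^Y$ in $\ov U$, and a continuous ``extension'' of $u_Y^{-1}$ to a neighbourhood, even if it existed, would not by itself force $\widetilde\bLa_Y\subset u_Y(\bLa_X)$. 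To make your route rigorous you would need to invoke a different theorem — structural stability for \emph{locally maximal} hyperbolic sets, which produces a near-identity topological equivalence $h_Y:\bLa_X\to\widetilde\bLa_Y$ \emph{onto} the new maximal invariant set — and then supply a uniqueness argument showing $h_Y=u_Y$, giving $\widetilde\bLa_Y=\bLa_Y$. You cite neither such a theorem nor the uniqueness step. The paper instead proves the inclusion directly and self-containedly: given $z\in\bigcap_t\psi^Y_t(\ov U)$, it forms a $\de_0$-possible $1$-specification inside $\La$ from nearby points $y_n$, shadows it by a genuine $\phi$-orbit $x\in\La_2\subset\bLa$ (Corollary~\ref{CSH}), and then uses uniform expansivity (Corollary~\ref{Rfe}) to conclude $z=\psi^Y_\xi(u_Y(x))\in\bLa_Y$. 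This is precisely why the uniform shadowing and expansivity machinery of Appendix~\ref{asha} is developed, and why the authors remark explicitly that their structural stability setup does not need local maximality. Whichever route you take, the inclusion needs a real argument; as written this is the missing piece.
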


        \begin{proof}\quad
     
     Let $\e_0>0$ be such that 
     $$
      B(\La,\e_0):=\{\,y\in M\;|\; d(y,x)<\e_0\,\}\subset V.
     $$
     
     Let $\cU_0\subset \fX^k(M)$ be the neighbourhood of $X$ given by 
     Theorem~\ref{Fss} and Corollary~\ref{Css}.
     
     Using Corollary~\ref{Rfe} with $\eta=1$ there exist an open set
     $\cU_1\subset\fX^k(M)$ with
      $X\subset\cU_1\subset\cU_0$ and $\e_1,\,\a\in\re$ such that 
      \begin{equation}\label{a<e1}
      0<\a<\e_1<\e_0\, ,
      \end{equation}
      $\La_1^Y:=  \bigcap_{t\in\re}\psi^Y_t(\ov{B(\La,\e_1)})$ is hyperbolic
      and
     if $Y\in\cU_1$, $z,w\in \La_1^Y$, 
     $\be\in C^0(\re,\re)$, $\be(0)=0$ 
     satisfy
     \begin{equation}\label{cfa}
     \forall t\in\re\qquad d(\psi^Y_{\be(t)}(w),\phi^Y_t(z))<\a,
     \end{equation}
     then $w=\psi^Y_\xi(z)$ for some $|\xi|<1$.

     Let 
     \begin{equation}\label{defde0}
      0<\de_0<\a
     \end{equation}
      be such that any $\de_0$ possible $1$-specification for
     $(\La,\phi_t)$ is $\tfrac 13 \a$-shadowed as in the Shadowing Corollary~\ref{CSH}.
      For a metric space $(B,d)$ and $f,\,g:A\to B$ write
     $$
     d_0(f,g):=\sup_{a\in A}d(f(a),f(b)).
     $$
     
     Let $\e_2$ be such that 
     \begin{equation}\label{e2a}
     0<\a<\e_2<\e_1.
     \end{equation}

     Let $0<\e_3<\e_2$ and $X\in \cU_2\subset\cU_1$ be such that
     \begin{gather}
     \Big(\sup_{t\in[0,1]}\Lip(\phi_t)\Big) \,\e_3 +
     \sup_{Y\in\cU_2}\sup_{t\in[0,1]}d_{0}(\phi_t,\psi^Y_t)
     <\frac{\de_0}3< \frac \a 3
     \label{lipesup}
     \qquad\text{and}
     \\
     \tfrac1 3 \de_0 + \e_3 < \tfrac 23 \de_0.
     \label{de0e1de0}
     \end{gather}
     
     Observe that by the choice of $\e_1$ in~\eqref{a<e1} and Corollary~\ref{Rfe} the set
     $$
     \La_1:=\textstyle
     \bigcap_{t\in\re}\phi_t(\ov{B(\La,\e_1)})
     $$
     is hyperbolic.
     Let $X\in\cU_3\subset\cU_2$ be a neighbourhood of $X$
     from Theorem~\ref{Fss} applied to $\La_1$  so that $u_Y$
     and $\ga_Y$ are defined on $\La_1$ for all $Y\in\cU_3$;
     and that $\cU_3$ is small enough so that
      \begin{equation}\label{d0ui}
     \forall Y\in\cU_3\qquad 
     d_0(u_Y|_{\La_1},id|_{\La_1})< \tfrac 13 \a.
     \end{equation}

     Define
     \begin{equation}\label{defla2}
     \La_2:=\textstyle\bigcap_{t\in\re}\phi_t(\ov{B(\La,\e_2)}).
     \end{equation}

     By the existence of Markov Partitions for the flow $\phi_\La$,
          see Fisher-Hasselblatt~\cite[6.6.5]{FH},
     there is a hyperbolic set $\bLa$ for $\phi_t$ such that 
     $\La\subset\La_2\subset\bLa\subset B(\La,\e_1)\subset V$ and with
     a Markov partition which is the image by a time preserving Lipschitz
     semiconjugacy $\pi$ of a suspension of a subshift of finite type 
     $\si:\Om\to\Om$
     with H\"older ceiling function $\tau:\Om\to\re^+$.
     \begin{equation*}
     \begin{CD}
     S(\Om,\tau) @> S_t >> S(\Om,\tau)
     \\
     @V \pi VV @VV \pi V
     \\
     \bLa @> \phi_t >> \bLa
     \end{CD}
     \end{equation*}
     
     Observe that the definition~5.1.1 of a hyperbolic set $\La$ (or $\La_2$) in Fisher-Hasselblatt~\cite{FH} 
     does not require that the local maximality of the set $\La$ which we may not have. 
     This was a long standing problem solved by Fisher. See section~6.7 page~371 of \cite{FH}.
     
     Since $\bLa$ is invariant by $\phi_t$ and $\bLa\subset B(\La,\e_1)$
     we have that $\bLa\subset\La_1$. In particular
     for all $Y\in\cU_3$ the functions $u_Y$ and $\ga_Y$
     are defined on $\La_1\supset\bLa$.
     For  $Y\in\cU_3$ let $\bLa_Y:=u_Y(\bLa)$ and 
     $\pi_Y:=u_Y\circ \pi|_{\Om\times\{0\}}\in C^\be(\Om\times\{0\},\bLa_Y)$.
     Since $u_Y:\bLa\to \bLa_Y$ is a (non time preserving) topological equivalence
     among the flows $(\bLa,\phi_t)$ and $(\bLa_Y,\psi^Y_s)$ we have that the
     following diagram commutes:
     $$
     \begin{CD}
     \Om\times\{0\} @> \si>> \Om\times\{0\}
     \\
     @V \pi VV @VV \pi V
     \\
     \bLa @> F >> \bLa
     \\
     @V u_Y VV @VV u_Y V
       \\
       \bLa_Y @> F_Y >> \bLa_Y
     \end{CD}
     $$
     where $F_Y:= u_Y\circ F\circ u_Y^{-1}$ and $F$ is the first return map
     to the Markov partition for $\bLa$, $F(\pi\ow)=\phi_{\tau(\ow)}(\pi\ow)$.
     
     Since $u_Y$ satisfies the equalities~\eqref{uypsiy} and~\eqref{sdga} 
     we have that
     \begin{align*}
     F_Y(\pi_Y\ow) &= \psi^Y_{\tau_Y(\ow)}(\pi_Y(\ow)), \quad \text{where}
     \\
     \tau_Y(\ow) :&=\int_0^{\tau(\ow)}\ga_Y(\phi_t(\pi \ow))^{-1}\,dt.
     \end{align*}
     Then the diagram~\eqref{SOmtauy} commutes.

     It remains to prove the inclusions in~\eqref{maxinu}.
     Since $Y\mapsto u_Y\in C^0(\La,M)$ is continuous,
     and $u_X(\bLa)=\bLa\subset V$,
     there is a neighbourhood $X\in \cU_4\subset\cU_3$ such that 
     $$
     \forall Y\in\cU_4\qquad \bLa_Y=u_Y(\bLa)\subset V.
     $$
     
     Let $U:=B(\La,\e_3)$.
     Given
     $
     z\in\textstyle \bigcap_{s\in\re}\psi^Y_s(\ov U)
     $
     let $z_n:=\psi^Y_n(z)$, $n\in\Z$.
     Since 
     \linebreak
     $z_n\in \ov U=\ov{B(\La,\e_3)}$ there is
     $y_n\in \La$ such that 
     $d(y_n,z_n)\le \e_3$. 
      Define a $1$-specification $f(t)$ 
      (cf. definition~\ref{B8}) for $(\La,\phi_t)$
      by 
      $$
      f(n+t):=\phi_t(y_n), \quad n\in\Z, \quad t\in[0,1[.
      $$
      Using~\eqref{lipesup}
      we have that for $t <1$,
      \begin{align}\label{fphit}
      d(f(n+t),\psi^Y_t(z_n))&=d(\phi_t(y_n),\psi^Y_t(z_n))
      \notag\\
      &\le d(\phi_t(y_n),\phi_t(z_n))+d(\phi_t(z_n),\psi^Y_t(z_n))
      \notag\\
      &\le  \Big(\sup_{t\in[0,1]}\Lip(\phi_t) \Big) \,d(z_n,y_n)+
      \sup_{t\in[0,1]}d_{0}(\phi_t,\psi^Y_t)
      \notag\\
      &\le \tfrac 13 \de_0.
      \end{align}
      Observe  that $f$ is $\de_0$-possible because,
      using that $\psi^Y_1(z_n)=z_{n+1}$ and~\eqref{de0e1de0}, we have
      that
      $$
      d(f(n+1^-),y_{n+1})\le d(f(n+1^-),\psi^Y_1(z_n))
      +d(z_{n+1},y_{n+1})
      \le \tfrac13 \de_0 +\e_3
      < \tfrac 23 \de_0.
      $$
      By the Shadowing Corollary~\ref{CSH} and the choice of $\de_0$ in~\eqref{defde0},
       there is  $x\in M$ and 
       an increasing homeomorphism $\be:(\re,0)\to(\re,0)$ 
        such that  
      \begin{equation}\label{dbef}
      \forall t\in\re\qquad d(\phi_{\be(t)}(x),f(t^\pm))<\tfrac 13 \a.
      \end{equation}
      Since by definition $f(t)\in\La$ and $\be(t)$ is a homeomorphism,
      and using~\eqref{e2a} and \eqref{defla2}
       we obtain  that
      \begin{equation}\label{xinsatu}
      x\in \textstyle\bigcap_{\be\in\re}\phi_\be\big(B(\La,\tfrac 13 \a)\big)
      \subset\bigcap_{t\in\re}\phi_t(\ov{B(\La,\e_2)})=\La_2\subset\bLa.
      \end{equation}
      
      Then by~\eqref{dbef}, ~\eqref{fphit} and \eqref{defde0},
      \begin{align}
      d(\phi_{\be(n+t)}(x),\psi^Y_{n+t}(z) )
      &=
      d(\phi_{\be(n+t)}(x),\psi^Y_t(z_n))
      \notag\\
      &\le d(\phi_{\be(n+t)}(x),f(n+t))+d(f(n+t),\psi^Y_t(z_n))
      \notag\\
      &\le \tfrac 13 \a+ \tfrac 13 \de_0\le \tfrac 23 \a.
      \label{le23a}
       \end{align}

      There is a homeomorphism $\si:(\re,0)\to(\re,0)$ such that
      \begin{equation}\label{uYsi}
      	\psi^Y_{\si(t)}(u_Y(x))=u_Y(\phi_{\be(t)}(x)).
      \end{equation}
      Indeed, comparing~\eqref{uYsi} and~\eqref{Fssa1} we get that
      $$
      \si(0)=0 \quad\text{and}\quad \dot\si(t)=\dot\be(t)\,\ga_Y(\phi_{\be(t)}(x))^{-1}.
      $$
      Therefore, using~\eqref{d0ui} and \eqref{le23a},
      \begin{align*}
      d(\psi^Y_{\si(t)}(u_Y(x)),\psi^Y_t(z))
      &= d(u_Y(\phi_{\be(t)}(x)),\psi^Y_t(z))
      \\
      &\le 
      d(u_Y(\phi_{\be(t)}(x)),\phi_{\be(t)}(x))
      +d(\phi_{\be(t)}(x),\psi^Y_t(z))
      \\
      &\le d_0(u_Y,id)+  \tfrac 23 \a <\a.
      \end{align*}
      
      By the choice of $\a$ in~\eqref{cfa}
       we have that there is $\xi\in\re$ such that
      $z=\psi^Y_\xi(u_Y(x))$.
      Since $\bLa_Y=u_Y(\bLa)$ is $\psi^Y_t$-invariant 
      and by~\eqref{xinsatu}, $x\in\bLa$,
      we obtain that $z\in\bLa_Y$. Therefore
      $$
      \textstyle\bigcap_{s\in\re}\psi^Y_s(\ov U)\subset\bLa_Y.
      $$

        \end{proof}

     \begin{Proposition}\label{liftmuSSLM}\quad
     
     Let $\phi\in\cU$ and $\La\subset U\subset V$ be from Proposition~\ref{PSSLM}.
     If $Y\in\cU$ and $\mu$ is a $\psi_t^Y$-invariant Borel probability with 
     $\supp\mu\subset \ov U$ then there is an $S_t$-invariant Borel probability
     on $S(\Om,\tau_Y)$ such that $(\pi_Y)_*(\nu)=\mu$.
     \end{Proposition}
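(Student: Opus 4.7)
The plan is to combine the standard lifting of Borel probabilities through a continuous surjection with Krylov--Bogolyubov averaging to produce an $S_t$-invariant lift. First I would locate $\supp\mu$: since $\mu$ is $\psi^Y_t$-invariant and $\supp\mu\subset\ov U$, invariance gives $\supp\mu\subset\bigcap_{t\in\re}\psi^Y_t(\ov U)\subset\bLa_Y$ by Proposition~\ref{PSSLM}. Hence $\mu$ may be regarded as a Borel probability carried by $\bLa_Y$, onto which $\pi_Y:S(\Om,\tau_Y)\to\bLa_Y$ is a continuous surjection (by the equivariance in diagram~\eqref{SOmtauy}, together with surjectivity of the symbolic factor $\pi$ onto $\bLa$ and the homeomorphism $u_Y:\bLa\to\bLa_Y$ supplied by Corollary~\ref{Css}).

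Next I would produce some (a priori non-invariant) lift $\nu_0\in\cP(S(\Om,\tau_Y))$ with $(\pi_Y)_*\nu_0=\mu$. A clean route is to approximate $\mu$ weakly by finitely supported measures $\mu_n=\sum_i a_i^{(n)}\de_{x_i^{(n)}}$, pick for each atom a preimage $\tx_i^{(n)}\in\pi_Y^{-1}(x_i^{(n)})$, form $\tilde\mu_n=\sum_i a_i^{(n)}\de_{\tx_i^{(n)}}$, and extract a weak${}^*$ accumulation point $\nu_0$; continuity of $\pi_Y$ then gives $(\pi_Y)_*\nu_0=\mu$. Equivalently one can invoke disintegration or a Borel selection on the compact metric target.

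Then I would average against the suspended flow. The set
$$
\cM_\mu:=\{\,\nu\in\cP(S(\Om,\tau_Y))\mid (\pi_Y)_*\nu=\mu\,\}
$$
is convex, weak${}^*$-closed, non-empty, and stable under $(S_t)_*$ for every $t\in\re$, because $\pi_Y\circ S_t=\psi^Y_t\circ\pi_Y$ together with $(\psi^Y_t)_*\mu=\mu$ yields $(\pi_Y)_*(S_t)_*\nu=(\psi^Y_t)_*(\pi_Y)_*\nu=\mu$ for any $\nu\in\cM_\mu$. Setting
$$
\nu_T:=\frac{1}{T}\int_0^T (S_t)_*\nu_0\,dt\in\cM_\mu
$$
and letting $\nu$ be any weak${}^*$ accumulation point as $T\to\infty$, the usual Krylov--Bogolyubov estimate $(S_s)_*\nu_T-\nu_T=\tfrac1T\int_T^{T+s}(S_t)_*\nu_0\,dt-\tfrac1T\int_0^s(S_t)_*\nu_0\,dt\to 0$ shows that $\nu$ is $S_t$-invariant, and weak${}^*$-closedness of $\cM_\mu$ gives $(\pi_Y)_*\nu=\mu$.

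I do not anticipate a real obstacle here: the only step requiring attention is the stability of $\cM_\mu$ under $(S_t)_*$, which is immediate from the time-preserving equivariance guaranteed by Proposition~\ref{PSSLM} and the $\psi^Y_t$-invariance of $\mu$. Everything else is routine compactness and averaging of Borel probabilities on compact metric spaces.
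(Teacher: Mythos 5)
Your argument is correct, and the averaging step is exactly the paper's: form Ces\`aro averages $\frac{1}{T}\int_0^T(S_t)_*(\cdot)\,dt$ of an initial lift, extract a weak${}^*$ accumulation point, and use the time-preserving semiconjugacy $\pi_Y\circ S_t=\psi^Y_t\circ\pi_Y$ together with $(\psi^Y_t)_*\mu=\mu$ to see that the pushforward stays equal to $\mu$ along the way. The one place you diverge is in constructing the initial (non-invariant) lift. The paper defines a positive linear functional $G(f\circ\pi_Y):=\int f\,d\mu$ on the subspace $\{f\circ\pi_Y:f\in C^0(\La_Y,\re)\}\subset C^0(S(\Om,\tau_Y),\re)$, extends it by the Riesz (positive Hahn--Banach) extension theorem, and reads off a Borel probability $\beta$ with $(\pi_Y)_*\beta=\mu$; no choice of pointwise preimages is needed. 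You instead approximate $\mu$ weakly by finitely supported measures carried on $\supp\mu\subset\bLa_Y$, lift atom by atom through the surjection $\pi_Y$, and pass to a weak${}^*$ limit $\nu_0$. Both are standard and work here: the Riesz-extension route is slightly slicker in that it sidesteps selecting preimages, while your approximation argument is more hands-on and makes the existence of a lift transparent. Either way, the heart of the proof is the Krylov--Bogolyubov averaging, which you reproduce exactly.
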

     
     \begin{proof}
     By \eqref{maxinu} in Proposition~\ref{PSSLM} $\supp(\mu)\subset \La_Y$.
     For $f\in C^0(\La_Y,\re)$ let  $G(f\circ\pi_Y)=\int f\,d\mu$.
     Then $G$ defines a positive linear functional on a subspace $W$
     of $C^0(S(\Om,\tau_Y),\re)$. By the Riesz extension theorem, 
     $G$ extends to a positive linear functional on $C^0(S(\Om,\tau_Y),\re)$.
     Since $G(1)=G(1\circ \pi)=1$, the extension $G$ corresponds to a Borel probability 
     $\be$ 
     on $S(\Om,\tau_Y)$. 
     By the construction $(\pi_Y)_*\be=\mu$.
     By compactness we can choose a sequence 
     $T_k\to\infty$ such that 
     $ \nu=\lim_k\tfrac 1{T_k}\int_0^{T_k}(S_t)_*\be\, dt$
     exists. We have that $\nu$ is $S_t$-invariant and using that
     $$
     (\pi_Y)_* (S_t)_*\be =(\psi^Y_t)_* (\pi_Y)_*\be=(\psi^Y_t)_* \mu=\mu,
     $$
     we get that $(\pi_Y)_*\nu=\mu$.
     
     \end{proof}

    \pagebreak

      \section{Stability of hyperbolic Ma\~n\'e sets.}\label{ashms}

      Given a Tonelli lagrangian $L:TM\to\re$ let $E_L:=v\cdot L_v-L$
      be its energy function and define
      $$
      e_0(L):= \inf\{\, k\in\re\;|\; \pi(E_L^{-1}\{k\})=M\,\}.
      $$
      Observe that
      $$
      \tfrac{d\,}{dt}E_L(x,tv)=v\cdot L_{vv}(x,tv)\cdot v>0 \qquad\text{if }v\ne 0.
      $$
      Then $f(t):=E_L(x,tv)$ is increasing on $t>0$.
      This implies that for $k>e_0(L)$ the radial projection is a diffeomorphism
      between the unit tangent bundle and the energy level $E^{-1}_L\{k\}$.

      Denote by  $O_M$ the zero section of $TM$.
      
      \begin{Lemma}\label{LSH1}\quad
      
      If $E^{-1}\{c(L)\}\cap O_M\ne \emptyset$ then 
      $c(L)=e_0(L)$ and $E^{-1}_L(e_0)\cap O_M\subset \cA(L)\subset\mN(L)$.
      \end{Lemma}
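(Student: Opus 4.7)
The plan is to compute $e_0(L)$ explicitly, sandwich $c(L)$ between $e_0(L)$ and itself, and then exhibit a constant static curve through each zero velocity vector in the critical energy level.

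First, I would compute $e_0(L)$. The observation made just before the lemma shows that for fixed $x$, the map $t\mapsto E_L(x,tv)$ is strictly increasing for $t>0$, so the minimum of $E_L$ on the fiber $T_xM$ is attained at $v=0$ with value $E_L(x,0)=-L(x,0)$. Consequently
\[
\pi(E_L^{-1}\{k\})=\{x\in M\;|\;-L(x,0)\le k\},
\]
so $\pi(E_L^{-1}\{k\})=M$ iff $k\ge -\min_x L(x,0)$, giving $e_0(L)=-\min_x L(x,0)=\max_x(-L(x,0))$.

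Second, I would show the two inequalities that pin down $c(L)$. For the lower bound $c(L)\ge e_0(L)$: given any $x\in M$ and $k<-L(x,0)$, the constant curve $\gamma\equiv x$ on $[0,T]$ satisfies $\int_0^T (L+k)(\gamma,\dot\gamma)\,dt = (L(x,0)+k)T\to-\infty$ as $T\to\infty$, hence $\Phi_k(x,x)=-\infty$, so $k\le c(L)$ by~\eqref{defcL}. Letting $k\uparrow -L(x,0)$ and then maximizing over $x$ yields $c(L)\ge e_0(L)$. For the upper bound, the hypothesis $E^{-1}\{c(L)\}\cap O_M\ne\emptyset$ provides $x_0$ with $-L(x_0,0)=c(L)$, which gives $c(L)\le\max_x(-L(x,0))=e_0(L)$. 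Combining the two, $c(L)=e_0(L)$.

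Third, I would show every zero velocity vector in the critical energy level is static. Let $(x,0)\in E^{-1}\{e_0(L)\}\cap O_M$. Then $L(x,0)=-c(L)=\min_y L(y,0)$, so $\partial_x L(x,0)=0$, and the constant curve $\gamma(t)\equiv x$ solves the Euler--Lagrange equation with $\dot\gamma\equiv 0$, so in particular $(x,0)$ is a fixed point of $\vr_t$. The action balance
\[
\int_s^t c(L)+L(\gamma,\dot\gamma)\,d\tau = (L(x,0)+c(L))(t-s)=0
\]
together with $\Phi_{c(L)}(x,x)\le 0$ (test with the constant curve) and $\Phi_{c(L)}(x,x)\ge 0$ (from subadditivity $\Phi_{c(L)}(x,x)\le 2\Phi_{c(L)}(x,x)$, using that $\Phi_{c(L)}$ is finite at the critical value) gives $\Phi_{c(L)}(x,x)=0$. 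Thus $\gamma$ is both semi-static and satisfies~\eqref{aubry}, hence static, and so $(x,0)\in\cA(L)\subset\mN(L)$.

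There is no hard step here; the only subtlety to get right is verifying $\Phi_{c(L)}(x,x)\ge 0$ (needed for the semi-static equality to be an equality rather than an inequality), which follows cleanly from the triangle inequality for $\Phi_{c(L)}$ applied to the loop $x\to x\to x$. Everything else is a direct unfolding of definitions together with the radial monotonicity of $E_L$ already recorded in the excerpt.
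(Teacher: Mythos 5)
Your proof is correct, and the overall shape matches the paper's: compute $e_0(L)=\max_x(-L(x,0))$ from the radial monotonicity of $E_L$, sandwich $c(L)$ between $e_0(L)$ and itself, and verify directly that the constant curve at a critical zero vector is static by showing $\Phi_{c(L)}(x_0,x_0)=0$. The one genuinely different step is the lower bound $c(L)\ge e_0(L)$. The paper obtains it by recalling that the forward semi-static vectors $\Sigma^+(L)$ satisfy $E(\Sigma^+(L))=\{c(L)\}$ and invoking the Covering Property $\pi(\Sigma^+(L))=M$ (Ma\~n\'e, Theorem~VII), so that the energy level $c(L)$ already projects onto all of $M$. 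You instead go back to the raw definition~\eqref{defcL}: for every $x$ and every $k<-L(x,0)$ the constant loop at $x$ has $(L+k)$-action $(L(x,0)+k)\,T\to-\infty$, so $\Phi_k(x,x)=-\infty$ and $k\le c(L)$. Your route is more elementary and self-contained (no Covering Property needed), at the cost of being slightly longer; the paper's is a one-line consequence of machinery already on the table. For the static part, the paper packages the verification through the auxiliary function $F=L+E_L$ and the inequality~\eqref{intFcA} for closed curves at critical energy, whereas you verify $\Phi_{c(L)}(x_0,x_0)=0$ directly from the triangle inequality and the constant test curve; both reduce to the same identity and are equally valid.
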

      \begin{proof}\quad
      
      Let $$
      \Si^+(L):=\big\{v\in TM\;\big|\; \pi\circ \phi_t(v)|_{[0,+\infty[}\text{ is semi-static }\big\}.
      $$
      We have that $E(\Si^+(L))=\{c(L)\}$ (Ma\~n\'e~\cite[p.~146]{Ma7}).
      By the Covering Property (Ma\~n\'e~\cite[Theorem~VII]{Ma7} 
      also~\cite[Th.~VII]{CDI}), $\pi(\Si^+(L))=M$. Therefore $c(L)\ge e_0(L)$ and
      $$
      E^{-1}\{c(L)\}\cap O_M\ne \emptyset \qquad\then\qquad
      c(L)=e_0(L).
      $$
      Let 
      $$
      F(x,v):=L(x,v)+E_L(x,v)=v\cdot L_v(x,v).
      $$
      If $\ga:[0,T]\to M$ is a closed curve with energy $c(L)$ we have that
      \begin{align}\label{intFcA}
      \int_0^T F(\ga,\dga)\,dt =
      \int_0^T \big(L(\ga,\dga)+c(L)\big)\,dt
     \ge A_{L+c}(\ga|_{[0,t]})+\Phi_c(\ga(t),\ga(0))\ge 0
     \quad \forall t\in]0,T[.
      \end{align}
      Suppose that $(x_0,0)\in E^{-1}_L\{c(L)\}\cap O_M\ne \emptyset$.
      Let $\eta(t)\equiv x_0$ for $t\in[0,2]$.
      Then $E_L(\eta,\deta)=E_L(x_0,0)=c(L)$,
      $F(x_0,0)=0$ and $\int_0^2F(\eta,\deta)\, dt =0$.
      From~\eqref{intFcA} we obtain that $\eta$ is static and hence
      $(\ga,\dga)=(x_0,0)\in\cA(L)$.

     \end{proof}
     
     \begin{Corollary}\label{CSH1}\quad 
     
     If $\mN(L)$ does not contain fixed points of the lagrangian flow
     then $c(L)>e_0(L)$ and the energy level 
     $E_L^{-1}\{c(L)\}$ is diffeomorphic to the unit 
     tangent bundle $SM$ under the radial projection. 
     \end{Corollary}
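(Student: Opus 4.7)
The plan is to deduce Corollary~\ref{CSH1} directly from Lemma~\ref{LSH1} together with the observation that fixed points of the Lagrangian flow are exactly points in the zero section $O_M$. The Euler-Lagrange equation $\dot x=v$ forces any fixed point $(x,v)$ to satisfy $v=0$, so ``$\mN(L)$ contains no fixed point of the flow'' is equivalent to $\mN(L)\cap O_M=\emptyset$. Applying the contrapositive of Lemma~\ref{LSH1}, which asserts $E_L^{-1}\{c(L)\}\cap O_M\subset\cA(L)\subset\mN(L)$, we immediately get $E_L^{-1}\{c(L)\}\cap O_M=\emptyset$.

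Next I would argue $c(L)>e_0(L)$. Recall $c(L)\ge e_0(L)$ always (this appears inside the proof of Lemma~\ref{LSH1}, using the Covering Property $\pi(\Sigma^+(L))=M$). Since $M$ is compact and $x\mapsto E_L(x,0)$ is continuous, the value $e_0(L)$ coincides with $\max_{x\in M}E_L(x,0)$, and this maximum is attained at some $x_0\in M$, giving $(x_0,0)\in E_L^{-1}\{e_0(L)\}\cap O_M$. Thus if the equality $c(L)=e_0(L)$ held, the intersection $E_L^{-1}\{c(L)\}\cap O_M$ would be nonempty, contradicting what we just proved. Hence $c(L)>e_0(L)$.

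Finally, to produce the diffeomorphism with $SM$, I would use the fibrewise strict monotonicity of the energy already noted in the paper, namely
\[
\tfrac{d}{dt}E_L(x,tv)=tv\cdot\partial_{vv}L(x,tv)\cdot v\ge a\,t\,|v|_x^2>0\qquad(t>0,\ v\ne 0),
\]
together with superlinearity, which makes $t\mapsto E_L(x,tv)$ strictly increasing from $E_L(x,0)\le e_0(L)<c(L)$ to $+\infty$. For each $(x,u)\in SM$ there is therefore a unique $\rho(x,u)>0$ with $E_L(x,\rho(x,u)\,u)=c(L)$, and the implicit function theorem (applied to $E_L(x,tu)-c(L)=0$, whose $t$-derivative is strictly positive) shows $\rho$ is as smooth as $L$. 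Then $(x,u)\mapsto(x,\rho(x,u)\,u)$ is a $C^{k-1}$ diffeomorphism $SM\to E_L^{-1}\{c(L)\}$, whose inverse $(x,v)\mapsto(x,v/|v|_x)$ is the radial projection.

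The proof is essentially bookkeeping around Lemma~\ref{LSH1}; the only slightly delicate point is identifying ``fixed point of the Lagrangian flow'' with ``point of $O_M$'' via the Euler-Lagrange equation, and checking that $c(L)=e_0(L)$ actually forces $E_L^{-1}\{c(L)\}\cap O_M\ne\emptyset$ (via compactness of $M$). Both are immediate, so no step should present a real obstacle.
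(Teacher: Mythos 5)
Your overall strategy --- combining Lemma~\ref{LSH1} with the remark made just before it that $t\mapsto E_L(x,tv)$ is strictly increasing, so that for $k>e_0(L)$ the radial projection is a diffeomorphism onto $E_L^{-1}\{k\}$ --- is exactly what the paper intends, and your argument that $c(L)=e_0(L)$ would force $E_L^{-1}\{c(L)\}\cap O_M\ne\emptyset$ (compactness of $M$ and continuity of $x\mapsto E_L(x,0)$ give $e_0(L)=\max_xE_L(x,0)$ attained) is correct, as is the implicit-function-theorem construction of the diffeomorphism.

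There is, however, a genuine gap at the very start. You assert that ``fixed points of the Lagrangian flow are exactly points in the zero section $O_M$'' and deduce from $\dot x=v$ alone the \emph{equivalence} of ``$\mN(L)$ contains no fixed point'' with ``$\mN(L)\cap O_M=\emptyset$''. The Euler--Lagrange equation only gives the inclusion: every fixed point lies in $O_M$, so $\mN(L)\cap O_M=\emptyset$ implies absence of fixed points. The converse --- which is the direction you actually use in the contrapositive argument --- does \emph{not} follow from $\dot x=v$: a point $(x_0,0)$ is a rest point of the Lagrangian vector field only when $\partial_xL(x_0,0)=0$, and in general $O_M$ contains many non-fixed points. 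What \emph{is} true, and what the proof of Lemma~\ref{LSH1} actually establishes, is that if $(x_0,0)\in E_L^{-1}\{c(L)\}\cap O_M$ then the constant curve $\eta(t)\equiv x_0$ is static; a static curve is a Tonelli minimizer on every compact interval, hence a $C^2$ solution of the Euler--Lagrange equation, and by uniqueness of solutions this forces the flow orbit through $(x_0,0)$ to be the constant orbit, i.e.\ $(x_0,0)$ is a fixed point. You should therefore replace your ``$\dot x=v$'' justification by this consequence of Lemma~\ref{LSH1}'s proof; with that insertion, the rest of your argument is sound and coincides with the paper's approach.
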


     \medskip

     \begin{Theorem}\label{SDL}\quad
     
     Suppose that $L:TM\to\re$ is a Tonelli lagrangian and that 
     its Ma\~n\'e set $\mN(L)$ is hyperbolic without fixed points. 
     Let $V$ be an open set with $\mN(L)\subset V$.
     Then there is a subshift
     of finite type $\si:\Om\to\Om$ and there are
     open sets $\mN(L)\subset U\subset V$ and $0\in\cU\subset C^2(M,\re)$
     and continuous maps 
     $C^2(M,\re)\supset\cU\ni \phi\mapsto \tau_\phi\in C^0(\Om,\re^+)$ and
     $C^2(M,\re)\supset\cU\ni \phi\mapsto \pi_\phi\in C^0(S(\Om,\tau_\phi),TM)$, 
     where $\big(S(\Om,\tau_\phi),S_t\big)$ is the suspension flow of $\si$ 
     with ceiling function $\tau_\phi$,
     and there are hyperbolic sets $\La_\phi=\pi_\phi(S(\Om,\tau_\phi))$ for the flow
     $\vr^{L+\phi}_t$ of $L+\phi$ restricted to the energy level 
     $E_{L+\phi}^{-1}\{c(L+\phi)\}$ such that
     $$
     \forall \phi\in\cU\qquad
     \mN(L+\phi)\subset 
     \textstyle\bigcap_{t\in\re}\vr^{L+\phi}_t(U)\subset \La_\phi \subset V
     $$
     and the following diagram commutes for all $t\in\re$:
     $$
     \begin{CD}
     S(\Om,\tau_\phi) @> S_t>> S(\Om,\tau_\phi)
     \\
     @V \pi_\phi VV @VV \pi_\phi V
     \\
     \La_\phi @> \vr^{L+\phi}_t >> \La_\phi
     \end{CD}
     $$
     
     Moreover any invariant measure $\mu$ for $L+\phi$ with $\supp(\mu)\subset U$
     lifts to an invariant measure $\nu$ on $S(\Om,\tau_\phi)$ 
     with $(\pi_\phi)_*\,\nu=\mu$. In particular $\mu$ is a 
     minimizing measure for 
     $L+\phi$ iff it is the projection of an invariant probability $\nu$ on 
     $S(\Om,\tau_\phi)$ which minimizes the integral of the function 
     $A_\phi:=(L+\phi)\circ\pi_\phi$.
    
     \end{Theorem}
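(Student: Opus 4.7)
The plan is to reduce this to the structural stability result for hyperbolic flows on a fixed manifold, namely Proposition~\ref{PSSLM}, by using the radial projection to identify the varying energy levels $\E_{L+\phi}$ with the fixed unit tangent bundle $SM$. Since by hypothesis $\mN(L)$ contains no fixed points of the Lagrangian flow, Corollary~\ref{CSH1} gives $c(L)>e_0(L)$ and the radial projection $R_L:\E_L\to SM$ is a diffeomorphism. By continuity of $\phi\mapsto c(L+\phi)$ (cf.~\cite[Lemma~5.1]{CP}) and of $e_0$, there is a $C^2$ neighbourhood $\cU_0$ of $0$ on which $c(L+\phi)>e_0(L+\phi)$, so $R_{L+\phi}:\E_{L+\phi}\to SM$ remains a diffeomorphism.

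Let $X_\phi$ denote the pushforward under $R_{L+\phi}$ of the Lagrangian vector field of $L+\phi$ restricted to $\E_{L+\phi}$. By Remark~\ref{remell} the map $\cU_0\ni\phi\mapsto X_\phi\in\fX^1(SM)$ is continuous in the $C^1$ topology. The image $R_L(\mN(L))$ is a hyperbolic set for $X_0$. Fix an open $W\subset SM$ with $R_L(\mN(L))\subset W\subset\ov W\subset R_L(V\cap\E_L)$ (using $R_L(V\cap\E_L)$ open in $SM$). Apply Proposition~\ref{PSSLM} to $X_0$ and its hyperbolic invariant set $R_L(\mN(L))$ with the neighbourhood $R_L(V\cap\E_L)$: it yields a subshift of finite type $\si:\Om\to\Om$, an open $U_0\subset SM$ with $R_L(\mN(L))\subset U_0\subset\ov{U_0}\subset R_L(V\cap\E_L)$, a $C^1$ neighbourhood $\cV\subset\fX^1(SM)$ of $X_0$, and continuous assignments $Y\mapsto\tau_Y$, $Y\mapsto\pi_Y$ producing the semiconjugacy onto hyperbolic sets $\bLa_Y\supset\bigcap_{t\in\re}\psi^Y_t(\ov{U_0})$ lying in $R_L(V\cap\E_L)$.

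Shrink $\cU_0$ to an open $\cU\subset\cU_0$ such that $X_\phi\in\cV$ for all $\phi\in\cU$, and such that the upper semicontinuity of the Ma\~n\'e set \cite[Lemma~5.2]{CP} yields $\mN(L+\phi)\subset R_{L+\phi}^{-1}(U_0)$ for every $\phi\in\cU$. Define $U:=\bigcup_{\phi\in\cU}R_{L+\phi}^{-1}(U_0)$ (or equivalently work on $SM$ and pull back), $\tau_\phi:=\tau_{X_\phi}$, $\pi_\phi:=R_{L+\phi}^{-1}\circ\pi_{X_\phi}$, and $\La_\phi:=R_{L+\phi}^{-1}(\bLa_{X_\phi})$. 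The commuting diagram then follows from the one in Proposition~\ref{PSSLM} and the fact that $R_{L+\phi}$ conjugates $\vr^{L+\phi}_t|_{\E_{L+\phi}}$ to $\psi^{X_\phi}_t$. The inclusions $\mN(L+\phi)\subset\bigcap_t\vr^{L+\phi}_t(U)\subset\La_\phi\subset V$ follow from invariance of $\mN(L+\phi)$, from~\eqref{maxinu}, and from $\bLa_{X_\phi}\subset R_L(V\cap\E_L)$ together with the choice of $U_0$.

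For the statement on measures, Proposition~\ref{liftmuSSLM} gives the lifting of any $\vr^{L+\phi}_t$-invariant Borel probability supported in $\ov U$ to an $S_t$-invariant probability $\nu$ on $S(\Om,\tau_\phi)$ with $(\pi_\phi)_*\nu=\mu$. By Corollary~\ref{cecmm} every minimizing measure $\mu$ for $L+\phi$ satisfies $\supp\mu\subset\mN(L+\phi)\subset U$, so it lifts; and conversely the characterisation~\eqref{minmeas} of $c(L+\phi)$ shows $\mu$ is minimizing iff its lift minimizes $\int A_\phi\,d\nu=\int(L+\phi)\,d\mu$ among $S_t$-invariant probabilities on $S(\Om,\tau_\phi)$ whose projection is an invariant measure on $\La_\phi$. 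The main obstacle is the first technical passage from varying energy levels to the fixed manifold $SM$: one needs the $C^1$ continuity of $\phi\mapsto X_\phi$, which is not automatic from continuity of $c(L+\phi)$ alone and is precisely what is established in Appendices~\ref{asst}--\ref{ashms} and summarised in Remark~\ref{remell}. Once this continuity is in hand, Proposition~\ref{PSSLM} delivers the conclusion essentially for free.
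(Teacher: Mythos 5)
Your proposal is correct and follows essentially the same route as the paper: identify the varying critical energy levels with $SM$ by radial projection (Corollary~\ref{CSH1} and continuity of $\phi\mapsto c(L+\phi)$), invoke Remark~\ref{remell} for the $C^1$ continuity of the pushed-forward Lagrangian vector fields, apply Proposition~\ref{PSSLM} on the fixed manifold $SM$ to get the subshift and semiconjugacy, pull everything back through the radial projections, and finish the measure statement with Proposition~\ref{liftmuSSLM} together with Corollary~\ref{cecmm} and~\eqref{minmeas}. The only cosmetic difference is that you realise $U$ as a union of preimages $R_{L+\phi}^{-1}(U_0)$ over varying energy levels, whereas the paper simply works with a neighbourhood in $SM$ after the identification; both are the same argument.
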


      \begin{proof}\quad
   
     By Lemma~5.1 in~\cite{CP} for all $\ell\ge 2$ the map 
     $C^\ell(M,\re)\ni\phi\mapsto c(L+\phi)\in\re$ is continuous.
     And by Lemma~5.2 in~\cite{CP} the map
     $C^\ell(M,\re)\ni\phi\mapsto \mN(L+\phi)$ is upper semicontinuous.
          By Corollary~\ref{CSH1} for $\phi\in\cU$ small enough we can 
     identify the energy levels 
     $E^{-1}_{L+\phi}\{c(L+\phi)\}\approx E^{-1}_{L}\{c(L)\}\approx SM$
     with the unit tangent bundle $SM$ 
     and consider their lagrangian flows as perturbations of the
     flow of $L$ on the same manifold $SM$.
     
     Let $\I:=]c(L)-\e,c(L)+\e[$ with $\e>0$ small.
     Let $P_{\phi,k}:SM \to E^{-1}_{L+\phi}\{k\}$
     be the radial projection and let $X_\phi$ be the Lagrangian 
     vector field for $L+\phi$. Let $\fX^1(SM)$ be the
     vector space of  $C^1$ vector fields on $SM$.
     The map $\X:C^\ell(M,\re)\times \I\to \fX^1(SM)$,
     $(\phi,k)\mapsto (dP_{\phi,k})^{-1}\circ X_{\phi}\circ P_{\phi,k}$
     is $C^{\ell-2}$ in a neighbourhood of $(\phi,k)=(0,c(L))$. 
     If we compose this map with the continuous function 
     $\phi\mapsto k=c(L+\phi)$ we obtain a continuous map 
     $C^2(M,\re) \to \fX^1(SM)$,
     $\phi\mapsto \X(\phi,c(L+\phi))$.
     The flow of 
     this vector field is   
     \linebreak
     $\psi^\phi_t:=P_\phi^{-1}\circ \vr^{L+\phi}_t\circ P_\phi$,
     where $P_\phi:=P_{\phi,c(L+\phi)}$,
     which is smoothly conjugate to the lagrangian flow of $L+\phi$ on
     $E^{-1}_{L+\phi}\{c(L+\phi)\}$, and $\phi\mapsto \psi^\phi_t$ is a
     continuous family of $C^1$ flows on $SM$.
     Then there are neighbourhoods $\cU$ of $0$ and $U\subset V$ of $\mN(L)$
     in $SM$ such that for any $\phi\in\cU$ the set 
     $\bigcap_{t\in\re}\psi^{\phi}_t(\ov U)$ is hyperbolic and
     $P_\phi^{-1}(\mN(L+\phi))\subset U$, using the 
     upper semicontinuity of $\phi\mapsto \mN(L+\phi)$.

     Applying Proposition~\ref{PSSLM} and Proposition~\ref{liftmuSSLM},
     shrinking $\cU$ and $U$ if necessary, we obtain Proposition~\ref{SDL}.
     
     \end{proof}

    \begin{Remark}\label{remell}
    \quad

    In the proof of Theorem~\ref{SDL} the map 
    $\X:C^\ell(M,\re)\times \I\to \fX^{\ell-1}(SM)$
    is conitnuous and then the map 
    $C^\ell(M,\re)\to \fX^{\ell-1}(SM)$, 
    $\phi\mapsto \X(\phi,c(L+\phi))$ is 
    continuous in the $C^{\ell-1}$ topology
    for $\fX^{\ell-1}(SM)$.

    \end{Remark}


\def\cprime{$'$} \def\cprime{$'$} \def\cprime{$'$} \def\cprime{$'$}
\providecommand{\bysame}{\leavevmode\hbox to3em{\hrulefill}\thinspace}
\providecommand{\MR}{\relax\ifhmode\unskip\space\fi MR }
\providecommand{\MRhref}[2]{%
  \href{http://www.ams.org/mathscinet-getitem?mr=#1}{#2}
}
\providecommand{\href}[2]{#2}

\end{document}